\theoremstyle{plain} % definition
\newtheorem{lemma}[equation]{Lemma}
\newtheorem{proposition}[equation]{Proposition}
\newtheorem{theorem}[equation]{Theorem}
\newtheorem{corollary}[equation]{Corollary}
\newtheorem{cww}[equation]{Chang-Wilson-Wolff Inequality }
\newtheorem{2cww}[equation]{Two Parameter Chang-Wilson-Wolff Inequality }
\newtheorem*{roth}{K.~Roth's Theorem}
\newtheorem*{schmidt}{W.~Schmidt's Theorem}
\newtheorem*{jn}{The John-Nirenberg Estimates}
\theoremstyle{definition}
\newtheorem{definition}[equation]{Definition}
\theoremstyle{remark}
\newtheorem{remarks}[equation]{Remarks}
\newtheorem{remark}[equation]{Remark}
\numberwithin{equation}{section}
\def\norm#1.#2.{\lVert#1\rVert_{#2}}
\def\Norm#1.#2.{\bigl\lVert#1\bigr\rVert_{#2}}
\def\NOrm#1.#2.{\Bigl\lVert#1\Bigr\rVert_{#2}}
\def\NORm#1.#2.{\biggl\lVert#1\biggr\rVert_{#2}}
\def\NORM#1.#2.{\Biggl\lVert#1\Biggr\rVert_{#2}}
\def\ip#1,#2,{\langle #1,#2\rangle}
\def\Ip#1,#2,{\bigl\langle#1,#2\bigr\rangle}
\def\IP#1,#2,{\Bigl\langle#1,#2\Bigr\rangle}
\def\abs#1{\lvert#1\rvert}
\def\ABs#1{\biggl\lvert#1\biggr\rvert}
\def\XXint#1#2#3{{\setbox0=\hbox{$#1{#2#3}{\int}$}
     \vcenter{\hbox{$#2#3$}}\kern-.5\wd0}}
\def\rev{\textnormal {rev}}
\def\d{\textnormal {d}}
\begin{document}
%%%%%%%%%%%%%%%%%%%%%%%%%%%%%  Title
\title[Discrepancy Function in Two Dimensions]{Exponential Squared Integrability of the \\
Discrepancy Function in Two Dimensions}
% \subjclass[2000]{Primary: 42B20 Secondary: 42B25, 42B35}
% \keywords{}

\thanks{All authors are grateful to the Fields Institute for hospitality and support, and to
the National Science Foundation for support.}

\author[D. Bilyk]{Dmitriy Bilyk }
\address {School of Mathematics, Institute for Advanced Study, Princeton, NJ 08540, USA.}
\email{bilyk@math.ias.edu}

\author[M.T. Lacey]{Michael T. Lacey}
\address{ School of Mathematics, Georgia Institute of Technology, Atlanta GA 30332, USA.}
\email{lacey@math.gatech.edu}

\author[I. Parissis]{Ioannis Parissis}
\address{Institutionen f\"or Matematik, Kungliga Tekniska H\"ogskolan, SE 100 44, Stockholm, SWEDEN.}
\email{ioannis.parissis@gmail.com}

\author[A. Vagharshakyan]{Armen Vagharshakyan }
\address{ School of Mathematics, Georgia Institute of Technology, Atlanta GA 30332, USA.}
\email{armenv@math.gatech.edu}

\begin{abstract}
Let  $\mathcal A_N$ be an $N$-point set in the unit square and
consider the Discrepancy function
\begin{equation*}
D_N( \vec x) \coloneqq \sharp \big(\mathcal A_N \cap [\vec 0,\vec
x)\big) -N \abs{ [\vec 0,\vec x)},
\end{equation*}
where $ \vec x= (x_1 , x_2)\in [0,1]^2$, $[ 0,\vec x)=\prod _{t=1}
^{2} [0,x_t)$, and $ \lvert[\vec 0,\vec x)  \rvert $ denotes the
Lebesgue measure of the rectangle. We give various refinements of a well-known 
result of \cite{MR0319933} on the $ L ^{\infty } $ norm of $ D_N$.  We show that necessarily
\begin{equation*}
\norm D_N. \operatorname {exp} (L ^{\alpha }). \gtrsim (\log N) ^{
1-1/ \alpha }\,, \qquad 2\le \alpha <\infty \,.
\end{equation*}
The case of $ \alpha = \infty $ is the Theorem of Schmidt. This estimate is sharp.
For the digit-scrambled van der Corput sequence, we have
\begin{equation*}
\norm D_N. \operatorname {exp} (L ^{\alpha }). \lesssim  (\log N) ^{
1-1/ \alpha }\,, \qquad 2\le \alpha <\infty \, ,
\end{equation*}
whenever $N=2^n$ for some positive integer $n$.
This estimate depends upon variants of the Chang-Wilson-Wolff
inequality \cite{MR800004}. We also provide similar estimates for
the $BMO$ norm of $D_N$.
\end{abstract}

\maketitle

%%%%%%%%%%%%%%%%%%%%%%%%%%%%%% SECTION  SECTION SECTION
%%%%%%%%%%%%%%%%%%%%%%%%%%%%%% SECTION  SECTION SECTION
\section{Main Theorems} %\label{s.}

 The common theme of the subject of irregularities of distribution is to
 show that, no matter how $N$ points are selected, their distribution must be far from uniform.
In the present article, we are primarily interested in the precise
behavior of such estimates near the $ L ^{\infty }$ endpoint,
phrased in terms of exponential Orlicz classes. We restrict our
attention to the two-dimensional case.

Let $ \mathcal A_N \subset [0,1] ^{2}$ be a set of  $N$ points in
the unit square. For $\vec x  = (x_1, x_2)\in [0,1]^2$, we define
the \emph{Discrepancy function} associated to $ \mathcal A_N$ as
follows:
\begin{equation*}
D_N (\vec x) \coloneqq \sharp \bigl(\mathcal A_N \cap [0, \vec x)\bigr) - N \abs{ [0, \vec x)}\,,
\end{equation*}
where $ [0,\vec x)$ is the axis-parallel rectangle in the unit
square with one vertex at the origin and the other at $ \vec x =(x_1
,x_2)$, and $ \abs{ [0, \vec x)}=x_1 \cdot x_2$ denotes the Lebesgue
measure of the rectangle. This is the difference between the actual
number of points in the rectangle $ [0,\vec x)$ and the expected
number of points in this rectangle. The relative size of this
function, in various senses, must necessarily increase with $ N$.
The principal result in this direction is due to Roth
\cite{MR0066435}:

\begin{roth} \label{t.roth} In all dimensions $ d\ge 2$, we have the
following estimate
\begin{equation}\label{e.roth}
\norm D_N. 2. \gtrsim (\log N) ^{(d-1)/2}
\end{equation}
where the implied constant is only a function of dimension $ d$.
\end{roth}

The same bound holds for the $ L ^{p}$ norm, for $ 1<p<\infty $,
\cite{MR0491574}, and is known to be sharp as to the order of
magnitude, see \cite{MR610701} and \cite{MR903025} for a history of
this subject (for the case $d=2$, see Corollary \ref{c.p} below).
The endpoint cases of $ p=1$ and $ p=\infty $ are much harder.

We concentrate on the case of $ p=\infty $ in this note, just in
dimension $ d=2$, and refer the reader to
\cites{MR1032337,0705.4619,math.CA/0609815,MR637361}  for more
information about the case of $ d\ge 3$.  For information about the
case of  $ p=1 $, see \cites{MR637361,math.NT/0609817}. As it has
been shown in the fundamental theorem of W. Schmidt
\cite{MR0319933}, in dimension $d=2$, the lower bound on the
$L^\infty$ norm of the Discrepancy function is substantially greater
than the $L^p$ estimate \eqref{e.roth}:

\begin{schmidt}\label{t.schmidt}
For any set $ \mathcal A_N\subset [0,1]^2$ we have
\begin{equation}\label{e.schmidt}
\norm D_N .\infty . \gtrsim \log N \,.
\end{equation}
\end{schmidt}

This theorem is also sharp: one particular example is the famous van
der Corput set \cite{Cor35} --  a detailed discussion is contained
in \S3. In this paper, we give an interpolant between the results of
Roth and Schmidt, which is measured in the scale of exponential
Orlicz classes.

%%%%%%%%%%%%%%%%%%%%%%%%%%%%%% THEOREM THEOREM THEOREM
\begin{theorem}\label{t.lower}
For any $N$-point set $ \mathcal A_N \subset [0,1]^2$ we have
\begin{equation*}
\norm D_N .\operatorname {exp} (L ^{\alpha }) . \gtrsim (\log N ) ^{1-1/\alpha }\,,
\qquad 2\le \alpha < \infty \,.
\end{equation*}
\end{theorem}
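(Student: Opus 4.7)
The strategy is Orlicz duality. The Young function $t\mapsto e^{t^{\alpha}}-1$ defining $\operatorname{exp}(L^{\alpha})$ has complementary Young function equivalent to $t\mapsto t\bigl(\log(e+t)\bigr)^{1/\alpha}$, so the associate space is the Zygmund class $L(\log L)^{1/\alpha}$. H\"older's inequality in Orlicz spaces then gives $\Abs{\ip D_N, \Phi,} \lesssim \norm D_N.\operatorname{exp}(L^{\alpha}). \cdot \norm \Phi.L(\log L)^{1/\alpha}.$ for every admissible $\Phi$, so the theorem reduces to exhibiting, for each configuration $\mathcal A_N$, a single test function $\Phi$ satisfying the paired estimates
\begin{equation*}
\ip D_N, \Phi, \gtrsim \log N \qquad\text{and}\qquad \norm \Phi.L(\log L)^{1/\alpha}. \lesssim (\log N)^{1/\alpha};
\end{equation*}
dividing then yields $\norm D_N.\operatorname{exp}(L^{\alpha}). \gtrsim (\log N)^{1-1/\alpha}$.

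For the test function I would reuse the Hal\'asz Riesz product that underlies the classical proof of Schmidt's theorem. Set $n \coloneqq \lfloor\log_2 N\rfloor$ and fix a small absolute $\gamma>0$. For each pair $\vec r=(r_1,r_2)\in\mathbb N^{2}$ with $r_1+r_2=n$ I would form the $r$-function $f_{\vec r} \coloneqq \sum_{R}\epsilon_R h_R$, where $R$ ranges over the dyadic rectangles of shape $\vec r$, $h_R$ is the $\pm 1$-valued Haar function on $R$, and the signs $\epsilon_R\in\{\pm1\}$ are prescribed by Hal\'asz's recipe (tracking the signs of the Haar coefficients $\ip D_N, h_R,$ on rectangles large enough for Roth's small-ball estimate to force a nontrivial lower bound). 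The Riesz product
\begin{equation*}
\Phi \coloneqq \prod_{r_1+r_2=n}(1+\gamma f_{\vec r})-1
\end{equation*}
then satisfies $1+\Phi\ge 0$, $\int\Phi=0$, hence $\norm\Phi.1.\le 2$, and upon expanding the product and pairing against $D_N$, Hal\'asz's analysis shows that only the linear term survives (the higher-order cross terms being orthogonal to the relevant Haar expansion of $D_N$), giving $\ip D_N, \Phi, \gtrsim \gamma n \asymp \log N$.

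The genuinely new step is the Zygmund-norm estimate. Pointwise $\abs{1+\Phi}=\prod_{\vec r}\abs{1+\gamma f_{\vec r}}\le (1+\gamma)^{n+1}$, so $\log^{+}\abs{\Phi}\lesssim \gamma n$. A direct check from the Luxemburg norm---equivalently, the elementary bound $\norm g.L(\log L)^{1/\alpha}. \lesssim \norm g.1. \bigl(\log^{+}(\norm g.\infty./\norm g.1.)\bigr)^{1/\alpha}$ for bounded $g$---then yields
\begin{equation*}
\norm \Phi.L(\log L)^{1/\alpha}. \lesssim (\gamma n)^{1/\alpha} \lesssim (\log N)^{1/\alpha},
\end{equation*}
which combines with the first paragraph to prove the theorem.

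The main obstacle lies entirely in the Hal\'asz input: the sign assignment $\{\epsilon_R\}$ must be coordinated across all shapes $\vec r$ so that simultaneously (a) the linear term in the expansion produces the full factor $\log N$ via Roth's small-ball estimate, and (b) the higher-order Haar products $f_{\vec r_1}\cdots f_{\vec r_k}$ are annihilated by the Haar structure of $D_N$ on the relevant scales. This piece is delicate but already present in the literature. The Orlicz-norm side is by contrast routine; it is perhaps worth noting that the very crude pointwise bound $\abs\Phi\le e^{\gamma n}$ already extracts the sharp power $(\log N)^{1/\alpha}$, which is exactly what allows a single Riesz product to cover every $\alpha\in[2,\infty)$ at once.
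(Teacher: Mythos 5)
Your proposal is correct and follows the same overall strategy as the paper: duality between $\operatorname{exp}(L^{\alpha})$ and $L(\log L)^{1/\alpha}$, tested against a Hal\'asz-type Riesz product built from the $\mathsf r$ functions of Proposition \ref{p.rvec}. The differences are in execution. You keep all $n+1$ shapes $\vec r$ and insert a small parameter $\gamma$, so that the higher-order terms contribute $O(\gamma^{2}n)$ against the linear term's $\gtrsim\gamma n$; the paper instead takes $\gamma=1$ but restricts to shapes whose first coordinate is a multiple of a sufficiently large integer $a$, using Propositions \ref{p.>n} and \ref{p.Not2Many} to control the tail. The $\gamma=1$ choice has a payoff you forgo: the product $\prod(1+f_{\vec r})$ is then literally $2^{g}\mathbf 1_{E}$ with $\mathbb P(E)=2^{-g}$, so its Zygmund norm is read off from Proposition \ref{p.indicator}. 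Your substitute --- the bound $\norm g.L(\log L)^{1/\alpha}.\lesssim \norm g.1.\bigl(\log^{+}(\norm g.\infty./\norm g.1.)\bigr)^{1/\alpha}$ applied with $\norm \Phi.1.\le 2$ and $\norm \Phi.\infty.\le(1+\gamma)^{n+1}$ --- is a correct and arguably more robust route, since it does not need the Riesz product to be an indicator. Two small repairs are needed: (i) you must take $2^{n}\ge 2N$ (the paper chooses $2N<2^{n}\le 4N$) rather than $n=\lfloor\log_{2}N\rfloor$, since with $2^{n}\le N$ there need not be any empty rectangles of area $2^{-n}$ and the lower bound $\ip D_N,f_{\vec r},\gtrsim 1$ can fail; (ii) the higher-order products $f_{\vec r_1}\cdots f_{\vec r_k}$ are not orthogonal to, or ``annihilated by,'' $D_N$ --- by the product rule they are $\mathsf r$ functions of index larger than $n$, whose pairings with $D_N$ are merely small (Proposition \ref{p.>n}) and whose multiplicities are counted by Proposition \ref{p.Not2Many}; it is the geometric decay in the excess index, combined with your small $\gamma$, that makes their total contribution negligible, not any exact cancellation.
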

%%%%%%%%%%%%%%%%%%%%%%%%%%%%%% THEOREM THEOREM THEOREM

Of course the lower bound of $ (\log N) ^{1/2}$, the case of $
\alpha =2$ above, is a consequence of Roth's bound.  The other
estimates require proof, which is a variant of  Hal{\'a}sz's
argument \cite{MR637361}.  We give details below and also remark
that this estimate in the context of the Small Ball Inequality
\cites{MR95k:60049,MR96c:41052} is known \cite {2000b:60195}. In
addition, we demonstrate that the previous theorem is sharp.

%%%%%%%%%%%%%%%%%%%%%%%%%%%%%% THEOREM THEOREM THEOREM
\begin{theorem}\label{t.vdc} For all $ N$, there is a choice
of $ \mathcal A_N$, specifically the digit-scrambled van der Corput
set (see Definition~\ref{d.vdc}), for which we have
\begin{equation}\label{e.vdc}
\norm D_N. \operatorname {exp} (L ^{\alpha }). \lesssim (\log N) ^{
1-1/ \alpha }\,, \qquad 2\le \alpha <\infty \,.
\end{equation}
\end{theorem}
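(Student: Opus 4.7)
The strategy is to interpolate in the exponential Orlicz scale between the Schmidt-type endpoint $\norm D_N. \infty. \lesssim \log N$ (classical for the van der Corput construction) and a subgaussian endpoint $\norm D_N. \operatorname{exp}(L^2). \lesssim (\log N)^{1/2}$. The interpolant exponent $1 - 1/\alpha$ is exactly what one gets between $1/2$ at $\alpha = 2$ and $1$ at $\alpha = \infty$.

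The setup is the standard two-parameter Haar expansion $D_N = f + E$, where $f = \sum_R \alpha_R h_R$ is a hyperbolic sum supported on dyadic rectangles $R \subset [0,1]^2$ with $|R| = 2^{-n}$, and $E$ is a lower-complexity remainder handled directly. For the digit-scrambled van der Corput set the coefficients satisfy $|\alpha_R| \sim 1$ (with $L^\infty$-normalized Haar functions) and exhibit cancellation among the $n+1$ scale pairs $(r_1, r_2)$ with $r_1 + r_2 = n$. A direct computation shows that the square function is pointwise controlled: $S(f)(x) = \bigl(\sum_R |\alpha_R|^2 \mathbf{1}_R(x)\bigr)^{1/2} \le (n+1)^{1/2} \sim (\log N)^{1/2}$, since any $x$ lies in exactly one rectangle per scale pair. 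Applying the two-parameter Chang--Wilson--Wolff inequality (the 2cww variant stated in the paper, tailored to the hyperbolic structure) then yields $\norm f. \operatorname{exp}(L^2). \lesssim \norm S(f). \infty. \lesssim (\log N)^{1/2}$.

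With both endpoints in hand, the theorem follows by an elementary Orlicz interpolation. For $\alpha \ge 2$ the pointwise inequality $|f|^\alpha \le |f|^2 \norm f. \infty. ^{\alpha - 2}$, applied inside the defining exponential of the $\operatorname{exp}(L^\alpha)$ norm, gives
\[
\norm f. \operatorname{exp}(L^\alpha). \lesssim \norm f. \operatorname{exp}(L^2).^{2/\alpha} \norm f. \infty. ^{1 - 2/\alpha} \lesssim (\log N)^{1/\alpha} (\log N)^{1 - 2/\alpha} = (\log N)^{1 - 1/\alpha}.
\]

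\textbf{Main obstacle.} The heart of the matter is the subgaussian endpoint. A generic two-parameter dyadic martingale with bounded square function enjoys only $\exp(L^1)$ integrability: the Chang--Wilson--Wolff exponent $2/d$ in $d$ parameters degrades from $2$ at $d=1$ to $1$ at $d=2$. What rescues us here is the hyperbolic restriction $|R| = 2^{-n}$ together with the cancellation built into the digit-scrambling: the admissible scale pairs form a linear family of size $n+1$ rather than a two-dimensional grid, which saves exactly the one logarithm needed to upgrade $\exp(L)$ to $\exp(L^2)$. Identifying and invoking the correct variant of CWW that exploits this hyperbolic structure is where the technical work lies; the Haar expansion, the Schmidt bound, the treatment of $E$, and the Orlicz interpolation are classical or routine.
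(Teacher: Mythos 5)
Your overall architecture coincides with the paper's: an $\operatorname{exp}(L^2)$ endpoint obtained from a square-function bound via the fixed-volume (``hyperbolic'') variant of Chang--Wilson--Wolff (Theorem~\ref{t.one}), combined with the $L^\infty$ bound $\norm D_N.\infty. \lesssim \log N$ through the elementary interpolation $\norm f.\operatorname{exp}(L^\alpha). \lesssim \norm f.\operatorname{exp}(L^2).^{2/\alpha}\,\norm f.\infty.^{1-2/\alpha}$; this last step is correct and is exactly Proposition~\ref{p.AA}. The gap is in the decomposition $D_N=f+E$ with $f$ supported on the single layer $\lvert R\rvert=2^{-n}$ and $E$ a ``lower-complexity remainder handled directly.'' The Haar spectrum of $D_N$ is not concentrated on that layer: there are nonzero coefficients at every volume, and the remainder contributes at the full order $\sqrt{n}$, not at a lower order. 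The large-volume part $\lvert R\rvert>2^{-n}$ can still be absorbed by your method applied scale by scale (the coefficient bound $\abs{\ip D_N,h_R,}\lesssim 1/N$ gives a geometric factor $2^{k-n}$ at volume $2^{-k}$, so the triangle inequality in $\operatorname{exp}(L^2)$ sums to $\sqrt n$), but the small-volume part of the counting function is genuinely two-parameter: the rectangles with nonzero coefficient range over all volumes $2^{-m}$ with $m\ge n$, the pointwise square-function bound obtained by counting admissible shapes is of order $n$ rather than $\sqrt n$, and Theorem~\ref{t.one} does not apply because the volume is not fixed --- the honest two-parameter Chang--Wilson--Wolff inequality would only yield $\operatorname{exp}(L)$, which loses the theorem. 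The paper calls this the hardest term and handles it by using the net property of $\mathcal V_{n,\sigma}$ to collapse, for each fixed $x_2$, the entire two-parameter sum onto a one-parameter Haar sum in $x_1$ with coefficients $\alpha_{\widetilde R_1}(x_2)$ satisfying $\abs{\alpha_{\widetilde R_1}(x_2)}\lesssim \abs{\widetilde R_1}$, after which one-dimensional Littlewood--Paley gives $p^{1/2}n^{1/2}$ and hence $\operatorname{exp}(L^2)$. Without some such structural input your argument does not close.

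Two further points are asserted rather than proved, and both carry real content. The coefficient bound $\abs{\ip D_N,h_R,}\lesssim 1/N$ (equivalently $\lvert\alpha_R\rvert\lesssim 1$ in the $L^\infty$-normalization) is the main computational lemma of the paper (Lemma~\ref{l.haarcoeffs}); it is proved by grouping the points of $\mathcal V_{n,\sigma}$ inside $R$ into quadruples and exhibiting a second-order cancellation via the Lipschitz sawtooth function $\phi$. Moreover the digit scrambling is not cosmetic: without choosing $\sigma$ so that half the digits are flipped, the zero-order coefficient $\int D_N \simeq n$ (Lemma~\ref{l.integral}) already destroys any bound of order $\sqrt n$. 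Finally, the statement is for all $N$, while the construction and the square-function estimates live at $N=2^n$; the paper closes this with a separate rescaling argument that you would also need to supply.
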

%%%%%%%%%%%%%%%%%%%%%%%%%%%%%% THEOREM THEOREM THEOREM

In view of Proposition \ref{p.comparable}, taking $\alpha=2$, the
theorem above immediately yields the sharpness of the $L^p$ lower
bounds in $d=2$ with explicit dependence of constants on $p$.

\begin{corollary}\label{c.p} For every $1\le p<\infty$, the set $\mathcal A_N$ from Theorem \ref{t.vdc}
satisfies
\begin{equation}
\norm D_N. p. \lesssim p^{1/2} (\log N )^{1/2},
\end{equation}
where the implied constant is independent of $p$.
\end{corollary}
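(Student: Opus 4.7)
The plan is to combine Theorem~\ref{t.vdc} with the standard comparison between $L^p$ norms and exponential Orlicz norms, which is what Proposition~\ref{p.comparable} supplies. Specifically, taking $\alpha=2$ in Theorem~\ref{t.vdc} yields
\begin{equation*}
\norm D_N.\operatorname{exp}(L^2).\lesssim (\log N)^{1/2}
\end{equation*}
for the digit-scrambled van der Corput set. It then suffices to establish the pointwise/norm relation $\norm f.p.\lesssim p^{1/\alpha}\norm f.\operatorname{exp}(L^\alpha).$, which in the case $\alpha=2$ delivers exactly the claimed $p^{1/2}$ dependence.

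To see why this relation holds, I would unwind the definition of the Luxemburg norm in the Orlicz space $\operatorname{exp}(L^\alpha)$. If $\norm f.\operatorname{exp}(L^\alpha). \le A$, then $\int \operatorname{exp}(\abs{f}/A)^\alpha\,\d x \le C$ (on the unit square, a space of total mass~$1$). Expanding this exponential in its Taylor series and comparing term-by-term gives, for each positive integer $k$,
\begin{equation*}
\frac{1}{(k!)}\int \bigl(\abs{f}/A\bigr)^{\alpha k}\,\d x \le C,
\end{equation*}
whence $\norm f.\alpha k. \lesssim (k!)^{1/(\alpha k)} A \lesssim k^{1/\alpha}A$ by Stirling. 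Choosing $k=\lceil p/\alpha\rceil$ and using the monotonicity of $L^p$ norms on a probability space yields $\norm f.p.\lesssim p^{1/\alpha}\norm f.\operatorname{exp}(L^\alpha).$; this is precisely the content one expects from Proposition~\ref{p.comparable}.

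Plugging in $\alpha=2$ and the bound from Theorem~\ref{t.vdc} then gives
\begin{equation*}
\norm D_N.p.\lesssim p^{1/2}\norm D_N.\operatorname{exp}(L^2).\lesssim p^{1/2}(\log N)^{1/2},
\end{equation*}
with an absolute constant, which is the statement of Corollary~\ref{c.p}. There is essentially no obstacle here: the only thing to be careful about is tracking the $p$-dependence of the constant in the Orlicz-to-$L^p$ comparison, ensuring the exponent is $1/\alpha=1/2$ and not something worse; this is purely bookkeeping in Stirling's formula. For $1\le p\le 2$, the bound follows trivially from the $p=2$ case by Hölder's inequality on the probability space $[0,1]^2$.
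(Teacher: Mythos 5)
Your proposal is correct and follows the paper's own route exactly: the paper derives Corollary~\ref{c.p} by combining Theorem~\ref{t.vdc} with $\alpha=2$ and the comparison $\norm f.p.\lesssim p^{1/2}\norm f.\operatorname{exp}(L^2).$ of Proposition~\ref{p.comparable}. The only difference is that you also supply the standard Taylor-expansion/Stirling proof of that comparison, which the paper treats as well known.
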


There is another variant of the Roth lower bound, which we state here. 

%%%%%%%%%%%%%%%%%%%%%%%%%%%%%% THEOREM THEOREM THEOREM
\begin{theorem}\label{t.bmolower}  We have the estimate 
\begin{equation*}
\norm  D_N . \operatorname {BMO} _{1,2}. \gtrsim ( {\log
N})^{1/2}\,,
\end{equation*}
where the norm is the dyadic Chang-Fefferman product $\operatorname {BMO}$
norm (see Definition~\ref{d.cf}), introduced in \cite{MR584078}.
\end{theorem}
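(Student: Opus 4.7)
The plan is to exploit the fact that the Chang-Fefferman product $\operatorname{BMO}$ norm, tested against the single trial set $U = [0,1]^2$, already dominates the $\ell^2$ sum of all Haar coefficients of $D_N$ at any fixed scale; the bound then follows from Roth's classical Haar-coefficient calculation, now performed at a single ``hyperbolic'' resolution rather than summed across scales. Fix an integer $n$ with $2^n \ge 4N$, and denote by $\mathcal R_n$ the collection of the $(n+1)2^n$ dyadic rectangles $R = R_1 \times R_2 \subset [0,1]^2$ of area $2^{-n}$. Write $h_R$ for the $L^2$-normalized tensor-product Haar function supported on $R$.

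The first step is the standard Haar-vanishing identity: if $\mathcal A_N \cap R = \emptyset$, then the counting part $\sharp\bigl(\mathcal A_N \cap [0,\vec x)\bigr)$ decomposes on $R$ as a sum of a function of $x_1$ alone and a function of $x_2$ alone, and is therefore orthogonal to $h_R$ by the tensor structure. Only the bilinear term $-N x_1 x_2$ survives, yielding $\Abs{\ip D_N, h_R,} = N \abs R^{3/2}/16$ by a direct computation. The second step is a pigeonhole count: each point of $\mathcal A_N$ lies in exactly one rectangle of each of the $n+1$ shapes making up $\mathcal R_n$, so at most $(n+1)N \le \tfrac14\abs{\mathcal R_n}$ rectangles of $\mathcal R_n$ can meet $\mathcal A_N$; hence $\gtrsim n\cdot 2^n$ rectangles in $\mathcal R_n$ are empty. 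Summing the squared Haar coefficients over these empty rectangles, and using $N \asymp 2^n$, gives
\begin{equation*}
\sum_{\substack{R \in \mathcal R_n \\ \mathcal A_N \cap R = \emptyset}} \Abs{\ip D_N, h_R,}^2 \gtrsim n \cdot 2^n \cdot N^2 \cdot 2^{-3n} \gtrsim n.
\end{equation*}

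The last step is to plug $U = [0,1]^2$ into the Chang-Fefferman definition of $\operatorname{BMO}_{1,2}$ (Definition~\ref{d.cf}): since $\abs{[0,1]^2} = 1$ and every element of $\mathcal R_n$ is a dyadic sub-rectangle of $[0,1]^2$, the displayed sum is a lower bound for $\norm D_N.\operatorname{BMO}_{1,2}.^2$, giving $\norm D_N.\operatorname{BMO}_{1,2}. \gtrsim n^{1/2} \asymp (\log N)^{1/2}$. The argument is really just Roth's $L^2$ lower bound repackaged through the BMO lens, and the only genuine obstacle I anticipate is the bookkeeping one of verifying that the Haar normalization in Definition~\ref{d.cf} matches the convention used above, so that the squared Haar sum really appears on the right-hand side of the BMO norm. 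No BMO/$H^1$ duality or delicate covering lemma is required.
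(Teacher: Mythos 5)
Your proof is correct and follows essentially the same route as the paper's: test the Chang--Fefferman norm with the single set $U=[0,1]^2$, observe that the counting part is orthogonal to $h_R$ whenever $R$ misses $\mathcal A_N$ so that only the linear part contributes $\simeq N\lvert R\rvert^{3/2}$ per ($L^2$-normalized) coefficient, and count that a positive proportion of the $(n+1)2^n$ hyperbolic rectangles of area $2^{-n}$ are empty. The only nit is that you should fix $2^n\simeq N$ (e.g.\ $2N<2^n\le 4N$, as in the paper) rather than merely $2^n\ge 4N$, since your final computation $n\cdot 2^n\cdot N^2\cdot 2^{-3n}\gtrsim n$ genuinely uses the upper bound $2^n\lesssim N$ as well as the lower bound.
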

%%%%%%%%%%%%%%%%%%%%%%%%%%%%%% THEOREM THEOREM THEOREM

Indeed, this Theorem is just a corollary to a standard proof of Roth's Theorem, 
and its main
interest lies in the fact that the estimate above is sharp.  It is useful to 
recall the simple observation that the  $ \operatorname {BMO}$ norm is insensitive to functions that 
are constant in either the vertical or horizontal direction.  That is, we have 
$ \norm  D_N . \operatorname {BMO} _{1,2}.= \norm  \widetilde D_N . \operatorname {BMO} _{1,2}. $, 
where 
\begin{equation*}
\begin{split}
\widetilde D_N (x_1,x_2)
&= D_N (x_1,x_2)- \int _0 ^{1} D_N (x_1,x_2) \; dx_1
\\& \qquad -\int _0 ^{1} D_N (x_1,x_2) \; dx_2 +
\int _0 ^{1 }\!\!\int _0 ^{1} D_N (x_1,x_2) \; dx_1 \, dx_2\,.
\end{split}
\end{equation*}

%%%%%%%%%%%%%%%%%%%%%%%%%%%%%% THEOREM THEOREM THEOREM
\begin{theorem}\label{t.bmo}
For $ N= 2 ^{n}$, there is a choice of $ \mathcal A_N$, specifically
the digit-scrambled van der Corput set, for which we have
\begin{equation}\label{e.bmoupper}
\norm   D_N . \operatorname {BMO} _{1,2} . \lesssim (\log
N) ^{1/2} \,.
\end{equation}
\end{theorem}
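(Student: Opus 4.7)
The plan is to control the Chang--Fefferman norm directly by analyzing the two-parameter dyadic Haar expansion of $D_N$ for the digit-scrambled vdC set. Recall that this norm admits the equivalent form
\[
\|f\|_{\operatorname{BMO}_{1,2}}^{2} \simeq \sup_{U \text{ open}} |U|^{-1} \sum_{R \subset U} |\langle f, h_R\rangle|^{2}/|R|,
\]
where $R$ ranges over dyadic rectangles in $U$ and $h_R = h_{R_1}\otimes h_{R_2}$ is the $\pm 1$ two-parameter Haar function. By the remark preceding the theorem I may replace $D_N$ by $\widetilde D_N$ and so ignore all rectangles one of whose sides has length $1$.

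First I would compute $\langle D_N, h_R\rangle$ for a typical $R = R_1 \times R_2$. The linear part $Nx_1 x_2$ is a tensor product, and $\langle x, h_I\rangle = -|I|^{2}/4$ for any dyadic interval $I$, so $\langle Nx_1 x_2, h_R\rangle = N|R|^{2}/16$ up to sign. The counting part $\sum_{a \in \mathcal A_N}\langle \mathbf{1}_{[a,\vec 1]}, h_R\rangle$ receives contributions only from points $a \in R$, and each contribution is bounded by $|R|/4$. The decisive structural input comes from the digit-scrambled vdC set $V_N$: every dyadic rectangle of critical area $|R|=2^{-n}$ contains exactly one point of $V_N$, giving $|\langle D_N, h_R\rangle|\lesssim |R|$ at the critical scale; and for each super-critical rectangle ($|R|>2^{-n}$, neither side of length $1$) the digit-scrambling forces the signed Haar contributions of the $N|R|$ points in $V_N \cap R$ to cancel exactly against the linear-part term, so that $\langle D_N, h_R\rangle = 0$. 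Sub-critical rectangles ($|R|<2^{-n}$) are handled by an elementary per-point estimate whose total contribution sums geometrically.

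With the Haar coefficients essentially supported on critical-scale rectangles and bounded by $|R|$ there, the BMO calculation reduces to counting. Inside any open $U$ there are $n+1$ aspect ratios of the form $(2^{-k}, 2^{-(n-k)})$ and at most $2^{n}|U|$ rectangles per aspect ratio, each contributing $|\langle D_N, h_R\rangle|^{2}/|R| \lesssim 2^{-n}$ to the sum. The total is $\lesssim (n+1)|U| \lesssim |U|\log N$, and dividing by $|U|$ and taking a square root yields the claimed estimate $\|D_N\|_{\operatorname{BMO}_{1,2}} \lesssim (\log N)^{1/2}$.

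The main obstacle is verifying the super-critical cancellation. Mere cardinality balance does not force $\langle D_N, h_R\rangle$ to vanish; the cancellation rests on the arithmetic symmetry of the digit-scrambled construction, which permits one to pair the $N|R|$ points of $V_N \cap R$ so that their signed Haar contributions reproduce the linear term $N|R|^{2}/16$. Once this cancellation, together with the sub-critical decay, is established, the remaining counting step is routine, and the proof exhibits the sharpness of the lower bound of Theorem~\ref{t.bmolower}.
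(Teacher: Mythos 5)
Your overall architecture is the same as the paper's: bound the Haar coefficients of $D_N$, then count rectangles inside $U$. But two of your structural claims do not hold as stated. First, the assertion that $\ip D_N, h_R, = 0$ for every super-critical rectangle is false. The correct statement, and the one the paper proves (Lemma \ref{l.haarcoeffs} via the quadruple cancellation of Proposition \ref{p.cancel}), is $\abs{\ip D_N, h_R,} \lesssim 1/N$. When one pairs the points of $\mathcal V_{n,\sigma}\cap R$ into quadruples, each quadruple's contribution equals $\lvert R\rvert/4$ only up to an error
$\lvert R\rvert\,\bigl[\phi(2^ku)-\phi(2^k(u\oplus 2^{-n+l}))\bigr]\bigl[\phi(2^lv)-\phi(2^l(v\oplus 2^{-n+k}))\bigr]$,
which is generically $\pm 1/(N^2\lvert R\rvert)$, not zero; summing over the $N\lvert R\rvert/4$ quadruples gives $O(1/N)$. (For instance, with $n=4$ and $R=[0,\tfrac12)^2$ there is a single quadruple and the coefficient is $\pm 1/64\ne 0$.) The good news is that the bound $\lesssim 1/N$ at all super-critical scales still sums to $\lesssim n\lvert U\rvert$, dominated by the critical scale, so your final count survives --- but you must actually prove the quantitative quadruple cancellation rather than assert exact vanishing, and this is the technical heart of the argument.

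Second, your treatment of the sub-critical counting part has a genuine gap. A ``per-point estimate whose total contribution sums geometrically'' yields $\sum_{\lvert R\rvert<2^{-n}} \ip C,h_R,^2/\lvert R\rvert \lesssim n$ after summing over all $N$ points, with no factor of $\lvert U\rvert$; dividing by $\lvert U\rvert$ then gives $n/\lvert U\rvert$, which is useless when $U$ is small. The $\operatorname{BMO}_{1,2}$ norm requires the bound $\lesssim n\lvert U\rvert$ \emph{localized to} $U$, and this is exactly where the paper works harder: it takes the maximal sub-critical dyadic rectangles $\mathcal R$ inside $U$ that meet the point set, shows $\sum_{R\in\mathcal R}\lvert R\rvert \lesssim n\lvert U\rvert$ (rectangles in $\mathcal R$ with a fixed first side length are pairwise disjoint and contained in $U$, and there are only $n$ admissible side lengths), and then uses Bessel's inequality inside each maximal rectangle to control the sum of squared coefficients by $\lvert R\rvert$. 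Some such localization argument is indispensable and is missing from your sketch.
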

%%%%%%%%%%%%%%%%%%%%%%%%%%%%%% THEOREM THEOREM THEOREM

The main point of these results is that they unify the theorems of
Roth and Schmidt in a sharp fashion.  This line of research is also
of interest in higher dimensions, but the relevant conjectures do
not seem to be as readily apparent.  As such, we think that this is
an interesting theme for further investigation.

In the next section we collect a variety of results needed to prove
the main Theorems. These results are drawn from the theory of
Irregularities of Distribution, Harmonic Analysis, Probability
Theory and other subjects.   In \S3 we discuss the structure of the
digit-scrambled van der Corput set. Section 4 is dedicated to the
analysis of the Haar decomposition of the Discrepancy function for
the van der Corput set. The proofs of the main theorems above are
then taken up in the \S5 and \S6.

The results of this paper concern refinements of the $ L ^{\infty }$-endpoint 
estimates for the Discrepancy Function.  In three dimensions, even the 
correct form of Schmidt's Theorem is not yet known, making the discussion of these 
results in three dimensions entirely premature, though speculation about 
such results could inform the analysis of the more difficult three dimensional case.  
See \cites{math.CA/0609815,0705.4619} for recent information about the higher dimensional 
versions of Schmidt's Theorem. 

The authors thank the referee for an expert reading, and suggestions to improve the paper.

%%%%%%%%%%%%%%%%%%%%%%%%%%%%%% SECTION  SECTION SECTION
%%%%%%%%%%%%%%%%%%%%%%%%%%%%%% SECTION  SECTION SECTION
\section{Preliminary Facts} %\label{s.}

We suppress many constants which do not affect the arguments in
essential ways. $ A \lesssim B$ means that there is an absolute
constant $ K>0$ such that $ A \le K B$. Thus $ A \lesssim 1$ means
that $ A$ is bounded by an absolute constant. And if $ A \lesssim B
\lesssim A$, we write $ A \simeq B$.

\bigskip

%%%%%%%%%%%%%%%%%%%%%%%%%%%%%% SUBSECTION SUBSECTION SUBSECTION SUBSECTION
 %%%%%%%%%%%%%%%%%%%%%%%%%%%%%% SUBSECTION SUBSECTION SUBSECTION SUBSECTION
\subsection*{Inequalities}%\label{ss.}

We recall the square function inequalities for martingales, in a
form convenient for us.

In one dimension, the class of dyadic intervals in the unit interval  are
$\mathcal D {} \coloneqq {}\{ [j2^{-k},(j+1)2^{-k})
\mid j,k\in \mathbb N\,, 0\le j < 2 ^{k}\} $.  Let $ \mathcal D_n$ denote the
dyadic intervals of length $ 2 ^{-n}$, and by abuse of notation, also the
sigma field generated by these intervals.  For an integrable function $ f$ on
$ [0,1]$, the conditional expectation is
\begin{equation*}
f_n=\mathbb E (f \mid \mathcal D_n) \coloneqq  \sum _{I\in \mathcal D_n} \mathbf 1_{I}  \cdot
\lvert  I\rvert ^{-1}  \int _{I} f (y)\;dy\,.
\end{equation*}
The sequence of functions $ \{ f_n \mid n\ge 0\}$ is a \emph{martingale}.  The
\emph{martingale difference sequence } is $ d_0=f_0$, and $ d_n= f_n-f _{n-1}$ for
$ n\ge 1$.  The sequence of functions $ \{d_n\mid n\ge 0\}$ are pairwise orthogonal.
The \emph{square function} is
\begin{equation*}
\operatorname S (f) \coloneqq \Biggl[\sum _{n=0} ^{\infty } \lvert  d_n\rvert ^2   \Biggr]
^{1/2} \,.
\end{equation*}
We have the following extension of the Khintchine inequalities.

%%%%%%%%%%%%%%%%%%%%%%%%%%%%%% THEOREM THEOREM THEOREM
\begin{theorem}\label{t.martKhintchine} The inequalities below hold, for
some absolute choice of constant $ C>0$.
\begin{equation}\label{e.martKhintchine}
\norm f. p. \le C \sqrt p \norm \operatorname S (f).p.\,, \qquad 2\le p < \infty \,.
\end{equation}
In addition, this inequality holds for Hilbert space valued
functions $ f$.
\end{theorem}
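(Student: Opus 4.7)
The plan is to obtain the sharp $\sqrt{p}$ growth of the constant via a two-step strategy: first establish a subgaussian concentration estimate for martingales whose square function is uniformly bounded, and then reduce the general case to this one by a stopping-time argument.

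For Step 1, I would prove that if $\norm \operatorname{S}(f). \infty . \le M$, then
\begin{equation*}
\abs{\{\abs{f} > \lambda\}} \lesssim \exp\bigl(-c \lambda ^{2}/M ^{2}\bigr)\,.
\end{equation*}
The vehicle is the exponential martingale $E _{n} ^{\mu } \coloneqq \exp\bigl(\mu f _{n} - C \mu ^{2} \sum _{k \le n} \abs{d _{k}} ^{2}\bigr)$. A conditional Khintchine-type bound
\begin{equation*}
\mathbb E \bigl(e ^{\mu d _{n}} \,\big|\, \mathcal D _{n-1}\bigr) \le \exp\bigl( C \mu ^{2}\, \mathbb E (\abs{d _{n}} ^{2} \mid \mathcal D _{n-1})\bigr),
\end{equation*}
valid after a mild truncation of the differences and unchanged by passage to the Hilbert-valued setting (since only the scalar $\abs{d _{n}}$ enters), forces $E _{n} ^{\mu }$ to be a supermartingale. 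Chebyshev's inequality followed by optimization in $\mu$ yields the claimed tail. Integrating it against $p \lambda ^{p-1}\, d\lambda$ over $(0,\infty )$ then produces $\norm f. p. \le C \sqrt p\, \norm \operatorname{S} (f). \infty .$ for every $p \ge 2$.

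For Step 2, to upgrade $\norm \operatorname{S}(f). \infty .$ to $\norm \operatorname{S}(f). p .$ on the right-hand side, I would use a stopping-time decomposition. For each level $\lambda > 0$, set $\tau _{\lambda } \coloneqq \inf\{n : \operatorname{S} _{n+1}(f) > \lambda \}$ and split $f = f ^{\tau _{\lambda }} + (f - f ^{\tau _{\lambda }})$. The stopped martingale obeys $\operatorname{S} (f ^{\tau _{\lambda }}) \le \lambda$ up to a single harmless jump at time $\tau _{\lambda }$, so Step~1 gives distributional control of $f ^{\tau _{\lambda }}$, while $\abs{\{\tau _{\lambda } < \infty \}} \le \abs{\{\operatorname{S} (f) > \lambda \}}$. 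Slicing $\norm f.p. ^{p}$ dyadically in $\lambda$ along the level sets of $\operatorname{S} (f)$ and summing the two contributions yields the desired bound $\norm f. p. \lesssim \sqrt p\, \norm \operatorname{S}(f). p.$

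The main obstacle sits squarely in Step~1. The naive conditional subgaussian estimate $\mathbb E (e ^{\mu d _{n}} \mid \mathcal D _{n-1}) \le e ^{\mu ^{2} \mathbb E (\abs{d _{n}} ^{2} \mid \mathcal D _{n-1})/2}$ holds only when the conditional law of $d _{n}$ is itself subgaussian, which is not automatic when the differences are unbounded. Handling this cleanly — either by truncating the differences and treating the complementary tail separately, or by invoking the Bernstein-type martingale refinement of de la Pe\~{n}a — is precisely where the correct $\sqrt p$, rather than the linear-in-$p$ constant coming out of a crude Burkholder good-$\lambda$ argument, is earned.
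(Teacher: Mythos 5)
The paper itself does not prove this theorem: it quotes it from the literature (Chang--Wilson--Wolff for real-valued martingales, Fefferman--Pipher for the Hilbert-valued extension, Wang for sharp constants), so your argument must be judged against the standard proofs. Your overall strategy --- subgaussian concentration when $\operatorname S(f)$ is uniformly bounded, followed by a stopping-time upgrade --- is indeed the classical route, and Step 1 is sound; in fact it is easier than you fear in the scalar case. For the dyadic filtration the conditional law of $d_n$ given $\mathcal D_{n-1}$ is a symmetric two-point distribution and $\abs{d_n}$ is $\mathcal D_{n-1}$-measurable, so $\mathbb E\bigl(e^{\mu d_n}\mid\mathcal D_{n-1}\bigr)=\cosh(\mu\abs{d_n})\le e^{\mu^2\abs{d_n}^2/2}$ holds exactly, with no truncation and no Bernstein-type refinement. (The same observation yields a much shorter proof: conditional symmetry gives $(d_n)\overset{d}{=}(\epsilon_nd_n)$ for an independent Rademacher sequence, and the sharp Khintchine--Kahane inequality applied conditionally on the $d_n$ produces the $\sqrt p$ in one step, in the Hilbert-valued case as well.)

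The genuine gaps are in Step 2 and in the Hilbert-valued claim of Step 1. The concentration estimate for the stopped martingale is \emph{absolute}: it bounds $\abs{\{\abs{f^{\tau_{\epsilon\lambda}}}>\lambda\}}$ by $Ce^{-c/\epsilon^2}$ times the measure of the whole space, and $\int_0^\infty p\lambda^{p-1}e^{-c/\epsilon^2}\,d\lambda$ diverges; likewise, slicing along the level sets $E_j=\{2^j<\operatorname S(f)\le 2^{j+1}\}$ only yields $\int_{E_j}\abs{f}^p\le(C\sqrt p\,2^{j+1})^p$ with no factor of $\abs{E_j}$, which merely recovers the $L^\infty$ bound of Step 1 rather than $\lVert \operatorname S(f)\rVert_p$. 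What is actually needed is the \emph{relative} good-$\lambda$ inequality $\abs{\{f^*>2\lambda,\ \operatorname S(f)\le\epsilon\lambda\}}\le Ce^{-c/\epsilon^2}\abs{\{f^*>\lambda\}}$, where $f^*=\sup_n\abs{f_n}$; its proof requires a second localization at the first time $\abs{f_n}$ exceeds $\lambda$, restarting the martingale on each atom of the stopped $\sigma$-algebra and applying the concentration estimate atom by atom. Integrating that inequality and choosing $\epsilon\simeq p^{-1/2}$ is precisely where the $\sqrt p$ is earned; your write-up omits the localization, and without it the summation does not close. Separately, the exponential supermartingale $\exp\bigl(\mu f_n-C\mu^2\sum_{k\le n}\abs{d_k}^2\bigr)$ does not make sense for Hilbert-space-valued $f_n$, and replacing $f_n$ by $\abs{f_n}$ destroys the martingale structure, so the assertion that Step 1 is ``unchanged'' in the vector-valued setting is unjustified as stated --- this is exactly the part of the theorem the paper attributes to Fefferman and Pipher.
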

%%%%%%%%%%%%%%%%%%%%%%%%%%%%%% THEOREM THEOREM THEOREM

For real-valued martingales, this was observed by \cite{MR800004}.
The extension to Hilbert space valued martingales is useful for us
and is proved in \cite{MR1439553}. The best constants in these
inequalities are known for $ p\ge 3$ \cite{MR1018577}.

%%%%%%%%%%%%%%%%%%%%%%%%%%%%%% SUBSECTION SUBSECTION SUBSECTION SUBSECTION
 %%%%%%%%%%%%%%%%%%%%%%%%%%%%%% SUBSECTION SUBSECTION SUBSECTION SUBSECTION
\subsection*{Orlicz Spaces}%\label{ss.}

For background on Orlicz Spaces, we refer the reader to \cite{MR0500056}.
Consider a symmetric convex function $ \psi $,
which is zero at the origin,  and is otherwise non-zero.  Let $ (\Omega , P)$
be a probability space, on which our functions are defined, and let  $ \mathbb E $
denote expectation over the probability space.
We can define
\begin{equation}\label{e.psi}
\norm f. L ^\psi . = \inf \{ K>0\mid  \mathbb E \psi (f \cdot K ^{-1} )\le 1\}\,,
\end{equation}
where we define the infimum over the empty set to be $ \infty $. The
set of functions $ L ^{\psi } = \{f \mid  \norm f. L ^{\Psi } . <
\infty \}$ is a normed linear space, called the Orlicz space
associated with $ \psi $.

We are interested in, for instance, $ \psi (x)= \operatorname e ^{x
^2 }-1$, in which case we denote the Orlicz space by $ \operatorname
{exp} (L ^{2})$. More generally, for $\alpha >0$, we let $ \psi
_{\alpha } (x)$ be a symmetric convex function which equals $
\operatorname e ^{\lvert  x\rvert ^{\alpha } }-1$ for $ \lvert
x\rvert $ sufficiently large, depending upon $ \alpha $.\footnote{We
are only interested in measuring the behavior of functions for large
values of $ f$, so this requirement is sufficient. For $ \alpha >1$,
we can insist upon this equality for all $ x$.}  And we write $ L
^{\psi _{\alpha }} = \operatorname {exp}(L ^{\alpha })$. These are
the spaces used in the statements of our main Theorems \ref{t.lower}
and \ref{t.vdc}. It is obvious that, for all $1\le p<\infty$ and
$\alpha
>0$, we have $L^p \supset \operatorname {exp}(L ^{\alpha }) \supset
L^\infty$, hence Theorem \ref{t.lower} can be indeed viewed as
interpolation between the estimates of Roth \eqref{e.roth} and
Schmidt \eqref{e.schmidt}. The following useful proposition is
well-known and follows from elementary methods.

%%%%%%%%%%%%%%%%%%%%%%%%%%%%%% PROPOSITION PROPOSITION PROPOSITION
\begin{proposition}\label{p.comparable} We have the following equivalence of norms
valid for all $ \alpha >0$:
\begin{equation*}
\norm f. \operatorname {exp}(L ^{\alpha }). \simeq
\sup _{p>1} p ^{-1/\alpha } \norm f. p. \,.
\end{equation*}
\end{proposition}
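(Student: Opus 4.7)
The plan is to prove the two inequalities separately, translating between the Orlicz norm and $L^p$ norms via the Taylor expansion of the exponential together with Stirling's formula. Throughout I would identify $\norm f. \operatorname{exp}(L^\alpha) .$ with $\inf\{K>0 : \mathbb{E}\, e^{\abs{f/K}^\alpha} \le 2\}$; these are comparable up to $\alpha$-dependent constants since $\psi_\alpha(x)$ differs from $e^{\abs{x}^\alpha}-1$ only on a set where both are bounded, so on a probability space the discrepancy contributes only a bounded additive term to $\mathbb{E}\psi_\alpha(f/K)$.

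\textbf{Direction $\gtrsim$.} Setting $K = \norm f. \operatorname{exp}(L^\alpha) .$, expand
\[
\sum_{k=0}^{\infty} \frac{\mathbb{E}\abs{f}^{\alpha k}}{k!\, K^{\alpha k}} \;=\; \mathbb{E}\, e^{\abs{f/K}^\alpha} \;\lesssim\; 1,
\]
so each term is $\lesssim 1$, i.e.\ $\mathbb{E}\abs{f}^{\alpha k} \lesssim K^{\alpha k} k!$ for each integer $k \ge 1$. Stirling gives $(k!)^{1/(\alpha k)} \lesssim k^{1/\alpha}$, and writing $p = \alpha k$ this is $\norm f. p . \lesssim K p^{1/\alpha}$. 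For arbitrary $p>1$, monotonicity of $L^p$ norms on a probability space lets us round $p$ up to the next integer multiple of $\alpha$, losing only a constant.

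\textbf{Direction $\lesssim$.} Let $M = \sup_{p>1} p^{-1/\alpha}\norm f. p.$ and let $c>0$ be chosen below. Using $\norm f. \alpha k . \le M(\alpha k)^{1/\alpha}$ (valid for those $k$ with $\alpha k > 1$; the finitely many remaining terms contribute only a constant depending on $\alpha$),
\[
\mathbb{E}\, e^{\abs{f/(cM)}^\alpha} \;\le\; C_\alpha + \sum_{k \,:\, \alpha k > 1} \frac{(\alpha k)^k}{c^{\alpha k} k!}.
\]
The elementary inequality $k^k \le e^k k!$, obtained by extracting the $j=k$ term from $e^k = \sum_j k^j/j!$, converts the tail into a geometric series with ratio $\alpha e/c^\alpha$. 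Choosing $c$ large enough (depending on $\alpha$) makes the right-hand side at most $2$, whence $\norm f. \operatorname{exp}(L^\alpha) . \le cM$.

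\textbf{Main obstacle.} The only real subtlety is the mismatch between $\psi_\alpha$ and $e^{\abs{x}^\alpha}-1$ near the origin for $\alpha < 1$, where the latter fails to be convex; this is handled by the initial reduction and absorbed into the $\alpha$-dependent implicit constants. The remainder is a routine dictionary between Taylor coefficients of $e^x$ and $L^p$ moments via Stirling.
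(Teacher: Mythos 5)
Your proof is correct. The paper gives no proof of Proposition \ref{p.comparable} (it is stated as ``well-known and follows from elementary methods''), and your Taylor-expansion-plus-Stirling dictionary between the exponential moment $\mathbb E\, e^{\abs{f/K}^{\alpha}}$ and the $L^{\alpha k}$ moments, together with the rescaling that handles the mismatch between $\psi_\alpha$ and $e^{\abs{x}^{\alpha}}-1$ for small $\alpha$, is precisely the standard elementary argument being alluded to.
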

%%%%%%%%%%%%%%%%%%%%%%%%%%%%%% PROPOSITION PROPOSITION PROPOSITION

We shall also make use of the duality relations for the exponential
Orlicz classes. For $ \alpha >0$, let $ \varphi _{\alpha } (x)$ be a
symmetric convex function which equals $ \lvert x\rvert (\log
(3+\lvert x\rvert) ) ^{\alpha } $ for $ \lvert x\rvert $
sufficiently large, depending upon $ \alpha $.\footnote{For $ \alpha
\ge 1$, we can take this as the definition for all $ \lvert
x\rvert\ge 0 $.} The Orlicz space $ L ^{\varphi _{\alpha }}$ is
denoted as $ L ^{\varphi _{\alpha }} = L (\log L) ^{\alpha }$. The
propositions below are standard.

%%%%%%%%%%%%%%%%%%%%%%%%%%%%%% PROPOSITION PROPOSITION PROPOSITION
\begin{proposition}\label{p.dual} For $ 0<\alpha <\infty $,  the two
Orlicz spaces $ \operatorname {exp}(L ^{\alpha })$ and $ L (\log L)
^{1/\alpha }$ are Banach spaces which are dual to one another.
\end{proposition}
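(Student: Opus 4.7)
The plan is to derive this proposition from the general Orlicz-space duality theorem in \cite{MR0500056}: if $\Phi$ and $\Psi$ are a pair of complementary Young functions, related by
\begin{equation*}
\Psi(y) = \sup_{x \ge 0}\bigl(xy - \Phi(x)\bigr),
\end{equation*}
then the pairing $(f,g) \mapsto \mathbb{E}(fg)$ identifies $L^\Psi$ with (an isomorphic copy of) the Banach-space dual of $L^\Phi$, with norms equivalent to the Orlicz norms, provided at least one of $\Phi$, $\Psi$ satisfies the $\Delta_2$ growth condition. The Orlicz norms so defined are equivalent to $\norm{\cdot}.L^\Phi.$ in \eqref{e.psi}, so the only thing specific to our situation that needs to be verified is that $\psi_\alpha$ and $\varphi_{1/\alpha}$ are, up to equivalence at infinity, such a pair.

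The computational core of the argument is a short Legendre transform calculation. First I would differentiate: $\psi_\alpha'(x) \simeq \alpha x^{\alpha-1} e^{x^\alpha}$ for large $x$. Inverting this asymptotic relation, $y = \psi_\alpha'(x)$ gives $\log y \simeq x^\alpha$, hence $(\psi_\alpha')^{-1}(y) \simeq (\log y)^{1/\alpha}$. Integrating,
\begin{equation*}
\Psi(y) \coloneqq \int_0^y (\psi_\alpha')^{-1}(t)\, dt \simeq y (\log y)^{1/\alpha} \qquad (y \text{ large}),
\end{equation*}
which matches the asymptotic definition of $\varphi_{1/\alpha}$. Because both Young functions are specified only up to their values for large argument (and convexified arbitrarily for small $x$), this asymptotic equivalence translates directly into equivalence of the resulting Orlicz norms, which is exactly what is needed to invoke the general theorem with $\Phi = \psi_\alpha$ and $\Psi = \varphi_{1/\alpha}$.

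Finally I would observe that $\varphi_{1/\alpha}(y) \simeq y(\log y)^{1/\alpha}$ satisfies the $\Delta_2$ condition, since $\varphi_{1/\alpha}(2y) \le C_\alpha \, \varphi_{1/\alpha}(y)$ for large $y$. This suffices to identify $\operatorname{exp}(L^\alpha)$ with the full Banach-space dual of $L(\log L)^{1/\alpha}$, proving one of the two duality assertions of Proposition~\ref{p.dual}. The mild obstacle is the asymmetry arising from the failure of $\psi_\alpha$ to satisfy $\Delta_2$: in the reverse direction one obtains only that $L(\log L)^{1/\alpha}$ embeds via the pairing $(f,g)\mapsto \mathbb{E}(fg)$ as a \emph{norming} subspace of $(\operatorname{exp}(L^\alpha))^*$, which is the sense in which the two spaces are ``dual to one another'' as stated.
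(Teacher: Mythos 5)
The paper offers no proof of Proposition~\ref{p.dual}: it is simply cited as standard, so there is no argument of the authors' to compare against. Your proposal is the standard complementary-Young-function proof and is correct: the Legendre computation giving $(\psi_\alpha')^{-1}(y)\simeq(\log y)^{1/\alpha}$ and hence conjugate $\simeq y(\log y)^{1/\alpha}$ is right, and your closing caveat is exactly the right reading of ``dual to one another'' --- since $\psi_\alpha$ fails $\Delta_2$, only $\bigl(L(\log L)^{1/\alpha}\bigr)^*=\operatorname{exp}(L^\alpha)$ is a genuine Banach-space identification, while in the reverse direction one only gets a norming embedding; this one-sided statement, i.e.\ the H\"older-type bound $\abs{\ip f,g,}\lesssim \norm f.\operatorname{exp}(L^\alpha).\,\norm g. L(\log L)^{1/\alpha}.$, is all that the paper actually uses (in the proof of Theorem~\ref{t.lower}).
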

%%%%%%%%%%%%%%%%%%%%%%%%%%%%%% PROPOSITION PROPOSITION PROPOSITION

%In particular, the dual to $ L (\log L)$ is $ \operatorname {exp}(L)$, and
%the dual to $ \operatorname {exp}( L ^{2} )$ is $ L \sqrt {\log L}$.
%We omit the proof of the next Proposition.

%%%%%%%%%%%%%%%%%%%%%%%%%%%%%% PROPOSITION PROPOSITION PROPOSITION
\begin{proposition}\label{p.indicator}  Let $ E$ be a measurable subset of
a probability set.  We have
\begin{equation*}
\norm \mathbf 1_{E} . L (\log L) ^{1/\alpha }.  \simeq \mathbb P (E)
\cdot (1 - \log \mathbb P (E) ) ^{1/\alpha }\,.
\end{equation*}
\end{proposition}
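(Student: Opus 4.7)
The plan is a direct computation from the definition of the Orlicz norm. Let $p = \mathbb{P}(E)$ and let $\varphi = \varphi_{1/\alpha}$ be the defining Young function, so that $\varphi(y) = y(\log(3+y))^{1/\alpha}$ for $y$ sufficiently large. Since $\mathbf{1}_E/K$ takes only the values $0$ and $1/K$, the key identity is
\begin{equation*}
\mathbb{E}\,\varphi(\mathbf{1}_E/K) = p\,\varphi(1/K).
\end{equation*}
Because $\varphi$ is strictly increasing and continuous on $[0,\infty)$ with $\varphi(0)=0$ and $\varphi(\infty)=\infty$, the Orlicz norm $\norm \mathbf{1}_E. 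L(\log L)^{1/\alpha}. = K$ is determined by the equation $\varphi(1/K) = 1/p$.

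I would then split into two regimes. When $p \ge 1/2$, the right-hand side $1/p$ is bounded between $1$ and $2$, so $1/K$ and hence $K$ lies between two constants depending only on $\alpha$; on the other hand, $p(1-\log p)^{1/\alpha}$ is also trapped between two absolute constants in this range, so both quantities are comparable.

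For the main regime $p < 1/2$, I would set $y = 1/K$ and use $\varphi(y) \simeq y(\log y)^{1/\alpha}$ for $y$ large (which is automatic because $\varphi(y) \ge y$ forces $y \ge 1/(2p) \gg 1$). Solving $y(\log y)^{1/\alpha} \simeq 1/p$ by a standard bootstrap argument: the trivial bound $y \le C/p$ gives $\log y \lesssim \log(1/p)$, which inserted back into the equation yields a matching lower bound $y \gtrsim (1/p)(\log(1/p))^{-1/\alpha}$; re-inserting this lower bound shows $\log y \simeq \log(1/p)$, and hence
\begin{equation*}
y \simeq \frac{1}{p}\,(\log(1/p))^{-1/\alpha},\qquad K = 1/y \simeq p\,(\log(1/p))^{1/\alpha}.
\end{equation*}
Since $1-\log p \simeq \log(1/p)$ for $p<1/2$, this gives the claimed equivalence $K \simeq p(1-\log p)^{1/\alpha}$.

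The main (and really only) subtlety is justifying that the bootstrap closes with constants independent of $p$; this hinges on the fact that $\log \log(1/p)$ is a lower-order correction to $\log(1/p)$, so replacing $\log y$ by $\log(1/p)$ in the defining equation changes $y$ only by a bounded factor. Everything else is bookkeeping about the Young function near the origin, which is absorbed into the treatment of the easy regime $p \ge 1/2$.
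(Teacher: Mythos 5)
The paper gives no proof of this proposition at all---it is introduced with the phrase ``The propositions below are standard''---so there is no argument of the authors' to compare yours against; I can only judge the proposal on its own terms, and on those terms it is essentially correct and is surely the intended ``standard'' computation. The reduction to the scalar equation $p\,\varphi(1/K)=1$ via the identity $\mathbb E\,\varphi(\mathbf 1_E/K)=p\,\varphi(1/K)$ (using $\varphi(0)=0$) is right, the split at $p\ge 1/2$ is appropriate (though strictly the threshold for the ``easy'' regime should be allowed to depend on the Young function, since $\varphi_{1/\alpha}$ is only specified for large arguments), and the bootstrap for $y(\log(3+y))^{1/\alpha}=1/p$ closes with absolute constants precisely because $\log\log(1/p)$ is of lower order than $\log(1/p)$. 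One parenthetical is stated backwards, however: from $\varphi(y)\ge y$ and $\varphi(y)=1/p$ you deduce the \emph{upper} bound $y\le 1/p$, not $y\ge 1/(2p)$; indeed the true answer $y\simeq (1/p)\bigl(\log(1/p)\bigr)^{-1/\alpha}$ is strictly smaller than $1/(2p)$ for small $p$, so the inequality as written is false. The fact you actually need there---that $y$ is large enough for the explicit formula for $\varphi$ to apply---follows instead from the boundedness of $\varphi$ on compact sets (so $\varphi(y)=1/p\to\infty$ forces $y\to\infty$), or from the lower bound that your own bootstrap produces one line later. This is a local slip rather than a gap, since the surrounding two-sided bounds are derived correctly and do not rely on it.
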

%%%%%%%%%%%%%%%%%%%%%%%%%%%%%% PROPOSITION PROPOSITION PROPOSITION

%%%%%%%%%%%%%%%%%%%%%%%%%%%%%% SUBSECTION SUBSECTION SUBSECTION SUBSECTION
 %%%%%%%%%%%%%%%%%%%%%%%%%%%%%% SUBSECTION SUBSECTION SUBSECTION SUBSECTION
\subsection*{Chang-Wilson-Wolff Inequality}%\label{ss.}

 Each dyadic interval has a left and right half, $ {I_{\textup{left}}}, {I_{\textup{right}}}$
respectively,  which are also dyadic.  Define the
 Haar function associated with $ I$ by
 \begin{equation*}
h_I \coloneqq -\mathbf 1 _{I_{\textup{left}}}+ \mathbf 1 _{I_{\textup{right}}}
\end{equation*}
 Note that here the Haar functions are normalized in $ L ^{\infty }$.
 In particular, the square function with this normalization has the
 form
 \begin{equation*}
\operatorname S (f) ^2 = \sum _{I\in \mathcal D} \frac {  \ip f,
h_I, ^2 } {\lvert  I\rvert ^2  } \mathbf 1_{I} \,, \qquad\textup{for
}\quad f (x)= \sum _{I} \frac {\ip f, h_I, } {\lvert  I\rvert } h_I
(x).
\end{equation*}

We can now deduce the Chang-Wilson-Wolff inequality.

\begin{cww} For all Hilbert space valued martingales, we have
\begin{equation*}
\norm f . \operatorname {exp}(L ^2 ). \lesssim \norm \operatorname S (f). \infty . \,.
\end{equation*}
\end{cww}

Indeed, we have
\begin{equation*}
\norm f. p. \lesssim \sqrt p \cdot \norm \operatorname S (f).p.
\lesssim \sqrt p \cdot \norm \operatorname S (f). \infty . \,.
\end{equation*}
Taking $ p\to \infty $, and using Proposition~\ref{p.comparable}, we deduce the
inequality above.

\medskip

 In dimension $2 $, a \emph{dyadic rectangle} is a product of dyadic intervals, thus an
element of
 $\mathcal D^2 $.   A Haar function associated to
 $R $ is the product of the Haar functions associated
 with each side of $R $, namely  for $ R_1\times   R_2$,
 \begin{equation*}
 h_{R_1\times R_2 }(x_1 ,x_2) {} \coloneqq \prod _{t=1} ^{2}h _{R_t}(x_t)\,.
 \end{equation*}
 See Figure~\ref{f.haar}.
 Below, we will expand  the definition of Haar functions, so that we can describe
a basis for $ L ^{2} ([0,1] ^2 )$.

%\begin{figure}
%\begin{tikzpicture}
%  %%%%\begin{scope}[ fill opacity=0.5]

%\draw (2.5,2.5) node {$ h_R$};
%\draw (0,0) rectangle (8,2);
%\draw[fill,fill opacity=0.5,lightgray] (0,0) rectangle (4,1);
%\draw[fill,fill opacity=0.5,lightgray] (4,1) rectangle (8,2);

%\draw (6,4.5) node {$ h_S$};
%\draw (4,0) rectangle (8,4);
%\draw[fill,fill opacity=0.5,lightgray] (4,0) rectangle (6,2);
%\draw[fill,fill opacity=0.5,lightgray] (6,2) rectangle (8,4);
%  %%%%\end{scope}
%\end{tikzpicture}
%\caption{Two Haar functions. }
%\label{f.haar}
%\end{figure}

We will concentrate on rectangles of a fixed volume, contained in $[0,1]^2 $. The notion of the square function is also useful in the two dimensional
  context.  It has the form
   \begin{equation} \label{e.haar2}
\operatorname S (f) ^2 = \sum _{R\in \mathcal D ^2} \frac { \ip f
,h_R , ^2 } {\lvert  R\rvert ^2  } \mathbf 1_{R} \,, \qquad \textup
{ for } \quad f (x)= \sum _{R\in \mathcal D ^2 }  \frac { \ip f ,h_R
,} { {\lvert R\rvert }} h_R (x)\,.
\end{equation}
Jill Pipher \cite{MR850744} observed the following extension of the Chang-Wilson-Wolff inequality.

\begin{2cww}  For functions $ f$ in the plane as in \eqref{e.haar2} we have
\begin{equation*}
\norm f . \operatorname {exp} (L) .  \lesssim \norm \operatorname S (f) . \infty . \,.
\end{equation*}
\end{2cww}

Namely, in the case of two-parameters, the exponential integrability
has been reduced by a factor of two.  This follows from a two-fold
application of the Littlewood-Paley inequalities, with best
constants, for Hilbert space valued functions.  Details can be found
in \cites{MR850744,MR1439553,math.CA/0609815}. In fact, we will need
the following variant.

%%%%%%%%%%%%%%%%%%%%%%%%%%%%%% THEOREM THEOREM THEOREM
\begin{theorem}\label{t.one}  Let $ n\ge 1$ be an integer.
Suppose that $ f$ on the plane has the expansion
\begin{equation*}
f = \sum _{\substack{R\in \mathcal D ^2\\ \lvert  R\rvert = 2 ^{-n}  } }
\frac { \ip f ,h_R ,} {{\lvert  R\rvert }}  h_R \,.
\end{equation*}
That is, $ f$ is in the linear span of Haar functions with a fixed volume.  Then, we
have the estimate
\begin{equation*}
\norm f . \operatorname {exp} (L ^2 ) . \lesssim  \norm S (f) . \infty . \,.
\end{equation*}
\end{theorem}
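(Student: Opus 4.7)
The plan is to exploit the fixed-volume hypothesis to reduce the two-parameter estimate to a one-parameter Chang-Wilson-Wolff-type inequality. The key observation is that, when all rectangles have common volume $ 2 ^{-n}$, the two-parameter square function $ \operatorname S (f)$ from \eqref{e.haar2} agrees pointwise with the one-dimensional dyadic square function of $ f(\cdot ,x_2)$ taken in the first variable, uniformly in $ x_2$. Granted this identity, the scalar martingale Khintchine estimate of Theorem \ref{t.martKhintchine} together with Proposition \ref{p.comparable} will finish the argument.

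To establish the identity, I decompose $ f = \sum _{k=0} ^{n} G_k$, where $ G_k$ collects the Haar summands with $ \lvert R_1\rvert = 2 ^{-k}$, and correspondingly $ \lvert R_2\rvert = 2 ^{-(n-k)}$. For each fixed $ k$, the rectangles appearing in $ G_k$ have a common shape and partition $ [0,1] ^{2}$, so for every $ (x_1 ,x_2)$ exactly one of them --- namely $ R_1 (x_1 ,k) \times R_2 (x_2 ,n-k)$ --- contains the point. Writing $ c_R = \lvert R\rvert ^{-1}\ip f, h_R,$, evaluation collapses $ G_k (x_1 ,x_2)$ to a single term of modulus $ \lvert c _{R_1 (x_1 ,k) \times R_2 (x_2 ,n-k)}\rvert$, and summing over $ k$ yields
\begin{equation*}
\sum _{k=0} ^{n} G_k (x_1 ,x_2) ^{2} = \sum _{R} c_R ^{2} \mathbf 1_{R} (x_1 ,x_2) = \operatorname S (f) (x_1 ,x_2) ^{2}\,.
\end{equation*}
At the same time, for each fixed $ x_2$, the functions $ G_k (\cdot ,x_2)$ are precisely the martingale differences of $ f (\cdot ,x_2)$ relative to the dyadic filtration on $ [0,1]$, so the left-hand side above is also the one-dimensional square function $ \operatorname S (f (\cdot ,x_2)) (x_1) ^{2}$.

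With this identification I apply Theorem \ref{t.martKhintchine} to the scalar martingale $ f (\cdot ,x_2)$ for each fixed $ x_2$, raise both sides to the $ p$th power, and integrate in $ x_2$ to obtain
\begin{equation*}
\norm f. p. \lesssim \sqrt p \,\norm \operatorname S (f). p. \le \sqrt p\, \norm \operatorname S (f). \infty .\,,\qquad 2\le p <\infty \,.
\end{equation*}
Dividing by $ \sqrt p$, taking the supremum over $ p>1$, and invoking Proposition \ref{p.comparable} produces the bound $ \norm f . \operatorname{exp}(L ^{2}) . \lesssim \norm \operatorname S (f) . \infty .$, which is the desired inequality.

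The only genuinely subtle step is the pointwise identification of the two- and one-parameter square functions; the remainder is routine use of the martingale inequality and the equivalence of the exponential Orlicz norm with $ \sup _{p} p ^{-1/\alpha } \norm f. p. $. The identity fails without the fixed-volume hypothesis: in general each $ G_k$ is a nontrivial superposition of Haar functions at many scales in $ x_2$, forcing an iterated application of the Khintchine inequality in both variables and costing an extra factor of $ \sqrt p$ --- this is exactly why the full two-parameter Chang-Wilson-Wolff inequality of Pipher only delivers the weaker $ \operatorname{exp}(L)$ bound.
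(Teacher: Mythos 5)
Your proposal is correct and is essentially the paper's own argument: the paper disposes of Theorem \ref{t.one} in one sentence by ``applying the one-parameter Chang--Wilson--Wolff inequality in the $x_1$ variable, holding $x_2$ fixed,'' and your fiber-wise identification of $\operatorname S(f)(\cdot,x_2)$ with the one-dimensional square function of $f(\cdot,x_2)$ (valid precisely because the fixed volume makes $|R_2|$ a function of $|R_1|$, so each fiber picks out one coefficient per $R_1$) is exactly the detail being suppressed. Running the estimate through the $L^p$ bounds of Theorem \ref{t.martKhintchine} and Proposition \ref{p.comparable}, rather than invoking the $\operatorname{exp}(L^2)$ form fiber-wise, is the same mechanism the paper itself uses to derive the one-parameter inequality, and it cleanly handles the integration in $x_2$.
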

%%%%%%%%%%%%%%%%%%%%%%%%%%%%%% THEOREM THEOREM THEOREM

Thus, if $ f$ is in the linear span of a `one-parameter' family of
rectangles, we regain the exponential-squared integrability. The
proof is straightforward.  As the volumes of the rectangles are
fixed, one need only apply the one-parameter Chang-Wilson-Wolff
inequality in, say, the $ x_1$ variable, holding the $ x_2$ variable
fixed.

The following simple proposition reduces the proof of Theorem
\ref{t.vdc} to the case $\alpha=2$.
%%%%%%%%%%%%%%%%%%%%%%%%%%%%%% PROPOSITION PROPOSITION PROPOSITION
\begin{proposition}\label{p.AA}  Suppose that for $ A\ge 1$, we have
\begin{equation*}
\norm f. \operatorname {exp}( L ^2)  . \le \sqrt A\,, \qquad \norm f. \infty . \le A \,.
\end{equation*}
It follows that
\begin{equation*}
\norm f. \operatorname {exp} (L ^{\alpha }). \le A ^{ 1- 1/ \alpha }
\,, \qquad 2\le \alpha < \infty \,.
\end{equation*}
\end{proposition}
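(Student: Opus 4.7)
The plan is to reduce everything to the equivalence of norms in Proposition~\ref{p.comparable}, which expresses the $\operatorname{exp}(L^\alpha)$ norm as $\sup_{p>1} p^{-1/\alpha} \norm f.p.$. The two hypotheses then translate directly into two bounds on $L^p$ norms: first, $\norm f.\operatorname{exp}(L^2). \le \sqrt A$ combined with Proposition~\ref{p.comparable} (at $\alpha = 2$) gives $\norm f.p. \lesssim \sqrt{pA}$ for all $p\ge 1$; second, $\norm f.\infty. \le A$ trivially yields $\norm f.p. \le A$ for all $p$. So $\norm f.p. \lesssim \min\bigl(\sqrt{pA},\, A\bigr)$.

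With these two bounds in hand, I would compute $p^{-1/\alpha}\norm f.p.$ by splitting the supremum at the crossover $p = A$. For $p \le A$, use the square-function bound to get
\begin{equation*}
p^{-1/\alpha} \norm f.p. \lesssim p^{1/2 - 1/\alpha} \sqrt A \le A^{1/2-1/\alpha} \sqrt A = A^{1 - 1/\alpha},
\end{equation*}
where the inequality $p^{1/2-1/\alpha} \le A^{1/2-1/\alpha}$ uses precisely the hypothesis $\alpha \ge 2$, so that the exponent $1/2 - 1/\alpha$ is nonnegative. For $p \ge A$, use the trivial $L^\infty$ bound to get
\begin{equation*}
p^{-1/\alpha} \norm f.p. \le p^{-1/\alpha} A \le A^{-1/\alpha} A = A^{1-1/\alpha}.
\end{equation*}
Taking the supremum over all $p>1$ and applying Proposition~\ref{p.comparable} again in the reverse direction then gives $\norm f.\operatorname{exp}(L^\alpha). \lesssim A^{1-1/\alpha}$.

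There is no real obstacle here; the whole argument is a classical Marcinkiewicz-style interpolation between the $\operatorname{exp}(L^2)$ and $L^\infty$ endpoints, and the exponent $1-1/\alpha$ arises exactly as the convex combination $(1 - \theta)\cdot \tfrac12 + \theta \cdot 1$ with $\theta = 1 - 2/\alpha$. The only thing to watch is that $\alpha\ge 2$ is needed to make the exponent $1/2 - 1/\alpha$ nonnegative in the small-$p$ regime; otherwise the bound $(p/A)^{1/2 - 1/\alpha}\le 1$ would fail. If one cares about the absolute constant in the statement (rather than a $\lesssim$), one would absorb the constants hidden in Proposition~\ref{p.comparable} into the definition of $\psi_\alpha$, but this is a cosmetic issue.
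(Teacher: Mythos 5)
Your argument is correct. The paper itself offers no proof of Proposition~\ref{p.AA} (it is labelled ``simple'' and left to the reader), so there is nothing to match line by line; your route through Proposition~\ref{p.comparable} is a legitimate interpolation between the $\operatorname{exp}(L^2)$ and $L^\infty$ endpoints, the case split at $p=A$ is the right one, and you correctly identify $\alpha\ge 2$ as the point where the exponent $1/2-1/\alpha$ must be nonnegative. The one caveat you already flag is real: passing through Proposition~\ref{p.comparable} twice costs an absolute constant, so you prove $\norm f.\operatorname{exp}(L^\alpha). \lesssim A^{1-1/\alpha}$ rather than the constant-free inequality as literally stated (harmless for the application to Theorem~\ref{t.vdc}). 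If you want the clean constant, there is a shorter pointwise argument that avoids Proposition~\ref{p.comparable} entirely: from $\norm f.\infty. \le A$ one has $\abs{f}^{\alpha} \le A^{\alpha-2}\abs{f}^{2}$ almost surely, so with $K=A^{1-1/\alpha}$,
\begin{equation*}
\frac{\abs{f}^{\alpha}}{K^{\alpha}} = \frac{\abs{f}^{\alpha}}{A^{\alpha-1}} \le \frac{\abs{f}^{2}}{A}\,,
\qquad\text{hence}\qquad
\mathbb E\bigl[\operatorname e^{\abs{f}^{\alpha}/K^{\alpha}}-1\bigr] \le \mathbb E\bigl[\operatorname e^{\abs{f}^{2}/A}-1\bigr]\le 1
\end{equation*}
by the hypothesis $\norm f.\operatorname{exp}(L^2).\le \sqrt A$ and the definition \eqref{e.psi}; this yields $\norm f.\operatorname{exp}(L^\alpha).\le A^{1-1/\alpha}$ exactly (using that for $\alpha\ge 2$ one may take $\psi_\alpha(x)=\operatorname e^{\abs x^\alpha}-1$ for all $x$). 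Both proofs use $\alpha\ge2$ in the same essential way; yours has the advantage of making the Marcinkiewicz-type interpolation structure explicit, while the pointwise one is what delivers the statement with constant $1$.
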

%%%%%%%%%%%%%%%%%%%%%%%%%%%%%% PROPOSITION PROPOSITION PROPOSITION
%%%%%%%%%%%%%%%%%%%%%%%%%%%%%% SUBSECTION SUBSECTION SUBSECTION SUBSECTION
 %%%%%%%%%%%%%%%%%%%%%%%%%%%%%% SUBSECTION SUBSECTION SUBSECTION SUBSECTION
\subsection*{Bounded Mean Oscillation}%\label{ss.}

We recall facts about dyadic $BMO$ spaces, see \cites{cf1,MR584078}.

 We need to subtract some terms from $D_N$, as it
is not necessarily in the span of the Haar functions as we have
defined them. The deficiency is that standard Haar functions on the
unit square have zero means in both directions. Hence, for a dyadic
interval $ I\in \mathcal D$, we also need to consider
\begin{equation*}
h ^{1}_I = \mathbf 1_{I} = \lvert  h_I\rvert\,.
\end{equation*}
And set $ h^0_I=h_I$, where `$ 0$' stands for `zero integral' and `$
1$' for `non-zero integral.' In the plane, for $ \epsilon _1\,, \,
\epsilon _2\in \{0,1\}$ set
\begin{equation} \label{e.01}
h_{R_1 \times R_2} ^{\epsilon _1, \epsilon _2}(x_1, x_2) = \prod _{j=1} ^{2}
h ^{\epsilon _j} _{R_j} (x_j)\,.
\end{equation}
We will sometimes write $h_R=h_R ^{0,0}$ in order to simplify our
notation. With these definitions we have the following
{\emph{orthogonal}} basis for $ L ^2 ([0,1] ^2 )$.
\begin{equation*}
\{ h ^{1,1} _{[0,1] ^2 } \}
\cup
\{ h ^{1,0} _{[0,1] \times I}\,,\,
h ^{1,0} _{I\times [0,1] }\mid I\in \mathcal D\}
\cup
\{h_R ^{0,0} \mid R\in \mathcal D ^2 \}\,.
\end{equation*}
%Thus, we have adequately accounted for all the $ 1$'s we need to obtain a basis. We will sometimes write $h_R=h_R ^{0,0}$ in order to simplify our notation.
There are  couple of different $ \operatorname {BMO}$ spaces that
are relevant here. Let us begin with the variants of the more
familiar C.~Fefferman, one-parameter, dyadic $ \operatorname {BMO}$
spaces.

%%%%%%%%%%%%%%%%%%%%%%%%%%%%%%  DEFINITION DEFINITION DEFINITION
\begin{definition}\label{d.f}  Define the space $ \operatorname {BMO}_1$
to be those square integrable functions $ f$ in the span of $\{ h
^{0,1} _{I \times [0,1]} \mid I \in \mathcal D \}$ which satisfy
\begin{equation}\label{e.1}
\norm f. \operatorname {BMO}_1 . \coloneqq
\sup _{J\in \mathcal D} \Bigl[ \lvert  J\rvert ^{-1} \sum _{\substack{I\in \mathcal D\\ I\subset J}}
\frac {\ip f ,h ^{0,1} _{I \times [0,1]} , ^2  } {\lvert  I\rvert }
 \Bigr] ^{1/2} < \infty \,.
\end{equation}
Define $ \operatorname {BMO}_2$ similarly, with the roles of the first and second coordinate
reversed.
\end{definition}
%%%%%%%%%%%%%%%%%%%%%%%%%%%%%%  DEFINITION DEFINITION DEFINITION

%%%%%%%%%%%%%%%%%%%%%%%%%%%%%%  DEFINITION DEFINITION DEFINITION
\begin{definition}\label{d.cf}  Dyadic Chang-Fefferman $ \operatorname {BMO}_{1,2}$ is defined to be
those square integrable functions $ f$ in the linear span of $ \{h_R \mid R\in \mathcal D ^2
\}$, for which we have
\begin{equation}\label{e.12}
\norm f. \operatorname {BMO}_{1,2} . \coloneqq \sup _{ U\subset
[0,1] ^2 } \Bigl[ \lvert  U\rvert ^{-1} \sum _{\substack{R \in
\mathcal D ^2
\\ R\subset U}}
\frac {\ip f ,h _R , ^2  } {\lvert  R\rvert }
 \Bigr] ^{1/2} < \infty \,.
\end{equation}
We stress that the supremum is over {\em all} measurable subsets $
U\subset [0,1] ^2 $, not just rectangles.
\end{definition}
%%%%%%%%%%%%%%%%%%%%%%%%%%%%%%  DEFINITION DEFINITION DEFINITION

It is well-known that these `uniform square integrability'
conditions imply that the corresponding functions enjoy higher
moments.  This is usually phrased as the John-Nirenberg
inequalities, which we state here in their sharp exponential form.

\begin{jn}  We have the following estimate for $ f\in \operatorname {BMO} _{1}$,
and $ \varphi \in \operatorname {BMO} _{1,2}$.
\begin{align}\label{e.jn1}
\norm f. \operatorname {exp} (L). &\lesssim \norm f . \operatorname {BMO} _{1}.
\\ \label{e.jn12}
\norm \varphi . \operatorname {exp} (\sqrt L). &\lesssim \norm \varphi  . \operatorname {BMO} _{1,2}.
\end{align}
%The corresponding inequality for $ \operatorname {BMO} _{2}$, namely \eqref{e.jn1}, with
%the role of the first and second coordinates reversed, also holds.
\end{jn}

Note that in the second inequality, \eqref{e.jn12}, the number of
parameters has doubled, hence the exponential integrability has
decreased by a factor of two. Of course, if the square function of $ f$ is bounded, one sees
immediately that the functions are necessarily in $ BMO$.  And in
this circumstance  the Chang-Wilson-Wolff inequalities give an
essential strengthening of the
 John-Nirenberg  estimates.

\bigskip

%%%%%%%%%%%%%%%%%%%%%%%%%%%%%% SUBSECTION SUBSECTION SUBSECTION SUBSECTION
 %%%%%%%%%%%%%%%%%%%%%%%%%%%%%% SUBSECTION SUBSECTION SUBSECTION SUBSECTION
\subsection*{Discrepancy}%\label{ss.}

Below, we will refer to the two parts of the Discrepancy function as
the `linear' and the `counting' part.  Specifically, they are
\begin{align}\label{e.linear}
L_N (\vec x) &= N x_1 \cdot x_2 \,,
\\  \label{e.counting}
C_ {\mathcal P} (\vec x) &= \sum _{\vec p \in \mathcal P} \mathbf 1_{[\vec p, \vec 1) } (\vec x) \,.
\end{align}
Here, $ \mathcal P$ is the subset of the unit square of cardinality
$ N$. %We will sometimes write $D_N(\mathcal P)$ when we want to
%emphasize the dependence on the point distribution $\mathcal P$ as
%is the case when we seek upper bounds for the discrepancy function
%in terms of specific constructions of sets $\mathcal P$.
In proving upper bounds on the Discrepancy function, one of course
needs to capture a cancellation between these two, that is large
enough to nearly completely cancel the nominal normalization by $
N$.

We recall some definitions and facts about Discrepancy
which  are well represented in the literature, and apply to general selection of
point sets, see
\cites{MR0066435,MR554923,MR903025}.

We call a function $f$ an \emph{$\mathsf r$ function  with parameter
$ \vec r= (r_1 , r_2)$} if $ \vec r\in \mathbb N ^{2}$, and
\begin{equation}	
\label{e.rfunction} f=\sum_{ R \in \mathcal R _{\vec r}}
\varepsilon_R\, h_R\,,\qquad \varepsilon_R\in \{\pm1\}\, ,
\end{equation}
where we set $ \mathcal R _{\vec r} \coloneqq \{ R= R_1 \times R_2
\mid R\in \mathcal D^2\,, R\subset [0,1]^2\, ,\ \lvert  R_t\rvert= 2
^{- r_t}\,, \ t=1,2\}\,.$
 We will use $f _{\vec r} $ to denote a
generic $\mathsf r$ function. A fact used without further comment is
that $ f _{\vec r} ^2 \equiv 1$.

Let $ \abs{ \vec r}= \sum _{t=1} ^{2} r_t=n$, which we refer to as
the index of the $ \mathsf r$ function. And let $\mathbb H _n ^{2}
\coloneqq \{\vec r \in \{0,1 ,\dotsc, n\} ^2
 \mid \abs{ \vec r}=n\}$, i.e., the set of all $\vec r \,$'s such that
 rectangles in $\mathcal R_{\vec r}$ have  area $2^{-n}$.
It is fundamental to the subject that $\sharp \mathbb H _n ^{2}  =
n+1 $. We refer to  $ \{f _{\vec r} \mid r\in \mathbb H _n ^{2}\}$
as hyperbolic $ \mathsf r$ functions. The next four Propositions are
standard.

%%%%%%%%%%%%%%%%%%%  Lemma
\begin{proposition}\label{p.rvec}
For any selection  $ \mathcal A_N$ of $ N$ points in the unit cube
the following holds. Fix $ n$ with $ 2N< 2 ^{n}\le 4N$. For each
$\vec r\in \mathbb H_{n} ^{2}$, there is an $\mathsf r$ function $f
_{\vec r} $ with
\begin{equation*}
\ip D_N, f _{\vec r}, \gtrsim 1\,.
\end{equation*}
\end{proposition}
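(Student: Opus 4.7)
The plan is to use the classical Roth test-function construction, which produces an $\mathsf r$-function correlating with $D_N$ by exploiting the product structure of the Haar functions against both the linear and counting parts.

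The starting point is the computation of $\langle D_N, h_R\rangle$ for a single $R=R_1\times R_2\in\mathcal R_{\vec r}$. For the linear part $L_N(\vec x)=Nx_1x_2$, the tensor structure of $h_R$ gives
\begin{equation*}
\Ip L_N, h_R, = N\int_{R_1} x_1 h_{R_1}(x_1)\,\d x_1 \int_{R_2} x_2 h_{R_2}(x_2)\,\d x_2 = \frac{N\lvert R\rvert^2}{16},
\end{equation*}
since $\int_I x\,h_I(x)\,\d x = \lvert I\rvert^2/4$ for any dyadic interval $I$. Because $\lvert R\rvert=2^{-n}$, this equals $N2^{-2n}/16$, and the sign is positive. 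For the counting part, the crucial observation is the following: for a single point $\vec p = (p_1,p_2)\in\mathcal A_N$,
\begin{equation*}
\Ip \mathbf 1_{[\vec p,\vec 1)}, h_R, = \left(\int_{R_1}\mathbf 1_{p_1\le x_1} h_{R_1}\,\d x_1\right)\left(\int_{R_2}\mathbf 1_{p_2\le x_2} h_{R_2}\,\d x_2\right),
\end{equation*}
and each factor vanishes unless the corresponding $p_t$ lies in $R_t$, because if $p_t\notin R_t$ then $\mathbf 1_{p_t\le x_t}$ is constant on $R_t$ and is annihilated by $h_{R_t}$. Consequently $\ip C_\mathcal P, h_R, =0$ whenever $R$ contains no point of $\mathcal A_N$, so for such empty rectangles
\begin{equation*}
\Ip D_N, h_R, = -\frac{N\lvert R\rvert^2}{16} = -\frac{N\cdot 2^{-2n}}{16}.
\end{equation*}

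Now I set $\varepsilon_R\coloneqq-\operatorname{sign}\ip D_N,h_R,$ (with an arbitrary choice when the inner product vanishes), so that $\varepsilon_R \ip D_N,h_R,\ge 0$ for every $R\in\mathcal R_{\vec r}$. Then
\begin{equation*}
\Ip D_N, f_{\vec r}, = \sum_{R\in\mathcal R_{\vec r}} \varepsilon_R \Ip D_N, h_R, \ge \sum_{\substack{R\in\mathcal R_{\vec r}\\ R\cap\mathcal A_N=\emptyset}} \frac{N\cdot 2^{-2n}}{16}.
\end{equation*}
The family $\mathcal R_{\vec r}$ consists of exactly $2^n$ disjoint rectangles tiling $[0,1]^2$, so at most $N$ of them meet $\mathcal A_N$. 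Since by hypothesis $2N<2^n$, at least $2^n - N > 2^n/2$ rectangles are empty, yielding
\begin{equation*}
\Ip D_N, f_{\vec r}, \ge \frac{2^n}{2}\cdot\frac{N\cdot 2^{-2n}}{16} = \frac{N\cdot 2^{-n}}{32} \gtrsim 1,
\end{equation*}
where the last bound uses $2^n\le 4N$. This delivers the claim.

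There is no real obstacle here; the only step requiring care is verifying that the counting part contributes $0$ on empty rectangles, which rests on the factorization of $\mathbf 1_{[\vec p,\vec 1)}$ into a product of one-dimensional indicators together with the cancellation property of $h_{R_t}$ against locally constant functions. Both the upper bound on $N$ (ensuring enough empty rectangles) and the lower bound on $N$ (ensuring the per-rectangle contribution is not too small) are used, and they balance exactly because $2^n\simeq N$.
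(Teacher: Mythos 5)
Your argument is correct and is essentially the paper's own proof: the same computation $\int_I x\,h_I(x)\,\d x=\tfrac14\abs{I}^2$ for the linear part, the same orthogonality of the counting part to $h_R$ on rectangles disjoint from $\mathcal A_N$, and the same count that at least $2^n-N\gtrsim 2^n$ of the $2^n$ rectangles in $\mathcal R_{\vec r}$ are empty, each contributing $N\abs{R}^2/16\simeq 2^{-n}$. The only blemish is a sign slip in the definition of the coefficients: with $\varepsilon_R=-\operatorname{sign}\ip D_N,h_R,$ you would get $\varepsilon_R\ip D_N,h_R,=-\abs{\ip D_N,h_R,}\le 0$, so you clearly intend $\varepsilon_R=+\operatorname{sign}\ip D_N,h_R,$ (the paper's choice), with which the rest of your inequality chain is consistent.
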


%%%%%%%%%%%%%%%%%%%%%%%%%%%%%% PROOF PROOF PROOF
 \begin{proof}
 There is a very elementary one dimensional fact: for all dyadic intervals $I$,
 \begin{equation} \label{e.veryelementary}
 \int _{0}  ^{1 } x   \cdot   h _{I}(x) \; dx =\tfrac 14 \abs{ I} ^2  \,.
 \end{equation}
 This immediately implies that
 \begin{equation} \label{e.vv}
 \langle  x_1\cdot x_2\, , \,h_R ^{0,0}(x_1, x_2) \rangle = 4 ^ {-2}\abs{ R} ^2  \,.
 \end{equation}
 Thus, the inner product with the linear part of the Discrepancy function is completely
 straightforward. We have $ \ip L , h_R ^{0,0}, \ge 4 ^{-2} N \lvert  R\rvert ^2
 \ge 4 \lvert  R\rvert $ for $R \in \mathcal R_{\vec r}$ with $\vec r \in \mathbb H_n^2$.

 %  Recall that $\mathcal A_N$, the distribution of
 %$N$ points in the unit cube, is fixed.
 Call a rectangle $R\in \mathcal R _{\vec r}$
 \emph{good} if $R$ does {\bf not} intersect $\mathcal A_N$, otherwise call it \emph{bad}.
  Set
  \begin{equation}  \label{e.f_r}
  f _{\vec r} {} \coloneqq \sum _{R\in \mathcal R _{\vec r}}
  \operatorname {sgn} (\ip D_N, h_R,) h_R \,.
  \end{equation}
  Each bad rectangle contains at least one point in $\mathcal A_N$, and  $2^ n\ge2N$, so
  there are at least $N$ good rectangles.  Moreover, one should observe that the counting function
 $\sharp (\mathcal A_N\cap [0,\vec x))$ is orthogonal to $ h_R$ for each good rectangle $
 R$.  That is,
\begin{equation*}
\ip C_ {\mathcal A_N}, h ^{0,0}_R, =0\,, \qquad
\textup{whenever}\quad R\cap \mathcal A_N=\emptyset\,.
\end{equation*}
Critical to this property is the fact that Haar functions have mean
zero on each line parallel to the coordinate axes.

 Thus, by \eqref{e.vv}, for a {\em good} rectangle $R\in \mathcal R _{\vec
 r}$ we have
 \begin{equation*}
 \ip D_N, h_R,=-\ip L_N, h_R,=-N\ip   \abs{ [0,\vec x)} , h_R(\vec x),=-N2 ^ {-2n-4}
\lesssim - 2  ^{-n}\,.
 \end{equation*}
 Hence, to complete the proof, we can estimate
 \begin{align*}
 \ip D_N, f _{\vec r}, \ge
 \sum _{\substack{R\in \mathcal R _{\vec r}\\ \text {$R$ is good} } }
 \lvert \ip D_N, h_R, \rvert \gtrsim 2^  {-n} \sharp \{
 R\in \mathcal R _{\vec r}\mid \text {$R$ is good} \}
 \gtrsim 1\,.
 \end{align*}

 \end{proof}
% %%%%%%%%%%%%%%%%%%%%%%%%%%%%%% PROOF PROOF PROOF

%%%%%%%%%%%%%%%%%%%%%%%%%%%%%% PROPOSITION PROPOSITION PROPOSITION
\begin{proposition}\label{p.>n} Let $ f _{\vec s}$ be any $ \mathsf r$ function
with $ \abs{ \vec s}>n$.  We have
\begin{equation*}
\abs{ \ip D_N, f _{\vec s}, } \lesssim N {2 ^{-\abs{ \vec s}}}\,.
\end{equation*}

\end{proposition}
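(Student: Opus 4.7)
The plan is to split $D_N = C_{\mathcal{A}_N} - L_N$ and estimate $\langle C_{\mathcal{A}_N}, f_{\vec s}\rangle$ and $\langle L_N, f_{\vec s}\rangle$ separately, using the fact that all $R \in \mathcal{R}_{\vec s}$ have the same volume $\abs{R} = 2^{-\abs{\vec s}}$ and that these rectangles partition $[0,1]^2$. Note that the hypothesis $\abs{\vec s} > n$ is not actually used in the bound; it is relevant only insofar as it makes the bound $N \cdot 2^{-\abs{\vec s}}$ meaningfully small.

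For the linear part, I would recycle \eqref{e.vv}: each Haar coefficient equals $\langle L_N, h_R\rangle = \tfrac{N}{16} \abs{R}^2$. There are exactly $2^{\abs{\vec s}}$ rectangles in $\mathcal{R}_{\vec s}$, so regardless of the choice of signs $\varepsilon_R$,
\begin{equation*}
\Abs{\ip L_N, f_{\vec s},} \le \sum_{R\in \mathcal{R}_{\vec s}} \frac{N}{16} \abs{R}^2 = \frac{N}{16} \cdot 2^{\abs{\vec s}} \cdot 2^{-2\abs{\vec s}} = \frac{N}{16} 2^{-\abs{\vec s}}.
\end{equation*}

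For the counting part, the key computation is a pointwise bound on $\langle \mathbf{1}_{[\vec p, \vec 1)}, h_R\rangle$ for a single point $\vec p \in \mathcal{A}_N$. Since $h_R = h_{R_1}(x_1) h_{R_2}(x_2)$ tensorizes and the integration region $\{\vec x \ge \vec p\}$ is also a product, this coefficient factors as
\begin{equation*}
\Bigl(\int_{R_1 \cap [p_1,\infty)} h_{R_1}\Bigr)\Bigl(\int_{R_2 \cap [p_2,\infty)} h_{R_2}\Bigr).
\end{equation*}
Each factor vanishes unless $p_t \in R_t$ (mean-zero of $h_{R_t}$ kills the case $p_t$ lies to the left of $R_t$, and disjoint support kills the case $p_t$ lies to the right), and in the remaining case each factor is bounded by $\abs{R_t}/2$ in absolute value. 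Thus $\abs{\ip \mathbf{1}_{[\vec p, \vec 1)}, h_R,} \le \tfrac{\abs{R}}{4} \mathbf{1}_{\vec p \in R}$ for each point, which confirms the orthogonality used in Proposition~\ref{p.rvec} (good rectangles give zero) and produces a bound on bad ones.

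Summing over $\vec p \in \mathcal{A}_N$ and then over $R \in \mathcal{R}_{\vec s}$, and using that $\mathcal{R}_{\vec s}$ partitions $[0,1]^2$ so $\sum_{R} \sharp(\mathcal{A}_N \cap R) = N$,
\begin{equation*}
\Abs{\ip C_{\mathcal{A}_N}, f_{\vec s},} \le \sum_{R\in \mathcal{R}_{\vec s}} \frac{\abs{R}}{4} \sharp(\mathcal{A}_N \cap R) = \frac{2^{-\abs{\vec s}}}{4}\cdot N.
\end{equation*}
Adding the two estimates gives $\abs{\ip D_N, f_{\vec s},} \lesssim N \cdot 2^{-\abs{\vec s}}$, as claimed. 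There is no real obstacle here; the only slightly delicate point is the case analysis for $\int h_{R_t}$ over a half-line, which is really just the observation that Haar coefficients of indicators of upper-right quadrants are supported on rectangles containing the quadrant's corner.
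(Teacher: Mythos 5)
Your proof is correct and follows essentially the same route as the paper: split off the linear part and bound it via the explicit Haar coefficient \eqref{e.vv}, then observe that each point of $\mathcal A_N$ has nonzero inner product with at most one Haar function in the sum (the rectangle containing it), each such inner product being at most $\abs{R}/4 = 2^{-\abs{\vec s}}/4$, and sum over the $N$ points. Your write-up is somewhat more detailed than the paper's (which simply asserts the single-rectangle observation and the resulting bound), but there is no substantive difference.
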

%%%%%%%%%%%%%%%%%%%%%%%%%%%%%% PROPOSITION PROPOSITION PROPOSITION

% %%%%%%%%%%%%%%%%%%%%%%%%%%%%%% PROOF PROOF PROOF
 \begin{proof}
 This is a brute force proof.  Consider the linear part of the Discrepancy function.
 By (\ref{e.veryelementary}), we have
 \begin{equation*}
 \abs{ \ip  L_N, f _{\vec s}, }\lesssim N 2 ^{-\abs{ \vec s}} \,,
 \end{equation*}
 as claimed.

 Consider the part of the Discrepancy function that arises from the point set.
 Observe that for any point $\vec x_0$ in the point set, we have
 \begin{equation*}
 \abs{ \ip \mathbf 1 _{[\vec 0, \vec x_0)} , f _{\vec s}, } \lesssim 2 ^{- \abs{ \vec
 s}}\,.
 \end{equation*}
 Indeed, of the different Haar functions that contribute to $ f _{\vec s}$, there
 is at most one with non zero inner product with the function
 $\mathbf 1 _{[\vec 0, \vec x)} (\vec x_0) $ as a function of $ \vec x$.  It is the
 one rectangle which contains $ x_0$ in its interior. Thus the inequality above follows.
 Summing it over the $ N$ points in the point set completes the proof of the Proposition.
 \end{proof}
% %%%%%%%%%%%%%%%%%%%%%%%%%%%%%% PROOF PROOF PROOF
%%%%%%%%%%%%%%%%%%%%%%%%%%%%%% PROPOSITION PROPOSITION PROPOSITION
\begin{proposition}\label{p.Not2Many}
In dimension $ d=2$ the following holds. Fix a collection of $
\mathsf r$ functions $\{ f _{\vec r} \mid \vec r \in \mathbb H_{n}
^{2} \}$. Fix an integer $ 2\le v \le n$ and $ \vec s $ with $0\leq s_1,s_2 \leq n$  and $\abs{
\vec s}\ge n+ v -1$. Let $ \operatorname {Count} (\vec s ; v) $ be
{the number of ways to  choose distinct $ \vec r_1 ,\dotsc, \vec r_v\in
\mathbb H _n ^2 $  so that $ \prod _{w=1} ^{v} f _{\vec r_w}$ is an
$ \vec s$ function.} We have

\begin{equation} \label{e.Not2Many}
\operatorname {Count} (\vec s ; v) =  {\abs {\vec s}-n-1 \choose v
-2 }\,.
\end{equation}
\end{proposition}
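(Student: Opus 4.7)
The plan is to pin down exactly when the product $\prod_{w=1}^{v} f_{\vec r_w}$ of distinct hyperbolic $\mathsf r$ functions is itself an $\vec s$ function, and then to convert this into a binomial count. The first structural observation I would make is that any $\vec r \in \mathbb H_n^2$ is determined by its first coordinate $a = r_1 \in \{0, 1, \ldots, n\}$, since $r_2 = n - a$. So distinct $\vec r_1, \ldots, \vec r_v$ correspond to a $v$-element subset $\{a_1 < a_2 < \cdots < a_v\}$ of $\{0, \ldots, n\}$, where $a_w = r_{w,1}$. The second coordinates $n - a_w$ are then also pairwise distinct, with max at $w=1$ and min at $w=v$.

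Next I would carry out a pointwise computation, using that each $f_{\vec r_w}$ takes values in $\{\pm 1\}$ (its Haar summands are supported on a partition of $[0,1]^2$). For any $\vec x$, let $R_w(\vec x) = I_w(x_1) \times J_w(x_2)$ be the unique rectangle of $\mathcal R_{\vec r_w}$ containing $\vec x$; then the product factors coordinate-wise as
\begin{equation*}
\prod_{w=1}^v f_{\vec r_w}(\vec x) = \Bigl(\prod_{w=1}^v \varepsilon^{(w)}_{R_w(\vec x)}\Bigr)\,\prod_{w=1}^v h_{I_w(x_1)}(x_1)\,\prod_{w=1}^v h_{J_w(x_2)}(x_2).
\end{equation*}
The core 1D fact I would invoke is that for nested dyadic intervals $I^{(1)} \supsetneq \cdots \supsetneq I^{(v)}$ of strictly decreasing sizes, $\prod_w h_{I^{(w)}}$ collapses to $\pm h_{I^{(v)}}$, with sign determined only by $I^{(v)}$ (since each larger $h_{I^{(w)}}$ is constant $\pm 1$ on $I^{(v)}$). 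Applied in the first coordinate with smallest scale $2^{-a_v}$, and in the second with smallest scale $2^{-(n-a_1)}$, this shows that on each $S \in \mathcal R_{\vec s}$ with $\vec s = (a_v,\, n - a_1)$ the entire product equals $c_S \cdot h_S$ for some constant $c_S \in \{\pm 1\}$. Hence $\prod_w f_{\vec r_w}$ is an $\vec s$ function with $\abs{\vec s} = n + a_v - a_1$.

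The counting step is then immediate. For a target $\vec s$, any contributing subset $\{a_1, \ldots, a_v\}$ must satisfy $a_v = s_1$ and $a_1 = n - s_2$, while the remaining $v - 2$ elements may be any $(v-2)$-subset of the $s_1 - (n-s_2) - 1 = \abs{\vec s} - n - 1$ integers strictly between $n - s_2$ and $s_1$. This gives $\binom{\abs{\vec s} - n - 1}{v - 2}$, and the hypothesis $\abs{\vec s} \ge n + v - 1$ is precisely what guarantees the binomial coefficient is well-defined (equivalently, that $v$ distinct integers with prescribed min and max fit in $\{0, \ldots, n\}$).

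The main obstacle, or rather the only point requiring care, is verifying the Haar-product identity in the pointwise computation: as $\vec x$ ranges within a fixed $S \in \mathcal R_{\vec s}$, each intermediate $I_w(x_1)$ and $J_w(x_2)$ must remain a \emph{fixed} dyadic interval (a consequence of the strict nesting), so that the $\varepsilon$-signs are also frozen on $S$ and the product is genuinely $\pm h_S$ on $S$—not merely a $\pm 1$-valued function constant on the four quadrants of $S$. Once this nested-product identity is in hand, the rest of the argument is purely combinatorial.
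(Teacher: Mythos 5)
Your proof is correct and follows essentially the same route as the paper: the extremal first and second coordinates of the $\vec r_w$ are forced to be $s_1$ and $s_2$, pinning down two of the vectors, and the remaining $v-2$ are chosen freely from the $\abs{\vec s}-n-1$ intermediate values of the first coordinate. The only difference is that you verify the nested Haar product identity inline (the paper delegates this to its separate, unproved Proposition~\ref{p.productRule}) and you state the binomial count explicitly where the paper's write-up stops at ``at most $\abs{\vec s}-n-1$ possible choices''; your version is, if anything, the more complete justification of the stated equality.
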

%%%%%%%%%%%%%%%%%%%%%%%%%%%%%% PROPOSITION PROPOSITION PROPOSITION

%%%%%%%%%%%%%%%%%%%%%%%%%%%%%% PROOF PROOF PROOF
 \begin{proof}
 Fix a vector $ \vec s$ with $ \abs{ \vec s}>n$, and suppose that
 \begin{equation*}
 \prod _{w=1} ^{v} f _{\vec r_w}
 \end{equation*}
 is an $ \vec s$ function.  Then, the maximum of the first coordinates of the $ \vec r_w$
 must be $ s_1$, and similarly for the second coordinate.  Thus, the vector $ s$
 completely specifies two of the $ \vec r_w$.

 The remaining $ v-2$ vectors must be distinct, and take values in the first
 coordinate that are greater than $n-s_2$ and less than $ s_1$.
 Hence there are at most $ \abs{ \vec s}-n-1$
 possible choices for these vectors. %(the first coordinate must be smaller than $s_1$ and greater than $n-s_2$), and from them we select $ v-2$.
 This completes
 the proof.

 \end{proof}
% %%%%%%%%%%%%%%%%%%%%%%%%%%%%%% PROOF PROOF PROOF
In two dimensions,  the decisive product rule holds.  If  $R, R' \in
\mathcal D^2$ are distinct, have the same area and non-empty intersection, then we have
%It is as follows, and we omit the
%proof.
\begin{equation}\label{e.productRule}
h_R \cdot h_{R'} = \pm h_{R\cap R'}.
\end{equation}
This rule is illustrated in Figure~\ref{f.haar} and can be generalized as follows.%, with the
%overlapping light gray rectangles representing $ -1$, so that the
%smaller darker gray rectangle will have sign $ 1$ in the product.

%%%%%%%%%%%%%%%%%%%%%%%%%%%%%% PROPOSITION PROPOSITION PROPOSITION
\begin{proposition}\label{p.productRule}
In dimension $ d=2$ the following holds. Let $ \vec r_1 ,\dotsc,
\vec r_k$ be   elements of $ \mathbb H _n ^{2}$ where one of the
vectors occurs an odd number of times. Then, the product $ \prod
_{j=1} ^{k} f _{\vec r}$ is also an $ \mathsf r$ function. If the $
\vec r_j$ are distinct and $ k\ge 2$, the product has index larger
than $ n$.
\end{proposition}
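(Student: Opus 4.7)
The plan is to reduce to the case of pairwise distinct vectors via the identity $ f _{\vec r} ^2 \equiv 1$, and then to unravel the resulting product by iterating the Haar product rule \eqref{e.productRule} coordinate-by-coordinate.

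First I would group the $\vec r_j$'s by value: any $\vec r$ appearing with even multiplicity drops out of the product since $ f _{\vec r} ^{2} \equiv 1$. The hypothesis that at least one vector has odd multiplicity guarantees that after cancellation we are left with a product $\prod _{l=1} ^{m} f _{\vec r_l '}$ of $m\ge 1$ \emph{distinct} hyperbolic $\mathsf r$ functions. When $m=1$ the conclusion is immediate (an $\mathsf r$ function of index $n$), so assume $m\ge 2$.

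Next I would expand
\begin{equation*}
\prod_{l=1}^{m} f _{\vec r_l '}=\sum _{(R_1 ,\dotsc , R_m)} \Bigl( \prod _{l=1} ^{m} \varepsilon ^{(l)} _{R_l} \Bigr) \prod _{l=1} ^{m} h _{R_l},
\end{equation*}
with the outer sum running over tuples $(R_1,\dotsc,R_m)$ satisfying $ R_l \in \mathcal R_{\vec r_l '}$. Applying \eqref{e.productRule} in each coordinate, the summand vanishes unless $R ^{\ast } \coloneqq \bigcap _{l} R_l$ is non-empty, in which case it equals $\pm h _{R ^{\ast }}$. Setting $\vec s \coloneqq (s_1,s_2)$ with $s_t \coloneqq \max _{l} r _{l ,t}'$, the rectangle $R ^{\ast }$ lies in $\mathcal R_{\vec s}$. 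Since distinct elements of $\mathbb H _n ^{2}$ must have distinct first coordinates (equal first coordinates force equal second coordinates via the constraint $r_1+r_2=n$), the maxima $s_1$ and $s_2$ are attained at two different indices $l_1\neq l_2$; this yields $s_2 = r _{l_2,2}' > n - r_{l_1,1}' = n - s_1$ and hence $\abs{\vec s} > n$.

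Finally I would check the bijection that produces an $\mathsf r$ function: for every $R ^{\ast }\in \mathcal R _{\vec s}$ and each $l$, the inequalities $r'_{l,t} \le s_t$ $(t=1,2)$ uniquely determine $R_l \in \mathcal R _{\vec r_l '}$ as the dyadic ancestor of $R ^{\ast }$ of that shape, so each $R^{\ast }$ is hit by exactly one non-vanishing tuple. The expansion collapses to $\sum _{R ^{\ast } \in \mathcal R _{\vec s}} \epsilon _{R ^{\ast }} h _{R ^{\ast }}$ with $\epsilon _{R ^{\ast }} \in \{\pm 1\}$, which is an $\mathsf r$ function of index $\abs{\vec s} > n$. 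The one delicate point is precisely this bijection step: without it, distinct admissible tuples could contribute to the same $R ^{\ast }$ with cancelling signs and the coefficient on $h _{R ^{\ast }}$ would fail to be $\pm 1$. The uniqueness of the dyadic ancestor in each coordinate, combined with $r_{l,t}'\le s_t$, is exactly what rules this out.
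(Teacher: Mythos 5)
Your proof is correct, and it is essentially the argument the paper intends: the paper states Proposition~\ref{p.productRule} without proof, presenting it as the routine generalization of the two-rectangle product rule \eqref{e.productRule}, and your write-up simply makes that iteration explicit. You correctly isolate the two points that actually need checking — that distinct elements of $\mathbb H_n^2$ have distinct coordinates in each slot (so the coordinate-wise products of nested Haar functions never degenerate into indicator functions, and the two maxima defining $\vec s$ are attained at different indices, forcing $\abs{\vec s}>n$) — and the bijection between non-vanishing tuples and rectangles of $\mathcal R_{\vec s}$, which guarantees the coefficients stay in $\{\pm1\}$. Nothing is missing.
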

%%%%%%%%%%%%%%%%%%%%%%%%%%%%%% PROPOSITION PROPOSITION PROPOSITION
%We would like to stress that the failure of the product rule in
%higher dimensions yields a  major complication to the theory.
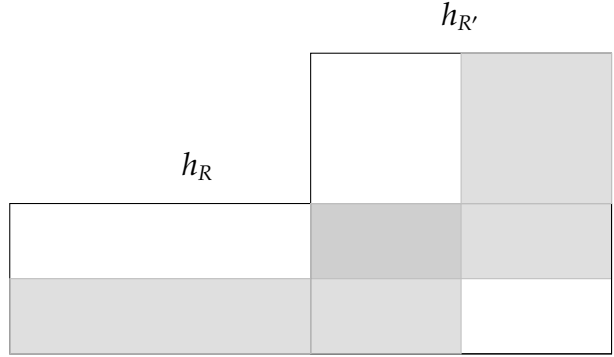
\begin{figure}
\begin{tikzpicture}
%\begin{scope}[ fill opacity=0.5]

\draw (2.5,2.5) node {$ h_R$}; \draw (0,0) rectangle (8,2);
\draw[fill,fill opacity=0.5,lightgray] (0,0) rectangle (4,1);
\draw[fill,fill opacity=0.5,lightgray] (4,1) rectangle (8,2);

\draw (6,4.5) node {$ h_{R'}$}; \draw (4,0) rectangle (8,4);
\draw[fill,fill opacity=0.5,lightgray] (4,0) rectangle (6,2);
\draw[fill,fill opacity=0.5,lightgray] (6,2) rectangle (8,4);
%\end{scope}
\end{tikzpicture}
\caption{Two Haar functions. } \label{f.haar}
\end{figure}

%%%%%%%%%%%%%%%%%%%%%%%%%%%%%% SECTION  SECTION SECTION
%%%%%%%%%%%%%%%%%%%%%%%%%%%%%% SECTION  SECTION SECTION
\section{The Digit-Scrambled van der Corput Set} \label{s.vandercorput}
In this section we introduce the digit-scrambled van der Corput set,
that is, a variation of the classical van der Corput set described,
e.g., in \cite{MR1697825}*{Section 2.1}, and prove some auxiliary
lemmas that will help us exploit its properties. This set will be
our main construction for the upper bounds in Theorems \ref{t.vdc}
and \ref{t.bmo}, although strictly speaking, Theorem \ref{t.bmo} is
satisfied by the standard van der Corput point distribution. The
reasons we need this modified version of the van der Corput set will
become clear by the end of this section.

First, we introduce some additional definitions and notations.

\begin{definition}
For $x\in [0,1)$ define $\d_i(x)$ to be the $i$'th digit in the
binary expansion of $x$, that is
$$\d_i(x)=\lfloor 2^ix\rfloor \space \textup{ mod 2}. $$
\end{definition}

\begin{definition}
For $x\in[0,1)$ we define the \emph{digit reversal} function by means of the expression
\begin{align*}
\label{revn}
\d_i\left(\rev_n (x)\right)=
\begin{cases}
\d_{n+1-i}(x), \  &i=1,2\cdots n,
\\
0, &\mbox{otherwise},
\end{cases}
\end{align*}
in other words, setting $\d_i(x) = x_i$, we have $\rev_n (0.x_1 x_2
... x_n) = 0.x_n ... x_2 x_1$.
\end{definition}

%The role of the index $ n$ is suppressed in this notation.
\begin{definition}
Let $x,\sigma\in [0,1)$ where $\sigma$ has $n$ binary digits. We define the number $x\oplus \sigma$ as
\label{d.oplus}
$$\d_i(x\oplus \sigma)=\d_i(x)+\d_i(\sigma) \space \textup{ mod 2},$$
i.e. the $i^{th}$ digit of $x$ changes if $\d_i(\sigma)=1$ and stays
the same if $\d_i(\sigma)=0$. In the literature this operation is
called \emph{digit scrambling} or \emph{digital shift}.
\end{definition}%%%%%%%%%%%%%%%%%%%%%%%%%%%%%%  DEFINITION DEFINITION DEFINITION
\begin{remark} %%%%%%%%%%%%%%%%%%%%%%%%%%%%%%  REMARK REMARK REMARK REMARK
We stress at this point that when we define a digit scrambling we only use the first $n$ binary digits of the number $\sigma\in[0,1)$. As a result, for each given positive integer $n$ there are exactly $2^n$ such digital shifts, that is, the number of digital shifts is finite. The choice of a real number $\sigma\in[0,1)$ to represent this operation is just a matter of notational convenience.
\end{remark} %%%%%%%%%%%%%%%%%%%%%%%%%%%%%%  REMARK REMARK REMARK REMARK

We are now ready to define the digit-scrambled van der Corput set.
\begin{definition}
For an integer $ n\ge 1$ and a number $\sigma\in[0,1)$ we define the $\sigma$-digit scrambled
\emph{van der Corput} set $\mathcal{V}_{n,\sigma}$ as
\label{d.vdc}
\begin{equation*}
 \mathcal{V}_{n,\sigma} = \{ v_{n,\sigma}(\tau):\, \ \tau=0,1,\ldots,2^n-1\},
\end{equation*}
where
\begin{equation*}
v_{n,\sigma}(\tau)=\bigg(\frac{\tau} {2^n}, \operatorname {rev_n}
\bigg(\frac{\tau}{2^n}\oplus \sigma\bigg)\bigg)+ (2 ^{-n-1}, 2
^{-n-1}).
\end{equation*}
It is clear that the digit-scrambled van der Corput set has
cardinality $|\mathcal{V}_{n,\sigma}|=2^n.$ We should notice that
the roles of $x$ and $y$ coordinates are symmetric, since we can
write $\mathcal{V}_{n,\sigma} = \{(\rev_n(\tau/2^n \oplus \sigma')
,{\tau}/ {2^n})+ (2 ^{-n-1}, 2 ^{-n-1}):\, \
\tau=0,1,\ldots,2^n-1\}$ with $\sigma'=\rev_n (\sigma)$.
\end{definition}

With the notation introduced above, the standard van der Corput set
$$\mathcal{V}_n = \{ (0.x_1 x_2 ... x_n 1 , 0.x_n ... x_2 x_1 1):\,
x_i =0,1 \} $$  is just $\mathcal{V}_n=\mathcal{V}_{n,0}.$ Note that
our definition differs from the classical by
 the shift $ (2 ^{-n-1}, 2 ^{-n-1})$.  This shift  `pads' the binary expansion of the elements by a final $1$ in the $(n + 1)^\textup{st}$
 place, and ensures that the average value of each coordinate is $ \tfrac 12 $:
\begin{equation}
\label{e.centralized}
 2 ^{-n}\sum _{ (x,y)\in \mathcal V_{n,\sigma}} x = 2 ^{-n}\sum _{ (x,y)\in \mathcal V_{n,\sigma}} y = \frac{1}{2}.
\end{equation}
This is just a technical modification that will simplify our formulas and calculations.

%%%%%%%%%%%%%%%%%%%%%%%%%%%%%% PROPOSITION PROPOSITION PROPOSITION

The following proposition describes which points of the van der Corput set $\mathcal{V}_{n,\sigma}$ fall into any given dyadic rectangle.
\begin{proposition}
\label{p.net} Let $k,l\in\mathbb{N}$ and $i\in\{0,1,\ldots,2^k-1\}$,
$j\in\{0,1,\ldots,2^l-1\}. $ Consider a dyadic rectangle
\begin{equation*}
R=\left[\frac{i}{2^k},\frac{i+1}{2^k}\right)\times\left[\frac{j}{2^l},\frac{j+1}{2^l}\right).
\end{equation*}
Then the set $\mathcal{V}_{n,\sigma}\cap R$ consists of the points $v_{n,\sigma}(\tau)$ where
\begin{align*}
\d_m\big(\frac{\tau}{2^n}\big)=
\begin{cases}
\d_m(\frac{i}{2^k}),\ &m=1,2\cdots, k,
\\
\d_{n+1-m}(\frac{j}{2^k}) + \d_m(\sigma) \mod 2, \  &m=n+1-l,
\cdots, n.
\end{cases}
\end{align*}
\end{proposition}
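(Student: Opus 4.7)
The proof is essentially a direct unwinding of the three definitions---binary digits, $\rev_n$, and $\oplus$---so the plan is to translate the containment $v_{n,\sigma}(\tau)\in R$ coordinate by coordinate into digit conditions on $\tau/2^n$. I will assume throughout that $k,l\le n$ (otherwise $R$ is too small to contain any point of the $2^n$-element set $\mathcal V_{n,\sigma}$, since every coordinate of every van der Corput point is a dyadic rational of denominator $2^{n+1}$ with a forced $1$ in the last slot, and the statement is vacuous).

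First I would handle the $x$-coordinate. Writing $x(\tau)=\tau/2^n + 2^{-n-1}$, the shift affects only the $(n{+}1)$-st binary digit, so $\d_m(x(\tau))=\d_m(\tau/2^n)$ for $m=1,\dots,n$. The condition $x(\tau)\in[i/2^k,(i+1)/2^k)$ is equivalent to saying that the first $k$ binary digits of $x(\tau)$ agree with those of $i/2^k$, which immediately yields
\begin{equation*}
\d_m\bigl(\tfrac{\tau}{2^n}\bigr)=\d_m\bigl(\tfrac{i}{2^k}\bigr),\qquad m=1,\dots,k.
\end{equation*}

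Next I would handle the $y$-coordinate. By the same padding argument, $\d_i(y(\tau))=\d_i(\rev_n(\tau/2^n\oplus\sigma))$ for $i=1,\dots,n$. Applying the definition of $\rev_n$ and then of $\oplus$,
\begin{equation*}
\d_i(y(\tau))=\d_{n+1-i}\bigl(\tfrac{\tau}{2^n}\oplus\sigma\bigr)=\d_{n+1-i}\bigl(\tfrac{\tau}{2^n}\bigr)+\d_{n+1-i}(\sigma)\pmod 2.
\end{equation*}
The condition $y(\tau)\in[j/2^l,(j+1)/2^l)$ is the requirement $\d_i(y(\tau))=\d_i(j/2^l)$ for $i=1,\dots,l$, hence
\begin{equation*}
\d_{n+1-i}\bigl(\tfrac{\tau}{2^n}\bigr)=\d_i\bigl(\tfrac{j}{2^l}\bigr)+\d_{n+1-i}(\sigma)\pmod 2,\qquad i=1,\dots,l.
\end{equation*}
Substituting $m=n+1-i$ converts this, as $i$ ranges from $1$ to $l$, into the range $m=n+1-l,\dots,n$ and gives exactly the second case in the proposition statement (with $j/2^l$ in place of the evident typo $j/2^k$).

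Conversely, any $\tau$ satisfying both systems of digit equations yields a point $v_{n,\sigma}(\tau)$ whose first $k$ (respectively first $l$) digits in the $x$ (respectively $y$) coordinate match those of $i/2^k$ (respectively $j/2^l$), so $v_{n,\sigma}(\tau)\in R$. There is no real obstacle here: the only thing to be careful about is indexing, in particular keeping track of which digits of $\tau/2^n$ are pinned by the $x$-condition (the \emph{first} $k$) versus by the $y$-condition (the \emph{last} $l$), and remembering that these two blocks of constraints are independent precisely because $\rev_n$ interchanges them. I would close by noting that if $k+l\le n$ the two blocks are disjoint, so exactly $2^{n-k-l}$ values of $\tau$ satisfy the constraints, recovering the expected count $|\mathcal V_{n,\sigma}\cap R|=2^n\cdot|R|$ for this regime; this sanity check, although not part of the stated proposition, confirms the indexing.
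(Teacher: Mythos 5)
Your proof is correct and follows essentially the same route as the paper's: translate membership in each factor of $R$ into agreement of the first $k$ (resp.\ $l$) binary digits, then unwind the definitions of the shift, $\rev_n$, and $\oplus$ to land on the stated digit conditions (and you rightly flag the typo $j/2^k$ for $j/2^l$). The paper merely states this more tersely; your version supplies the same argument with the indexing made explicit.
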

%%%%%%%%%%%%%%%%%%%%%%%%%%%%%% PROPOSITION PROPOSITION PROPOSITION

%%%%%%%%%%%%%%%%%%%%%%%%%%%%%% PROOF PROOF PROOF
\begin{proof}
Let $(x,y)$ be any point $[0,1)^2$. It is easy to see that $(x,y)\in R$ if and only if
\begin{align*}
\d_q(x)&=\d_q\big(\frac{i}{2^k}\big)\  \ \ \textup{for all} \ \ \ q=1,2,\ldots, k, \ \ \textup{ and} \\
\d_r(y)&=\d_r\big(\frac{j}{2^l}\big)\ \ \ \ \textup{for all} \  \ \  r=1,2,\ldots, l.
\end{align*}
The proposition is now a simple consequence of the structure of the
van der Corput set.
\end{proof}
%%%%%%%%%%%%%%%%%%%%%%%%%%%%%% PROOF PROOF PROOF
Some remarks are in order:
\begin{remarks}\label{r.npoints}

\item{ When $k+l<n$ there are exactly $2^{n-(k+l)}$ points of the van der Corput set inside the canonical rectangle $R$.
Indeed, the conditions of Proposition \ref{p.net} only specify the first $k$ and last $l$ binary digits of the $x-$coordinates of the points $v_{n,\sigma}(\tau)$. }

\item {When $k+l>n$ it might happen that the set of conditions in proposition \ref{p.net} is void (observe that the system is overdetermined in this case). }

\item {Finally, when $k+l=n$, that is when the rectangle $R$ has volume $|R|=2^{-n}$,
the system of equations in \ref{p.net} gives a unique point of the
van der Corput set inside $R$. So, for fixed $n$, the van der Corput
set $\mathcal{V}_{n,\sigma}$ is a \emph{net}: every dyadic
 rectangle of volume $N^{-1}=2^{-n}$ contains exactly one point. This has the well-known consequence, see \cite{MR1697825}, that
\begin{equation}\label{e.schmidtsharp}
\norm D_N(\mathcal{V}_{n,\sigma}). \infty. \lesssim \log N.
\end{equation}}
This fact is independent of the digit scrambling $\sigma$ and holds
in particular for the standard van der Corput set $\mathcal{V}_n$
(\cite{Cor35}, \cite{MR0066435}). In view of Schmidt's Theorem
\eqref{e.schmidt} this means that the van der Corput set is
\emph{extremal} in terms of measuring the Discrepancy function in
$L^\infty$. However, the same is not true if one is interested in
meeting the lower bound in Roth's Theorem, that is, the standard van
der Corput set $\mathcal{V}_n$ is not extremal in terms of measuring
the Discrepancy function in $L^2$. The lemma below explains this
fact. In particular it shows that the $L^2$ discrepancy of
$\mathcal{V}_n$ is big because of a single `zero-order' Haar
coefficient, i.\thinspace e.\thinspace the mean $\int D_N$. The lemma also shows that digit
scrambling provides a remedy for this shortcoming. This fact has been observed by Chen in \cite{MR711520} where the author uses digit scrambling in order to obtain the best possible $L^p$ upper bounds for a general class of 'one point in a box' sets in general dimension (see the case $k+l=n$ in the remarks above). We also note that
similar calculations, albeit slightly less general, have been carried out in \cite{HAZ}. We include a proof of this Lemma for the
sake of completeness.
\end{remarks}

%%%%%%%%%%%%%%%%%%%%%%%%%%%%%% LEMMA LEMMA LEMMA
\begin{lemma}\label{l.integral}We have
$$\int_0^1\!\!\int_0^1D_N(\mathcal{V}_{n,\sigma})\ dxdy=\frac{1}{4}\left(\frac{n}{2}-\sum_{k=1}^n \d_k(\sigma)\right) .$$
In particular
$$\int_0^1\!\!\int_0^1D_N(\mathcal{V}_{n})\ dxdy=\frac{n}{8}.$$
On the other hand, if $\sum_{k=1}^n \d_k(\sigma)=n/2$, i.e. half of
the digits are scrambled, then
$$\int_0^1\!\!\int_0^1D_N(\mathcal{V}_{n,\sigma})\ dxdy=0.$$
\end{lemma}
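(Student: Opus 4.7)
The plan is to compute the integral directly from the definitions, splitting $D_N$ into its counting and linear parts and then exploiting the binary-digit structure of the van der Corput points.

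First I would write
\begin{equation*}
\int_0^1\!\!\int_0^1 D_N(\mathcal V_{n,\sigma})\,dx\,dy
= \sum_{(p_1,p_2)\in \mathcal V_{n,\sigma}} (1-p_1)(1-p_2) \;-\; \frac{N}{4},
\end{equation*}
since the linear part integrates to $\int N x_1 x_2\,dx_1\,dx_2 = N/4$ and, for each point $\vec p$, $\int \mathbf 1_{[\vec p,\vec 1)}\,d\vec x = (1-p_1)(1-p_2)$. Expanding the product and invoking the centralization identity \eqref{e.centralized}, which gives $\sum p_1 = \sum p_2 = N/2$, this collapses to
\begin{equation*}
\int D_N \;=\; \sum_{\tau=0}^{2^n-1} x(\tau)\,y(\tau) \;-\; \frac{N}{4},
\end{equation*}
where $(x(\tau),y(\tau)) = v_{n,\sigma}(\tau)$.

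Next I would parametrize using centered binary digits. Setting $x_j=\d_j(\tau/2^n)$ and $s_j=\d_j(\sigma)$, the shift by $(2^{-n-1},2^{-n-1})$ allows me to write
\begin{equation*}
x(\tau) \;=\; \tfrac12 + \sum_{j=1}^n a_j(\tau)\,2^{-j},
\qquad
y(\tau) \;=\; \tfrac12 + \sum_{j=1}^n \epsilon_j\,a_j(\tau)\,2^{-(n+1-j)},
\end{equation*}
where $a_j(\tau)=x_j-\tfrac12\in\{\pm\tfrac12\}$ and $\epsilon_j=(-1)^{s_j}$. The key algebraic identity here is $(x_j\oplus s_j)-\tfrac12 = (-1)^{s_j}(x_j-\tfrac12)$, which takes care of the $\oplus\sigma$, and the reversal simply swaps the weights $2^{-j}\leftrightarrow 2^{-(n+1-j)}$ in the second coordinate.

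Now I would multiply out $x(\tau)y(\tau)$ and sum over $\tau$, using two orthogonality facts: $\sum_\tau a_j(\tau)=0$ for each $j$, and $\sum_\tau a_j(\tau)a_k(\tau)=0$ unless $j=k$, in which case the sum equals $2^n/4$. The $\tfrac14$ piece contributes $N/4$, the two linear-in-$a_j$ pieces vanish, and the bilinear cross term reduces to
\begin{equation*}
\sum_{j=1}^n \epsilon_j\,2^{-j}\,2^{-(n+1-j)} \cdot 2^n\cdot\tfrac14 \;=\; \tfrac18 \sum_{j=1}^n (-1)^{s_j}.
\end{equation*}
Subtracting $N/4$ and rewriting $\sum_j(-1)^{s_j}=n-2\sum_j s_j$ yields the claimed value $\tfrac14\bigl(\tfrac{n}{2}-\sum_k \d_k(\sigma)\bigr)$. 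The two stated special cases ($\sigma=0$ giving $n/8$, and $\sum\d_k(\sigma)=n/2$ giving $0$) follow immediately.

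There is no serious obstacle here; the only thing to be careful about is the bookkeeping for the reversal and the digital shift in the second coordinate, which is handled cleanly by the substitution $\epsilon_j=(-1)^{s_j}$ together with the observation that the Walsh-type orthogonality $\sum_\tau a_j a_k = \tfrac{2^n}{4}\delta_{jk}$ kills every off-diagonal term.
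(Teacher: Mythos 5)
Your proof is correct and follows essentially the same route as the paper's: both reduce the integral to $\sum_\tau x(\tau)y(\tau) - N/4$ via the centralization identity \eqref{e.centralized} and then evaluate that sum digit by digit using orthogonality of the binary digits. Your centered-digit substitution $a_j = x_j - \tfrac12$ (which makes the off-diagonal terms vanish outright, rather than each contributing $N/4$ as in the paper's \eqref{e.orthodigit} and then being resummed) is only a mild streamlining of the same computation.
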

%%%%%%%%%%%%%%%%%%%%%%%%%%%%%% LEMMA LEMMA LEMMA

%%%%%%%%%%%%%%%%%%%%%%%%%%%%%% PROOF PROOF PROOF
%%%%%%%%%%%%%%%%%%%%%%%%%%%%%% PROOF PROOF PROOF
\begin{proof}

As usually, we write $N=2^n$. We have
\begin{align*}
I&\coloneqq\int_0^1\!\!\int_0^1 D_N(\mathcal{V}_{n,\sigma})(x,y)\ dxdy=-N/4+\sum_{\tau=0}^{N-1}\int_0 ^1\int_0 ^1 \mathbf 1_{[0,x]\times[0,y]}\left(v_{n,\sigma}\left(\tau/N\right)\right)dxdy\\&= -N/4+\sum_{\tau=0}^{N-1}\left(1-\frac{\tau}{N}-\frac{1}{2N}\right)\left(1-\rev_n\left(\frac{\tau}{N}\oplus\sigma\right)-\frac{1}{2N}\right).
\end{align*}
Using $\eqref{e.centralized}$ we get
\begin{equation}\label{e.i}
I=-\frac{N}{4}+\frac{1}{2}-\frac{1}{4N}+\sum_{\tau=0}^{N-1}\frac{\tau}{N}\cdot
\rev_n\left(\frac{\tau}{N}\oplus\sigma\right).
\end{equation}
Now expand the sum above using the binary representation of the
summands as follows:
\begin{align}\label{e.diag} \sum_{\tau=0}^{N-1}\frac{\tau}{N}\cdot \rev_n\left(\frac{\tau}{N}\oplus\sigma\right)&=\sum_{\tau=0}^{N-1}\sum_{k=1}^n\sum_{l=1}^n\frac{\d_k\left(\frac{\tau}{N}\right) \d_{l}\left(\rev_n\left(\frac{\tau}{N}\oplus\sigma\right)\right)}{2^{k+l}} \notag
\\&=\sum_{\tau=0}^{N-1}\sum_{k=1}^n\sum_{l=1}^n\frac{\d_k\left(\frac{\tau}{N}\right) \d_{n+1-l}\left(\frac{\tau}{N}\oplus\sigma\right)}{2^{k+l}}
\\&=\sum_{k=1}^n\sum_{l=1}^n \frac{1}{2^{k+l}} \sum_{\tau=0}^{N-1} {\d_k\left(\frac{\tau}{N}\right) \d_{n+1-l}\left(\frac{\tau}{N}\oplus\sigma\right)}.\notag
\end{align}
Finally observe that if $s,t\in\{1,2,\ldots,n\}$ then
\begin{align}\label{e.orthodigit}
\sum_{\tau=0}^{N-1}\d_s\left(\frac{\tau}{N}\right)\d_t\left(\frac{\tau}{N}\oplus\sigma\right)=
\begin{cases}
\frac{N}{2}
\left(1-\d_s(\sigma)\right), &s=t
\\
\frac{N}{4}
 , &s\neq t.
\end{cases}
\end{align}
Indeed, when $s=t$, the terms in the sum above are non-zero exactly
when  $\d_s(\frac{\tau}{N})=1$ and $\d_s(\sigma)=0$, and hence the
first equality. The case $s\neq t $ is similar.

Using \eqref{e.orthodigit} and \eqref{e.diag} we get
\begin{align*}
\sum_{\tau=0}^{N-1} \frac{\tau}{N} \, \rev_n \left(
\frac{\tau}{N}\oplus\sigma \right) &=
\frac{n}{8}-\frac{1}{4}\sum_{k=1}^n
\d_k(\sigma)+\frac{N}{4}-\frac{1}{2}+\frac{1}{4N},
\end{align*}
which, combined with \eqref{e.i}, completes the proof.
\end{proof}
%%%%%%%%%%%%%%%%%%%%%%%%%%%%%% PROOF PROOF PROOF
%%%%%%%%%%%%%%%%%%%%%%%%%%%%%% PROOF PROOF PROOF
{\emph {Remark.}} We should point out that in \cite{KrPil} it has
been shown that the $L^2$ norm of the Discrepancy of the
digit-scrambled van der Corput set depends only on the number of
$1$'s in $\sigma$, and not their distribution.

%%%%%%%%%%%%%%%%%%%%%%%%%%%%%% SECTION  SECTION SECTION
%%%%%%%%%%%%%%%%%%%%%%%%%%%%%% SECTION  SECTION SECTION
\section{Haar Coefficients for the Digit-Scrambled van der Corput Set} %\label{s.}

In this section we will work with the digit-scrambled van der Corput set $\mathcal{V}_{n,\sigma}$ as defined in Section \ref{s.vandercorput}, where $\sigma\in[0,1)$ is arbitrary and $N=2^n$. We will just write $D_N$ for the discrepancy function of $\mathcal{V}_{n,\sigma}$. The following Lemma records the main estimate for the Haar 
coefficients of $D_N$ and is the core of the proof for the upper bounds in Theorems \ref{t.vdc} and \ref{t.bmo}.

%%%%%%%%%%%%%%%%%%%%%%%%%%%%%% LEMMA LEMMA LEMMA
\begin{lemma}\label{l.haarcoeffs}  For any dyadic rectangle
 $ R \in \mathcal {D}^2$ we have
\begin{equation*}
\lvert  \ip D_N, h_R, \rvert \lesssim \frac{1}{ N}.
\end{equation*}
\end{lemma}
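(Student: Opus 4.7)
The plan is to split $D_N = C_{\mathcal V} - L_N$ into counting and linear parts and estimate each Haar coefficient. For the linear part, the identity $\int_0^1 x\,h_I(x)\,dx = |I|^2/4$ (noted already in Proposition~\ref{p.rvec}) gives $\langle L_N, h_R\rangle = N|R|^2/16$. For the counting part, observe that $\int_0^1 h_I(x)\,\mathbf{1}_{[p,1)}(x)\,dx = \tilde f_I(p)$, where $\tilde f_I$ is the tent function on $I$ with peak $|I|/2$ at the midpoint, so
\[
\langle C_{\mathcal V}, h_R\rangle \;=\; \sum_{\vec p\, \in\, \mathcal V \cap R} \tilde f_{R_1}(p_1)\,\tilde f_{R_2}(p_2).
\]
The lemma then reduces to showing this sum differs from $N|R|^2/16$ by at most $O(1/N)$. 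When $|R|\le 1/N$, the net property (Remarks~\ref{r.npoints}) gives at most one point in $R$, and both $|\tilde f_{R_1}\tilde f_{R_2}|\le |R|/4$ and $N|R|^2/16$ are already $\lesssim 1/N$; so the substantive case is $|R| = 2^{-(k+l)} > 1/N$, in which case $R$ contains exactly $M := N|R| = 2^m$ points with $m := n-k-l \ge 1$.

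By Proposition~\ref{p.net}, I parametrize these points as $\vec p^{(q)}$ for $q=0,\ldots,M-1$, where $p_1^{(q)}$ lies in the $q$-th sub-interval $J_1^{(q)}\subset R_1$ of length $|R_1|/M$ and $p_2^{(q)}$ lies in $J_2^{(\pi(q))}\subset R_2$; here $\pi(q) = q^{\rev}\oplus \tilde\sigma$ is $m$-bit reversal composed with an XOR shift derived from $\sigma$. Since $M\ge 2$, each $J_t^{(q')}$ lies entirely in one half of $R_t$, so $\tilde f_{R_t}$ is affine on $J_t^{(q')}$ with slope $\epsilon_t^{(q')}\in\{\pm 1\}$. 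I therefore write
\[
\tilde f_{R_t}(p_t^{(q)}) \;=\; \tilde f_{R_t}\bigl(\operatorname{mid}(J_t^{(q')})\bigr) \;+\; \epsilon_t^{(q')}\,\delta_t,
\]
where the offset $\delta_t$ is independent of $q$ and satisfies $|\delta_t|\le |J_t|/2$. Multiplying out and summing over $q$ then decomposes $\langle C_{\mathcal V},h_R\rangle$ into a main term $\mathrm{I} := \sum_q \tilde f_{R_1}(\operatorname{mid}(J_1^{(q)}))\tilde f_{R_2}(\operatorname{mid}(J_2^{(\pi(q))}))$ plus three $\delta$-corrections.

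I expect the main obstacle to be the main term $\mathrm{I}$. After rescaling, $\mathrm{I} = (|R|/M^2)\sum_q G(q)G(\pi(q))$ with the discrete tent $G(q) := \min(q+\tfrac12, M-q-\tfrac12)$. A direct computation, using $|q-(M-1)/2| = 2^{m-2}+\sum_{j=2}^m 2^{m-j-1}\,w_{\{1,j\}}(q)$ with $w_S$ the standard Walsh functions on $\mathbb F_2^m$, gives the decomposition $G - M/4 = -\sum_{j=2}^m 2^{m-j-1}\,w_{\{1,j\}}$. Since $\pi$ is bit reversal plus XOR, $w_S(\pi(q))=\pm w_{S^{\rev}}(q)$ with $S^{\rev}=\{m+1-i:i\in S\}$, and Walsh orthogonality collapses $\sum_q (G-M/4)(q)(G-M/4)(\pi(q))$ to a single surviving term (at $j=m$) of absolute value $M/4$; this yields $\mathrm{I} = N|R|^2/16 + O(|R|/M) = \langle L_N,h_R\rangle + O(1/N)$. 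The two mixed $\delta$-corrections vanish by the same orthogonality argument, because $G-M/4$ has no Walsh coefficient supported at the singleton $\{m\}$. The pure $\delta_1\delta_2$-correction reduces to $\pm\delta_1\delta_2\sum_q r_1(q) r_m(q)$, which vanishes for $m\ge 2$ and is bounded by $|\delta_1\delta_2|\,M \lesssim 1/N$ in the exceptional case $m=1$. Combining everything yields $|\langle D_N,h_R\rangle|\lesssim 1/N$.
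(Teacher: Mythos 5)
Your proof is correct, and for the main case it takes a genuinely different route from the paper's. Both arguments share the same skeleton: the reduction to $\langle C,h_R\rangle = N|R|^2/16 + O(1/N)$ via the tent function $\int_{p}^{1}h_I = \tilde f_I(p) = |I|\,\phi(2^k p)$, and a trivial small-rectangle case (the paper cuts at $|R|<4/N$, you at $|R|\le 1/N$; either works). Where you diverge is in extracting the cancellation among the $M=N|R|$ points of $R$. The paper groups the points into quadruples obtained by flipping the first and last free digits, uses the identity $\phi(x)+\phi(x\oplus\tfrac12)=\tfrac12$ to reduce each quadruple's contribution to $|R|/4$ plus a product of two differences, and bounds that product by $(N|R|)^{-2}$ via the Lipschitz property of $\phi$; summing $N|R|/4$ errors of size $1/(N^2|R|)$ gives $1/N$. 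You instead expand the discrete tent $G-M/4$ in Walsh characters $w_{\{1,j\}}$ on $\mathbb F_2^m$ and exploit the fact that digit reversal permutes characters ($w_S\circ\pi = \pm w_{S^{\rev}}$), so orthogonality annihilates all cross terms except $j=j'=m$; I checked the identity $G-M/4=-\sum_{j=2}^m 2^{m-j-1}w_{\{1,j\}}$, the vanishing of the mixed $\delta$-corrections, and the $m=1$ exception in the pure correction, and they all hold. The trade-off: the paper's local quadruple argument is shorter, uses only two free digits, and is robust under perturbations that preserve those digits (and under general $\sigma$ with no extra bookkeeping); your global computation is sharper --- it evaluates $\langle C,h_R\rangle$ essentially exactly, identifying the error as a single surviving Walsh coefficient of modulus $M/4$, in the spirit of the exact $L^2$-discrepancy formulas of Halton--Zaremba and Kritzer--Pillichshammer cited in \S3 --- and it handles $M=2$ directly rather than folding it into the small-volume case. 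Either way the bound $|\langle D_N,h_R\rangle|\lesssim 1/N$ follows.
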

%%%%%%%%%%%%%%%%%%%%%%%%%%%%%% LEMMA LEMMA LEMMA
We need to consider dyadic rectangles of the form
%\begin{equation*}
%\label{eq.ofR}
$R=\left[\frac{i}{2^k};\frac{i+1}{2^k}\right)\times\left[\frac{j}{2^l};\frac{j+1}{2^l}\right)$,
%\end{equation*}
where $k,l\in\mathbb{N}$ and $i\in\{0,1,\ldots,2^k-1\}$,
$j\in\{0,1,\ldots,2^l-1\}. $ The proof will be divided in two cases,
depending on whether the volume of $R$ is `big' or `small'. %In other
%words, we consider different cases for the number $k+l$ which gives
%us some information about the number of points of the van der Corput
%set contained in $R$.

We will use an auxiliary function  to help us write down formulas for the inner product of the counting part with the Haar function corresponding to the rectangle $R$.  In particular, $\phi:\mathbb R\rightarrow \mathbb R$ is the periodic function
\begin{align*}
\phi (x) =
\begin{cases}
\{x\}, \ &0<\{x\}<\tfrac 12
\\
1-\{x\}, \  &\tfrac 12 <\{x\} < 1 ,
\end{cases}
\end{align*}
where $\{x\}$ is the fractional part of $x$.
Observe that the function $\phi$ is the periodic extension of the anti-derivative of the Haar function on [0,1]. See Figure \ref{f.phi}.

Let $p=(p_x,p_y)\in[0,1)^2$. A moment's reflection  allows us to
write

\begin{align}\label{e.phi}
\ip \mathbf 1_{[\vec p,\vec 1)} ,h_R,=
\begin{cases}
|R|    \phi (2^kp_x)\phi (2^l p_y), \ &p\in R,
\\
0, &\textup{otherwise}.
\end{cases}
\end{align}

We also record two simple properties of the function $\phi$ that will be useful in what follows. First, for $x\in\mathbb{R}$,
\begin{equation}\label{e.plushalf}
\phi (x)+\phi \bigg(x\oplus \frac{1}{2}\bigg) = \frac{1}{2}.
\end{equation}
Second, $\phi$ is a `Lipschitz' function with constant $1$. For
$x,y\in \mathbb{R}$,
\begin{equation}\label{e.lips}
\left|\phi(y)-\phi(x)\right|\leq |\{y\}-\{x\}|.
\end{equation}

%%%%%%%%%%%%%%%%%%%%%%%%%%%%%% PROOF PROOF PROOF
%%%%%%%%%%%%%%%%%%%%%%%%%%%%%% PROOF PROOF PROOF
\begin{proof}[Proof of Lemma \ref{l.haarcoeffs} when $|R|<\frac{4}{N}$]
We fix a dyadic rectangle $R$ with $|R|<\frac{4}{N}$. We treat the linear part and the counting part separately.

For the linear part we have that
\begin{equation*}
\ip L_N,h_R , = \frac{N|R|^2}{4^2}\lesssim \frac{1}{N}.
\end{equation*}

Now notice that since $k+l>n-2$, there are at most $2$ points in
$\mathcal{V}_{n,\sigma}\cap R$. Since $\phi$ is obviously bounded by
$1$, formula \eqref{e.phi} implies
\begin{equation*}
|\ip C_{\mathcal{V}_{n,\sigma}},h_R,|\leq |R| \sum_{p\in\mathcal{V}_{n,\sigma}\cap R}  \phi (2^kp_x)\phi (2^l p_y) \leq 4|R|\lesssim\frac{1}{N}.
\end{equation*}
Summing up the estimates for the linear and the counting part completes the proof.
\end{proof}
%%%%%%%%%%%%%%%%%%%%%%%%%%%%%% PROOF PROOF PROOF
%%%%%%%%%%%%%%%%%%%%%%%%%%%%%% PROOF PROOF PROOF

%%%%%%%%%%%%%%% Figure
\begin{figure}
\begin{tikzpicture}

\draw[->](0,-.5) -- (0,1.5); \draw (.125,1) -- (-.125,1) node[left]
{$ 1$}; \draw[->] (-.5,0) -- (3.5,0); \draw (1.5,.125) --
(1.5,-.125) node[below] {$ 1/2$}; \draw (0,0) -- (1.5,1) -- (3,0);
\end{tikzpicture}

\caption{The graph of the  function $ \phi $.} \label{f.phi}
\end{figure}
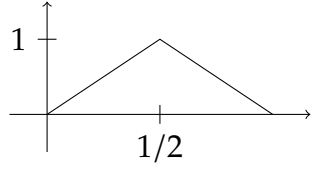
%%%%%%%%%%%%%%% Figure

%%%%%%%%%%%%%%%%%%%%%%%%%%%%%% PROOF PROOF PROOF
%%%%%%%%%%%%%%%%%%%%%%%%%%%%%% PROOF PROOF PROOF
\begin{proof}[Proof of Lemma \ref{l.haarcoeffs} when $|R|\geq \frac{4}{N}$]
The proof of the case $|R|\geq \frac{4}{N}$ is much more involved as
this is the typical case where the rectangle contains `many' points
of the point set $\mathcal{V}_{n,\sigma}$. Before going into the
details of the proof we will discuss the structure of the set $R\cap
\mathcal{V}_{n,\sigma}$ in order to organize and simplify the
calculations that follow.

First, notice that the condition $|R|\geq \frac{4}{N}$ implies that
$n-(k+l)\geq 2$. In other words, there are at least $4$ points in
the set $R\cap \mathcal{V}_{n,\sigma}$ according to Proposition
\ref{p.net} and Remark \ref{r.npoints}. To be more precise, let us
look at a point $p=(x,y)\in \mathcal{V}_{n,\sigma}$. The $x$-coordinate
can be written in the form $x=0.x_1x_2\ldots x_n1,$ where
$x_i=\d_i(x)$, for $i=1,2,\ldots,n$. %Remember that the
%$x$-coordinates of the van der Corput set are of the form
%$\frac{\tau}{2^n}+2^{-n-1}$ for some $\tau\in\{1,2,\ldots,2^n-1\}$
%hence their binary expansion is of length $n+1$, the
%$n+1^\textup{st}$ digit being always $1$.
The first $k$  and the last $l$ binary digits of $x$ are determined
by the fact that $x\in R$ (Proposition \ref{p.net}). That leaves us
with at least $2$ `free' digits for $x$
$$x=0.x_1\ldots x_k,*,\ldots,*,x_{n-l+1} \ldots x_n 1.$$
We group all points in $\mathcal{V}_{n,\sigma}\cap R$ in quadruples
according to the choices for the first and last `free` digits
$x_{k+1}$ and $x_{n-l}$. In particular, we consider
\emph{quadruples} \eqref{e.quad} of points in
$\mathcal{V}_{n,\sigma}\cap R$ with $x$-coordinates of the form:

\[ \begin{array}{ccc}\tag{Q}\label{e.quad}
&0.x_1\ldots x_k \ 0 \ x_{k+2} \ldots,x_{n-l-1} \ 0\ x_{n-l+1}\ldots x_n 1,\\
&0.x_1\ldots x_k \ 0 \ x_{k+2} \ldots,x_{n-l-1} \ 1 \ x_{n-l+1}\ldots x_n 1,\\
&0.x_1\ldots x_k \ 1 \ x_{k+2} \ldots,x_{n-l-1} \ 0 \ x_{n-l+1}\ldots x_n 1,\\
&0.x_1\ldots x_k  \ 1\  x_{k+2} \ldots,x_{n-l-1} \ 1\ x_{n-l+1}\ldots x_n 1.
\end{array}\]

\begin{figure}
\begin{tikzpicture}
\draw (8.5,1.5) node {$R$}; \draw (0,0) rectangle (8,2);
\draw[fill,fill opacity=0.5,lightgray] (0,0) rectangle (4,1);
\draw[fill,fill opacity=0.5,lightgray] (4,1) rectangle (8,2);
\foreach \position in { (.675,.125), (4.675,.325), (4.775,1.325),
(.775,1.125)} { \draw[fill=black] \position circle (2pt);}%
\draw  (.65,.5) node  {$ (u,v)$};
%\draw (.675,.125) node (end) {}; %%
%\draw  (-2,.75) node (beg) {$ (u,v)$}; %%
%\path[->] (beg) edge [out=-90, in=140] (end);
\end{tikzpicture}
\caption{The quadruple $Q$. } \label{f.four}
\end{figure}
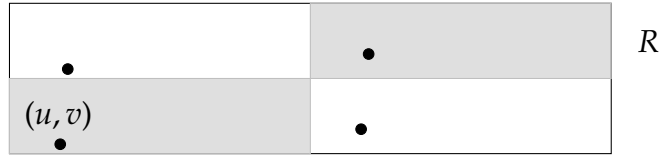

There are exactly $2^{n-(k+l)-2}=\frac{N|R|}{4}$ such quadruples. Let's index the quadruples \ref{e.quad} arbitrarily as $Q_r$, $r=1,2,\ldots,\frac{N|R|}{4}$. Observe that we can write
\begin{align}\label{e.qsplit}
\ip D_N, h_R, = \sum_{p\in\mathcal{V}_{n,\sigma}\cap R} \ip \textbf{1}_ {[\vec p, \vec 1)} ,h_R,  -\frac{N|R|^2}{16}=
\sum_{r=1} ^{\frac{N|R|}{4} } \bigg(\sum _{p\in Q_r} \ip \textbf{1}_ {[\vec p, \vec 1)} ,h_R,  -\frac{|R|}{4}\bigg).
\end{align}
The following Proposition exploits  large cancellation within these
quadruples.

\begin{proposition}\label{p.cancel}
\begin{equation*}
\ABs { \sum _{p\in Q_r} \ip \mathbf 1_ {[\vec p, \vec 1)} ,h_R,  -\frac{|R|}{4} }  \lesssim \frac{1}{N^2|R|}.
\end{equation*}
\end{proposition}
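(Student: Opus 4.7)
The plan is to start from formula \eqref{e.phi}, which reduces each inner product $\ip \mathbf 1_{[\vec p,\vec 1)}, h_R,$ (for $p\in R$) to $\abs{R}\,\phi(2^{k}p_{x})\,\phi(2^{l}p_{y})$. Pulling the common factor $\abs{R}$ outside, the claim is equivalent to
\begin{equation*}
\ABs{\sum_{p\in Q_r}\phi(2^{k}p_{x})\phi(2^{l}p_{y})-\tfrac14}\lesssim \frac{1}{N^{2}\abs{R}^{2}}.
\end{equation*}
Introducing $\epsilon\coloneqq 2^{-(n-k-l)}=1/(N\abs{R})$, the hypothesis $\abs{R}\ge 4/N$ forces $\epsilon\le 1/4$, and the target reduces to showing the sum above equals $\tfrac14+O(\epsilon^{2})$.

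Next I would unravel the scaled structure of the quadruple \eqref{e.quad}. Writing $u_i\coloneqq 2^{k}x_i$ and $v_i\coloneqq 2^{l}y_i$, the $x$-coordinates immediately give $u_{3}-u_{1}=u_{4}-u_{2}=1/2$ and $u_{2}-u_{1}=u_{4}-u_{3}=\epsilon$. On the $y$-side one traces through $\rev_{n}$ and $\oplus\sigma$ from Definition~\ref{d.vdc}: the $(k+1)$-st and $(n-l)$-th binary digits of $\tau_i/2^{n}$ appear, after reversal, at positions $n-k$ and $l+1$ of $y_i$ respectively. Multiplying by $2^{l}$ produces $v_{2}-v_{1}=v_{4}-v_{3}=\pm 1/2$ and $v_{3}-v_{1}=v_{4}-v_{2}=\pm\epsilon$, where the two signs in each pair are \emph{equal} (sign-coherence is essential) and depend only on digits of $\sigma$. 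Because the micro-shifts occur at binary position $n-k-l\ge 2$, they never flip the leading binary digit; consequently the pairs $\{u_1,u_2\}$, $\{u_3,u_4\}$, $\{v_1,v_3\}$, $\{v_2,v_4\}$ each lie in a common half of $[0,1)$, where $\phi$ is affine with slope $\pm 1$. The $\pm 1/2$ shifts are handled by \eqref{e.plushalf}.

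Setting $a=\phi(u_1)$, $b=\phi(v_1)$, these facts yield
\begin{equation*}
\phi(u_1)=a,\quad \phi(u_2)=a+\alpha,\quad \phi(u_3)=\tfrac12-a,\quad \phi(u_4)=\tfrac12-a-\alpha,
\end{equation*}
\begin{equation*}
\phi(v_1)=b,\quad \phi(v_3)=b+\beta,\quad \phi(v_2)=\tfrac12-b,\quad \phi(v_4)=\tfrac12-b-\beta,
\end{equation*}
with $\abs{\alpha}=\abs{\beta}=\epsilon$. Pairing $\phi(u_1)\phi(v_1)+\phi(u_3)\phi(v_3)$ with $\phi(u_2)\phi(v_2)+\phi(u_4)\phi(v_4)$ and factoring $(a+\alpha)+(\tfrac12-a-\alpha)=\tfrac12$ against the common factor $(\tfrac12-b)$, a short calculation shows that every contribution of order $a$, $b$, $ab$, $\alpha$, $\beta$, $\alpha b$, $a\beta$ cancels, leaving
\begin{equation*}
\sum_{i=1}^{4}\phi(u_i)\phi(v_i)=\tfrac14+\alpha\beta.
\end{equation*}
Reinserting $\abs{R}$ gives $\abs{R}\,\abs{\alpha\beta}=\abs{R}\epsilon^{2}=1/(N^{2}\abs{R})$, which is exactly the proposition.

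The main obstacle is the combinatorial bookkeeping in step two: one must verify that within each quadruple the $v$-side micro-shifts carry the same sign (so that $v_4-v_3=v_2-v_1$ and $v_4-v_2=v_3-v_1$ \emph{exactly}), and that the ``same half of $[0,1)$'' property holds simultaneously for all four micro-pairs. Both are consequences of the product structure of the digit scrambling together with $n-k-l\ge 2$, but they must be tracked carefully through $\rev_{n}$ and $\oplus\sigma$. Once this is established, the identity $\sum_i\phi(u_i)\phi(v_i)=\tfrac14+\alpha\beta$ is a mechanical expansion.
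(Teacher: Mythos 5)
Your proof is correct and follows essentially the same route as the paper: reduce via \eqref{e.phi} to the sum $\Sigma$ of products of $\phi$-values, use \eqref{e.plushalf} to collapse the macro-shifts and exhibit $\Sigma-\tfrac14$ as a product of two micro-differences, each of size $1/(N|R|)$. The only divergence is at the last step, where the paper sidesteps the sign-coherence and same-half bookkeeping you flag as the main obstacle by invoking the Lipschitz property \eqref{e.lips} of $\phi$ to bound each micro-difference, rather than linearizing to get the exact identity $\Sigma=\tfrac14+\alpha\beta$.
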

Let assume Proposition \ref{p.cancel} for a moment in order to
complete the proof of Lemma \ref{l.haarcoeffs}. Indeed, Proposition
\ref{p.cancel} together with equation \eqref{e.qsplit} immediately
yield
\begin{align*}
\ip D_N, h_R, \lesssim \sum_{r=1} ^{\frac{N|R|}{4} }\frac{1}{N^2|R|}
\lesssim \frac{1}{N}.
\end{align*}
This completes the proof modulo Proposition \ref{p.cancel}.
\end{proof}

\begin{proof}[Proof of Proposition \ref{p.cancel}] For the proof of the proposition we will fix a
$Q=Q_r$ and suppress the index $r$ since it does not play any role.
Suppose $p=(u,v)$ is any of the points with $x$-coordinate as in
\eqref{e.quad} and $y$-coordinate $v$ such that
$p\in\mathcal{V}_{n,\sigma}$ . Then it is easy to see that the
quadruple \eqref{e.quad} consists of the four points which can be
written in the form:
\begin{align*}\tag{Q}
\begin{cases}
&(u,v),\\
&(u\oplus 2^{-k-1},v\oplus2^{-n+k}), \\
&(u\oplus 2 ^{-n+l},v\oplus2^{-l-1}),\\
&(u\oplus 2^{-n+l}\oplus 2^{-k-1},v\oplus2^{-n+k}\oplus2^{-l-1}).
\end{cases}
\end{align*}
See also Figure \ref{f.four}.

We invoke equation \eqref{e.phi} to write
\begin{align}\label{e.phiprop}
 \sum _{p\in Q} \ip \textbf{1}_ {[\vec p, \vec 1)} ,h_R,  -\frac{|R|}{4} = |R|\Big( \sum _{p\in Q} \phi(2^kp_x)\phi(2^lp_y)  -\frac{1}{4} \Big)\eqqcolon |R|\big(\Sigma-\frac{1}{4}\big).
\end{align}
We have
\begin{align*}
 \Sigma  &= \phi(2^ku)\phi(2^lv)\\
 &+\phi(2^k u\oplus \frac{1}{2})\phi(2^l ( v\oplus2^{-n+k}))\\
 &+\phi(2^k ( u\oplus 2 ^{-n+l}))\phi(2^l v\oplus \frac{1}{2})\\
 &+\phi(2^k u\oplus 2^k\cdot 2^{-n+l}\oplus \frac{1}{2})\phi(2^l v\oplus2^l\cdot 2^{-n+k}\oplus\frac{1}{2}).
 \end{align*}
Using equation \eqref{e.plushalf} we get
\begin{equation*}
\Sigma=\frac{1}{4}+\big[\phi(2^ku)- \phi(2^k ( u\oplus 2 ^{-n+l}))\big]\big[\phi(2^lv)-\phi(2^l ( v\oplus2^{-n+k}))\big].
\end{equation*}
Finally, using the fact the the function $\phi$ is Lipschitz
\eqref{e.lips} we have
\begin{align*}
\ABs{\Sigma -\frac{1}{4}} \leq (2^{-n+l+k})^2=\frac{1}{N^2 |R|^2}.
\end{align*}
This estimate together with equation \eqref{e.phiprop} completes the proof.
\end{proof}
%%%%%%%%%%%%%%%%%%%%%%%%%%%%%% PROOF PROOF PROOF
%%%%%%%%%%%%%%%%%%%%%%%%%%%%%% PROOF PROOF PROOF

Lemma \ref{l.haarcoeffs} has an analogue in the case of Haar
functions $h^{1,0}_{[0,1]\times I}$ and $h^{0,1} _{I\times [0,1]}$,
where $I\in \mathcal{D}$. Observe also that the inner product that
corresponds to $h^{1,1} _{[0,1]^2}$ is the content of Lemma
\ref{l.integral} of the previous section.

%%%%%%%%%%%%%%%%%%%%%%%%%%%%%% LEMMA LEMMA LEMMA
%%%%%%%%%%%%%%%%%%%%%%%%%%%%%% LEMMA LEMMA LEMMA
\begin{lemma}\label{l.semihaarcoeffs}
For $I\in\mathcal{D}$ we have the estimates
\begin{align*}
&\abs{\ip D_N, h^{0,1}_{I\times [0,1]}, }\lesssim |I|,\\
&\abs{\ip D_N, h^{1,0}_{[0,1]\times I}, }\lesssim |I|.
\end{align*}
\end{lemma}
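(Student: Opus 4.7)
My plan is to treat only $\ip D_N, h^{0,1}_{I\times[0,1]},$; the bound on $\ip D_N, h^{1,0}_{[0,1]\times I},$ follows automatically from the $x$--$y$ symmetry of the digit-scrambled van der Corput set recorded in Definition \ref{d.vdc} (apply the same result with $\sigma$ replaced by $\rev_n\sigma$, which interchanges the coordinates). Since $h^{0,1}_{I\times[0,1]}(x_1,x_2)=h_I(x_1)$ is independent of $x_2$, I first peel off the $x_2$ integral by Fubini. The linear contribution equals $\tfrac{N}{8}|I|^2$ by \eqref{e.veryelementary}, and the one-dimensional analogue of \eqref{e.phi}, namely $\langle \mathbf 1_{[p_1,1)}, h_I\rangle = |I|\phi(2^k p_1)\mathbf 1_{p_1\in I}$, reduces the counting part to controlling
\begin{equation*}
\sum_{p:\,p_1\in I}(1-p_2)\,\phi(2^k p_1).
\end{equation*}

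When $|I|<4/N$, Proposition \ref{p.net} shows that at most two points of $\mathcal V_{n,\sigma}$ lie in the vertical strip $I\times[0,1]$, and a crude estimate already yields the desired bound. The main case is $|I|\ge 4/N$. Write $|I|=2^{-k}$, set $M=N|I|=2^{n-k}$, and parametrize the $M$ points with $p_1\in I$ by $s\in\{0,1,\ldots,M-1\}$ via $\tau=i\cdot 2^{n-k}+s$. Unraveling the construction yields $\phi(2^k p_1(s))=\phi((s+1/2)/M)$ and $p_2(s)=c+a'(s)/M+\tfrac{1}{2N}$, where $0\le c=c(i,\sigma)<1/M$ is independent of $s$, and $a'(s)=\rev_{n-k}(s)\oplus\tilde\sigma'$ is a permutation of $\{0,\ldots,M-1\}$ with a fixed $\tilde\sigma'$ built from $\sigma$.

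The decisive and most delicate step is to exploit the involution $s\leftrightarrow M-1-s$. The symmetry $\phi(1-x)=\phi(x)$ keeps $\phi((s+1/2)/M)$ invariant under this involution, and the key identity
\begin{equation*}
a'(s)+a'(M-1-s)=M-1
\end{equation*}
holds because bit-reversal on $n-k$ bits commutes with bitwise complementation and the $\oplus\tilde\sigma'$ twists cancel between partners. Pairing the $M$ terms collapses the sum to $\bigl(1-2c+\tfrac1M-\tfrac1N\bigr)\cdot\tfrac{M}{8}$; after factoring out $|I|$, the leading $M/8$ cancels the linear contribution $N|I|^2/8=|I|\cdot M/8$ exactly, leaving a remainder of size $\tfrac{|I|M}{8}\bigl(-2c+\tfrac1M-\tfrac1N\bigr)$. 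Since $cM<1$ and $M/N=|I|\le 1$, this remainder is bounded by $|I|/2$, completing the proof. The hard part is precisely the verification of the digit-reversal identity above; this is where the digit-scrambled structure of $\mathcal V_{n,\sigma}$ enters essentially and powers the exact cancellation with the linear part.
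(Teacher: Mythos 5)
Your argument is correct, and it reaches the bound by a genuinely different pairing than the paper's. The paper also splits on the size of $|I|$, uses the same $\phi$-formula for $\langle \mathbf 1_{[\vec p,\vec 1)},h^{0,1}_{I\times[0,1]}\rangle$, and pairs points — but its pairs \eqref{e.pair} are the two points whose $x$-coordinates differ only in the $(k+1)$-st binary digit, so that the identity $\phi(x)+\phi(x\oplus\tfrac12)=\tfrac12$ of \eqref{e.plushalf} collapses the $\phi$-factor to a constant, while the partners' $y$-coordinates differ by only $2^{-n+k}$ and so contribute an $\mathcal O(1)$ total error; a second, separate computation of $\sum_r(1-v_r)$ over pair representatives then produces the cancellation with the linear part. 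You instead pair $s\leftrightarrow M-1-s$ (reflection of the $x$-coordinate about the center of $I$), which leaves the $\phi$-factor invariant and makes the \emph{other} factor exactly constant on each pair via the identity $a'(s)+a'(M-1-s)=M-1$; this identity does hold, since $M-1-s$ is the bitwise complement of $s$, reversal commutes with complementation, and $\overline{x\oplus\tilde\sigma'}=\overline{x}\oplus\tilde\sigma'$, so $a'(M-1-s)=\overline{a'(s)}=M-1-a'(s)$. Your route has the advantage of yielding an exact closed form for the coefficient (the main terms cancel identically against $N|I|^2/8$ rather than up to $\mathcal O(1)$ bookkeeping), at the cost of a slightly more delicate digit identity; the paper's route isolates all the arithmetic of the scrambling into the single local identity \eqref{e.plushalf} and absorbs everything else into $\mathcal O(1)$ errors. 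Your reduction of the second estimate to the first via the observation that reflecting $\mathcal V_{n,\sigma}$ across the diagonal produces $\mathcal V_{n,\rev_n(\sigma)}$, together with the fact that the first estimate is proved for arbitrary $\sigma$, is also sound and matches the paper's (unelaborated) claim that the first estimate suffices.
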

%%%%%%%%%%%%%%%%%%%%%%%%%%%%%% LEMMA LEMMA LEMMA
%%%%%%%%%%%%%%%%%%%%%%%%%%%%%% LEMMA LEMMA LEMMA

%%%%%%%%%%%%%%%%%%%%%%%%%%%%%% PROOF PROOF PROOF
%%%%%%%%%%%%%%%%%%%%%%%%%%%%%% PROOF PROOF PROOF

\begin{proof}
It suffices to prove just the first estimate in the
statement of the Lemma. %To see that note that roles of
%$x=\frac{\tau}{2^n}$ and
%$y=\rev_n\big({\frac{\tau}{2^n}\oplus\sigma} \big)$ in the
%definition of the van der Corput set can be interchanged.
The proof proceeds in a more or less analogous fashion as the proof
of Lemma \ref{l.haarcoeffs}. We fix a dyadic interval
$I=\left[\frac{i}{2^k},\frac{i+1}{2^k} \right)$ and write
$h_I=h^{0,1} _{I\times[0,1]}$. We need an analogue of formula
\eqref{e.phi} which in this case becomes

\begin{align}\label{e.phix}
\ip \mathbf 1_{[\vec p,\vec 1)} ,h_I,=
\begin{cases}
|I|    \phi (2^kp_x)(1-p_y), \ &p_x\in I,
\\
0, &\textup{otherwise}.
\end{cases}
\end{align}

As in the proof of Lemma \ref{l.haarcoeffs}, we need to consider
separately the case of small volume and large volume rectangles. The
small volume case here is $|I|\leq \frac{2}{N}$. Note that in this
case there are at most $2^{n-k}\leq 2$ points of the van der Corput
set whose $x$ coordinate lies in $I$. Using equation \eqref{e.phix}
we trivially get the desired estimate as in the proof of the
corresponding case of Lemma \ref{l.haarcoeffs}.

We now turn to the main part of the proof, namely the estimate
\begin{equation*}
\abs{\ip D_N, h^{1,0}_{I\times [0,1]}, }\lesssim |I|,
\end{equation*}
when $|I|>\frac{2}{N}$. Instead of the quadruples \eqref{e.quad}, we
now group the points of the van der Corput set with $x$-coordinate
in $I$, into \emph{pairs} \eqref{e.pair} of the form:

\[ \begin{array}{ccc}\tag{P}\label{e.pair}
&0.x_1\ldots x_k \ 0\  x_{k+2} \ldots x_n 1,\\
&0.x_1\ldots x_k \ 1 \ x_{k+2} \ldots x_n 1.
\end{array}\]

If $(u,v)$ is one of the two points in \eqref{e.pair}, we also have the description:
\begin{align*}\tag{P}
\begin{cases}
&(u,v),\\
&(u\oplus 2^{-k-1},v\oplus2^{-n+k}).
\end{cases}
\end{align*}

There are $2^{n-k-1}$ such pairs and let's index them arbitrarily as
$P_r$, $r=1,2,\ldots ,2^{n-k-1}$. We write
\begin{align}
\ip D_N, h_I , = \sum_{p\in\mathcal {V}_{n,\sigma}\cap I \times[0,1] } \ip \textbf{1}_ {[\vec p, \vec 1)} ,h_I,  -\frac{N|I|^2}{8}&=
\sum_{r=1} ^{2^{n-k-1}} \sum _{p\in P_r} \ip \textbf{1}_ {[\vec p, \vec 1)} ,h_I,  -\frac{N|I|^2}{8}.
\end{align}
Now for any pair \eqref{e.pair} we use \eqref{e.phix} to write
\begin{align*}
\sum _{p\in P} \ip \textbf{1}_ {[\vec p, \vec 1)} ,h_I, &= |I|\ \phi(2^ku)(1-v)+|I|\ \phi(2^k(u\oplus2^{-k-1}))(1-v\oplus 2^{-n+k})\\
&=|I|\ \big[ \phi(2^ku)+\phi(2^ku\oplus 2^{-1})\big] \ (1-v)
\\&+|I|\  \phi(2^ku\oplus2^{-1})\ (v-v\oplus2^{-n+k})
\\&=\frac{1}{2}|I|(1-v) + |I|\  \phi(2^ku\oplus2^{-1})\ (v-v\oplus2^{-n+k}).
\end{align*}
where in the last equality we have used \eqref{e.plushalf}. Using
the fact that $\abs{v-v\oplus2^{-n+k}} = 2^{-n+k}$ and assuming
$\d_{n-k} (v)=0$, it is routine to check that
\begin{equation}\label{e.almosthere}
\ip D_N, h_I ,=|I|\ \bigg\{\frac{1}{2} \sum_{r=1} ^{2^{n-k-1}} (1-v_r)-2^{n-k-3}+\mathcal{O}(1)\bigg\},
\end{equation}
where $v_r$ are $y$-coordinates of the form
\begin{align*}
v_r&=0.Y_1\ldots Y_{n-k-1} 0  y_{n-k+1}\ldots y_n1%\\&=0.Y_1\ldots
%Y_{n-k-1}+2^{-n+k}0.y_{n-k}\ldots y_n1.
\end{align*}
%The digit $y_{n-k}$ depends on the wheteher our original point
%$(u,v)$ was in the first or the second half of the interval $I$,
%however it is fixed.
The digits $y_{n-k+1}$ up to $y_n$ are fixed
because of the digit reversal structure of the van der Corput set.
%The point is that the first $n-k-1$ digits of $v_r$ can be
%considered as independent random variables taking values $\{0,1\}$
%with probability $\frac{1}{2}$.
We can then estimate the sum in the previous expression as follows:
\begin{align*}
 \sum_{r=1} ^{2^{n-k-1}} (1-v_r)&= 2^{n-k-1}-\frac{1}{2}2^{n-k-1}\bigg(1 -2^{-n+k+1}\bigg) + \mathcal{O}(1)=2^{n-k-2}+\mathcal{O}(1).
\end{align*}

Substituting in \eqref{e.almosthere} we get
\begin{equation*}
\ip D_N, h_I ,=|I|\ \bigg\{\frac{1}{2}\big(2^{n-k-2}+\mathcal{O}(1)\big) -2^{n-k-3} +\mathcal{O}(1)\bigg\}\lesssim |I|,
\end{equation*}
which completes the proof.
\end{proof}

%%%%%%%%%%%%%%%%%%%%%%%%%%%%%% PROOF PROOF PROOF
%%%%%%%%%%%%%%%%%%%%%%%%%%%%%% PROOF PROOF PROOF

%%%%%%%%%%%%%%%%%%%%%%%%%%%%%% SECTION  SECTION SECTION
%%%%%%%%%%%%%%%%%%%%%%%%%%%%%% SECTION  SECTION SECTION
\section{BMO Estimates for the Discrepancy Function} %\label{s.}

This section is devoted to the proofs of Theorems \ref{t.bmolower}
and \ref{t.bmo}. We recall that the Dyadic Chang-Fefferman $
\operatorname {BMO}_{1,2}$ is defined to consist of those square
integrable functions $ f$ in the linear span of $ \{h_R \mid R\in
\mathcal D ^2 \}$, for which we have
\begin{equation*}
\norm f. \operatorname {BMO}_{1,2} . \coloneqq \sup _{ U\subset
[0,1] ^2 } \Biggl[ \lvert  U\rvert ^{-1} \sum _{\substack{R \in
\mathcal D ^2
\\ R\subset U}}
\frac {\ip f ,h _R , ^2  } {\lvert  R\rvert }
 \Biggr] ^{1/2} < \infty \,.
\end{equation*}
%The supremum is taken over all measurable subsets $ U\subset [0,1]
%^2 $.
We begin with the proof of Theorem \ref{t.bmolower} which is
essentially just a repetition of the argument used in Proposition
\ref{p.rvec}.

%%%%%%%%%%%%%%%%%%%%%%%%%%%%%% PROOF PROOF PROOF
%%%%%%%%%%%%%%%%%%%%%%%%%%%%%% PROOF PROOF PROOF
\begin{proof}[Proof of Theorem \ref{t.bmolower}] We fix a distribution $\mathcal{A}_N$ of $N$ points in the unit square
 and take $n$ such that $2N<2^n\leq 4N$. For the special choice of $U=[0,1]^2$ we have
\begin{equation*}
\norm D_N. \operatorname {BMO}_{1,2} .^2 \geq \sum_{\vec r \in\mathbb{H}_n} \sum_{\substack{ R\in\mathcal{R}_{\vec r}  \\ R\cap \mathcal{A}_N= \emptyset }} \frac{\ip D_N,h_R,^2}{|R|}.
\end{equation*}
Consider a rectangle $R\in \mathcal{R}_{\vec r}$ which does not
contain any points of $\mathcal{A}_N$. Then
\begin{equation*}
\ip D_N,h_R , = - \ip L_N, h_R, = - \frac{|R|^2}{4^2}.
\end{equation*}
As a result,
\begin{equation*}
\norm D_N. \operatorname {BMO}_{1,2} .^2 \gtrsim \sum_{\vec r \in\mathbb{H}_n}\sum_{\substack{ R\in\mathcal{R}_{\vec r}  \\ R\cap \mathcal{A}_N= \emptyset }} N^2 |R|^3\gtrsim \frac{1}{N} \sum_{\vec r \in\mathbb{H}_n}\sharp \{R\in\mathcal R_{\vec r}, \ R\cap \mathcal{A}_N=\emptyset\}.
\end{equation*}
For fixed $\vec r \in \mathbb{H}_n$ we have $\sharp \{R\in\mathcal R_{\vec r}, R\cap \mathcal{A}_N=\emptyset\}\geq N$, arguing as in the proof of Proposition \ref{p.rvec}. Thus we get
\begin{equation*}
\norm D_N. \operatorname {BMO}_{1,2} .^2 \gtrsim \sum_{\vec r \in\mathbb{H}_n} 1\gtrsim n.
\end{equation*}
This completes the proof since $n\simeq \log N$.
\end{proof}
%%%%%%%%%%%%%%%%%%%%%%%%%%%%%% PROOF PROOF PROOF
%%%%%%%%%%%%%%%%%%%%%%%%%%%%%% PROOF PROOF PROOF

We proceed with the proof of the upper bound in Theorem \ref{t.bmo}.
Our extremal set of cardinality $N=2^n$ will be
$\mathcal{V}_{n,\sigma}$ for arbitrary $\sigma\in[0,1)$, as defined
in Definition~\ref{d.vdc}. We will just write $D_N$ for the
Discrepancy function of the digit-scrambled van der Corput set.

%%%%%%%%%%%%%%%%%%%%%%%%%%%%%% PROOF PROOF PROOF
%%%%%%%%%%%%%%%%%%%%%%%%%%%%%% PROOF PROOF PROOF

\begin{proof}[Proof of Theorem \ref{t.bmo}]
We fix a measurable set $ U\subset [0,1] ^2 $ and consider only
rectangles $R$ in the family $ \{R \in \mathcal D ^2,R\subset U\}$.
We will sometimes suppress the fact that our rectangles are
contained in $U$ to simplify the notation.

The are two estimates that are relevant here, one for large
rectangles and one for small volume rectangles. For the large volume
case, $|R|\geq 2^{-n}$, we have
\begin{align*}
\lvert  U\rvert ^{-1} \sum_{|R|\geq 2^{-n}}
\frac {\ip  D_N ,h _R , ^2  } {\lvert  R\rvert } &=  \lvert  U\rvert ^{-1}\sum_{k=0} ^n  \sum_{\vec r \in \mathbb{H}_k} \sum_{R\in \mathcal{R}_{\vec r}} \frac{ \ip D_N ,h _R , ^2 }{|R|}\\
&\lesssim N^{-2}\lvert  U\rvert ^{-1}\sum_{k=0} ^n 2^k \sum_{\vec r
\in \mathbb{H}_k} \sum_{R\in \mathcal{R}_{\vec r}}1,
\end{align*}
where we have used the estimate $\ip D_N ,h _R , \lesssim
\frac{1}{N}$ of Proposition \ref{l.haarcoeffs}. Now observe that for
fixed $k$ and $\vec r \in\mathbb{H}_k$ there are at most $2^k|U|$
rectangles $R\in\mathcal{R}_{\vec r}$  contained in $U$.
Furthermore, there are $k$ choices for the `geometry' $\vec r \in
\mathbb{H}_k$. We thus get
\begin{align*}
\lvert  U\rvert ^{-1} \sum_{|R|\geq 2^{-n}} \frac {\ip  D_N ,h _R ,
^2  } {\lvert  R\rvert } &\lesssim
 N^{-2}\sum_{k=0} ^n  k(2^{k})^2 \lesssim \frac{n(2^n)^2}{N^2}= n.
\end{align*}

In the small volume term we treat the linear and the counting parts
separately.

For the linear part we use \eqref{e.vv} to get $\ip  L_N ,h _R, =4^{-2}N \lvert R \rvert^2$. So we have
\begin{align*}
\lvert  U\rvert ^{-1} \sum_{|R|< 2^{-n}}
\frac {\ip L_N ,h _R , ^2  } {\lvert  R\rvert }& = \lvert  U\rvert ^{-1}\sum_{k=n+1} ^\infty  \sum_{\vec r \in \mathbb{H}_k} \sum_{R\in \mathcal{R}_{\vec r}} \frac{ \ip L_N ,h _R , ^2 }{|R|}\\
&\simeq N^2\lvert  U\rvert ^{-1}\sum_{k=n+1} ^\infty  \sum_{\vec r \in \mathbb{H}_k} (2^{-k})^3\sum_{R\in \mathcal{R}_{\vec r}} 1.
\end{align*}
Now arguing as in the large volume case we have $\sum_{R\in
\mathcal{R}_{\vec r}} 1\lesssim 2^k |U|$, and thus
\begin{align*}
\lvert  U\rvert ^{-1} \sum_{|R|< 2^{-n}}
\frac {\ip L_N ,h _R , ^2  } {\lvert  R\rvert }& \lesssim N^2 \sum_{k=n+1} ^\infty k(2^{-k})^2 \lesssim n.
\end{align*}

It remains to bound the counting part that corresponds to small
volume rectangles, i.e.
\begin{equation*}
\lvert  U\rvert ^{-1} \sum_{|R|< 2^{-n}}
\frac {\ip C_{\mathcal{V}_{n,\sigma}} ,h _R , ^2  } {\lvert  R\rvert }.
\end{equation*}
Let $ \mathcal R $ be the maximal dyadic rectangles $R$ of area at
most $ 2 ^{-n}$, contained inside $ U$, and such that $h_R$ has
non-zero inner product with the counting part. It is essential to
note that
\begin{equation} \label{e.b1}
\sum _{R\in \mathcal R} \lvert  R\rvert \lesssim  n \lvert  U\rvert\,.
\end{equation}
Indeed, for  each rectangle $ R\in \mathcal R$, the function $ h _R
$ is, as we have observed, orthogonal to each $\mathbf 1_{[\vec p,
\vec 1) } $ with $\vec p$ not in the interior of $ R$. Thus, $ R$
must contain one element of the van der Corput set in its interior.
On the other hand $\mathcal{V}_{n,\sigma}$ is a net so $R$ contains
exactly one point. Now look at all the rectangles in
$R\in\mathcal{R}$, $R=R_x\times R_y$, with a fixed side length
$|R_x|$. The length of this side must be at least $2^{-n}$ in order
for the rectangle to contain a point of the van der Corput set in
its interior, so there are at most $n$ choices for $|R_x|$. On the
other hand, the rectangles in $\mathcal{R}$ with the same side
length must be disjoint since they are maximal and dyadic. Since
they are all contained in $U$, their union has volume at most $U$.
Summing over all possible side lengths $|R_x|$ proves \eqref{e.b1}.

Now, we can write
\begin{align*}
 \lvert  U\rvert ^{-1} \sum _{|R|<2^{-n}}
\frac {\ip  C_{\mathcal{V}_{n,\sigma}}  ,h _R , ^2  } {\lvert  R \rvert }  \leq   \lvert  U\rvert ^{-1} \sum_{R\in \mathcal {R}} \sum_{R^\prime \subseteq R} \frac {\ip  C_{\mathcal{V}_{n,\sigma}}  ,h _{R^\prime }, ^2  } {\lvert  R^\prime \rvert }.
\end{align*}
Note that we have inequality instead of equality, since a rectangle
$R$ can be contained in several maximal rectangles. However, this
does not create any problem.

Let $R\in \mathcal{R}$  be fixed and let $\vec{p}_R$ be the unique
point of $\mathcal{V}_{n,\sigma}$ contained in $R$. We can use
Bessel's inequality to bound the inner sum:
\begin{equation}
 \sum_{R^\prime \subseteq R} \frac {\ip  C_{\mathcal{V}_{n,\sigma}}  ,h _{R^\prime }, ^2  }
 {\lvert  R^\prime \rvert }\leq \norm  \mathbf{1}_{(\vec{p}_R, \vec{1}]} . L^2(R).^2 \leq  |R| \,
 .
\end{equation}
 Thus, by \eqref{e.b1}
\begin{align*}
 \lvert  U\rvert ^{-1}  \sum _{|R|<2^{-n}}  \frac {\ip  C_{\mathcal{V}_{n,\sigma}}  ,h _{R }, ^2  } {\lvert  R \rvert } \lesssim
  \lvert  U\rvert ^{-1} \sum_{R\in \mathcal{R}} \lvert R \vert \lesssim
  n.
\end{align*}

The proof is finished, since we have shown that for any measurable
set $U\subset [0,1]^2$
\begin{align*}
 \Bigg( \lvert  U\rvert ^{-1} \sum _{\substack{R \in \mathcal D ^2
\\ R\subset U}}
\frac {\ip D_N ,h _R , ^2  } {\lvert  R\rvert } \Bigg)^\frac12
\lesssim n^\frac12 \simeq \sqrt{\log N}.
\end{align*}
\end{proof}
%%%%%%%%%%%%%%%%%%%%%%%%%%%%%% PROOF PROOF PROOF
%%%%%%%%%%%%%%%%%%%%%%%%%%%%%% PROOF PROOF PROOF

%%%%%%%%%%%%%%%%%%%%%%%%%%%%%% SECTION SECTION SECTION
%%%%%%%%%%%%%%%%%%%%%%%%%%%%%% SECTION SECTION SECTION

\section{The $\textnormal{exp}(L^\alpha)$ Estimates for the Discrepancy Function.}

\subsection{Lower bound: The Proof of Theorem~\ref{t.lower}} %\label{s.}

The proof is by way of duality and is very similar to Hal{\'a}sz's proof
 \cite{MR637361} of Schmidt's Theorem, see  \eqref{e.schmidt}. Fix the
point distribution $ \mathcal A_N \subset [0,1] ^2 $. Set $ 2N<2
^{n} \le 4N$, so that $ n \simeq \log N$. Proposition~\ref{p.rvec}
provides us with $ \mathsf  r$ functions $ f _{\vec r} $ for $\vec
r\in \mathbb H _n ^{2}$. Let $ \mathbb G _N ^{2} \subset \mathbb H
_N ^{2}$ be those elements of $ \mathbb H _N ^{2}$ whose first
coordinate is a multiple of a sufficiently large integer $ a$. We
construct the following  functions:
\begin{equation} \label{e.Psi}
\Psi \coloneqq \prod _{\vec r \in \mathbb G _N ^2 } (1+ f _{\vec
r}), \qquad \quad \widetilde \Psi \coloneqq \Psi -1 .
\end{equation}
The `product rule' \ref{p.productRule} easily implies that $ \Psi $
is a positive function of $L^1$ norm one. In fact, letting $
g=\sharp \, \mathbb G _n  ^2 $, it is clear that
\begin{equation*}
\Psi = 2 ^{g} \mathbf 1_{E}\,, \qquad \mathbb P (E)= 2 ^{-g}\,.
\end{equation*}
Therefore, by Proposition~\ref{p.indicator},
\begin{equation*}
\norm \widetilde \Psi . L (\log L) ^{1/\alpha }.  \simeq g
^{1/\alpha } \simeq n ^{1/\alpha }\,.
\end{equation*}
The fact that $\ip D_N, \widetilde \Psi, \gtrsim n$ is well-known
\cite{MR637361}, \cite{MR1697825}. In fact, if we expand
\begin{align*}
\widetilde \Psi &= \sum _{k=1} ^{g} \Psi _{k} \,,
\\
\Psi _k & = \sum _{ \{\vec r_1 ,\dotsc, \vec r_k\} \subset \mathbb G
_n ^2 } \prod _{\ell =1} ^{k} f _{\vec r_\ell }\, ,
\end{align*}
then, using the `product rule' \ref{p.productRule}, it is not hard
to see that we have
\begin{equation}\label{e.a}
\ip D_N , \Psi _1 , \gtrsim  g \gtrsim \frac n a \, ,
\end{equation}
and  the other, higher order terms can be summed up, using
Propositions \ref{p.>n} and \ref{p.Not2Many}, to give a much smaller
estimate for $ a$ sufficiently large.

Thus, we can estimate
\begin{equation*}
n \lesssim  \ip D_N, \widetilde \Psi , \lesssim \norm D_N .
\operatorname {exp} (L ^{\alpha }). \cdot n ^{1/\alpha }\, ,
\end{equation*}
and so Theorem~\ref{t.lower} holds.

\subsection{Upper bound: The Proof of Theorem \ref{t.vdc} in the case that $ N=2 ^{n}$.}

In this section we shall obtain the upper bound of the
$\operatorname{exp} (L^2)$ norm of the discrepancy of the
digit-scrambled van der Corput set. We shall consider the case of $ N= 2 ^{n}$, leaving the 
general case to later. Lemma \ref{l.integral} tells us
that we should choose  $\mathcal V_{n,\sigma}$ with half the digits
`scrambled', i.e. $\sum_{i=1}^n \d_i (\sigma) = \lfloor n/2 \rfloor$
-- this will be the only restriction on $\sigma$ and for simplicity
we shall assume that $n$ is even.
%In view of Proposition \ref{p.comparable}, it would
%suffice to esttimate the ...
We expand $D_N$ in the Haar series and  break the expansion into
several parts (in view of our choice of $\sigma$, $h^{1,1}$ does not
play a role in the expansion):
\begin{align}
\nonumber D_N & = \sum_{R\in \mathcal D^2} \frac{\ip D_N, h_R,}{|R|}h_R
 + \sum_{R=I \times [0,1]} \frac{\ip D_N, h^{0,1}_R,}{|R|}h^{0,1}_R
 + \sum_{R= [0,1]\times I} \frac{\ip D_N, h^{1,0}_R,}{|R|}h^{1,0}_R\\
\label{e.hexp}& = \sum_{R:|R|> 2^{-n}} \frac{\ip D_N, h_R,}{|R|}h_R
+ \sum_{R:|R|\le 2^{-n}} \frac{\ip C_N, h_R,}{|R|}h_R  -
\sum_{R:|R|\le 2^{-n}} \frac{\ip L_N, h_R ,}{|R|}h_R\\
\label{e.hexp1}& \,\,\, + \sum_{R=I \times [0,1]} \frac{\ip D_N,
h^{0,1}_R,}{|R|}h^{0,1}_R
 + \sum_{R= [0,1]\times I} \frac{\ip D_N, h^{1,0}_R,}{|R|}h^{1,0}_R
\end{align}

For the first sum in the expansion \eqref{e.hexp} above we have:
\begin{align*}
\nonumber \NOrm \sum_{R:|R|> 2^{-n}} \frac{\ip D_N, h_R,}{|R|}h_R .
\operatorname{exp}(L^2). & \le \sum_{k=0}^{n-1} \NORm \sum_{R:|R|=
2^{-k}} \frac{\ip D_N, h_R,}{|R|}h_R . \operatorname{exp}(L^2).\\
\nonumber &\lesssim \sum_{k=0}^{n-1} \NORM \Bigg( \sum_{R:|R|=
2^{-k}} \frac{\ip D_N,
h_R, ^2}{|R|^2} {\mathbf 1}_R \Bigg)^{\frac12}.\infty.\\
 & \lesssim \sum_{k=0}^{n-1} \frac1{N} \cdot
\sqrt{k+1} \cdot 2^k \approx \sqrt{n},
\end{align*}
where we have used the hyperbolic version of the Chang-Wilson-Wolff
inequality (Theorem \ref{t.one}), the estimate of the Haar
coefficients of $D_N$ (Lemma \ref{l.haarcoeffs}), and the fact that
each point in $[0,1]^2$ lives in $k+1$ dyadic rectangles of volume
$2^{-k}$.

The last sum in \eqref{e.hexp} is easy to estimate. Since $\ip L_N,
h_R, = 4^{-d} N |R|^2$, we have:
\begin{align}
\nonumber \NOrm \sum_{R:|R|\le 2^{-n}} \frac{\ip L_N, h_R,}{|R|}h_R
. \operatorname{exp}(L^2). & \le 4^{-d} \sum_{k=n}^{\infty} \NOrm
\sum_{R:|R|=
2^{-k}} N 2^{-k} h_R . \operatorname{exp}(L^2).\\
\nonumber &\lesssim N \sum_{k=n}^\infty 2^{-k} \NORm \bigg(
\sum_{R:|R|= 2^{-k}}  {\mathbf 1}_R \bigg)^{\frac12}.\infty.\\
\label{e.exp3} & \lesssim N \sum_{k=n}^{\infty} \sqrt{k+1} \cdot
2^{-k} \approx \sqrt{n},
\end{align}
where we have once again applied  Theorem \ref{t.one}.

The second sum in \eqref{e.hexp} is the hardest. We consider
rectangles $R$ of volume $|R|\le 2^{-n}$. Recall that, in order for
$\ip C_N, h_R,$ to be non-zero, $R$ must contain points of $\mathcal
V_{n,\sigma}$ in the interior. The structure of the van der Corput
set then implies that we must at least have $|R_1|,|R_2| \ge
2^{-n}$. For each such rectangle $R$, one can find a unique
`parent': a dyadic rectangle $\widetilde{R} \subset [0,1]^2$ with
$|\widetilde{R}|= 2^{-n}$, $\widetilde{R}_1=R_1$, and $R\subset
\widetilde{R}$. We can now write
\begin{align}\label{e.e1}
\NOrm \sum_{R: |R|< 2^{-n}} \frac{\ip C_N, h_R,}{|R|} h_R . p. =
\NORM \sum_{k=0}^{n} \, \sum_{\substack{\widetilde{R}:\,
|\widetilde{R}|=2^{-n}\\ |\widetilde{R}_1|=2^{-k}}} \,\sum_{\substack{R\subset
\widetilde{R}\\ R_1=\widetilde{R}_1}} \frac{\ip C_N, h_R,}{|R|} h_R.
p.
\end{align}
A given rectangle $\widetilde{R}$ as above contains precisely one
point $(p_1,p_2)$ from the set $\mathcal V_{n,\sigma}$. Thus,
\begin{align}\label{e.e2}
\sum_{\substack{R\subset \widetilde{R}\\ R_1=\widetilde{R}_1}} \frac{\ip C_N,
h_R,}{|R|} \, h_R (x_1, x_2) =  C_{\widetilde{R}}(x_2) \frac{\ip
h_{\widetilde{R}_1}, \mathbf 1_{[p_1,1]},}{|\widetilde{R}_1|} \,
h_{\widetilde{R}_1} (x_1),
\end{align}
where
\begin{align}\label{e.e3}
C_{\widetilde{R}}(x_2) =\begin{cases} \sum_{I\subset \widetilde{R}_2} \frac{\ip
h_{I}, \mathbf 1_{[p_2,1]},}{|I|} \, h_{I} (x_2)
\nonumber  = \mathbf 1_{[p_2,1]} (x_2) - \int_{\widetilde{R}_2}
\mathbf 1_{[p_2,1]} (x) dx / |\widetilde{R}_2|,\ x_2\in \widetilde{R}_2,\\ \\
0, \ 	x_2 \not\in \widetilde{R}_2.
\end{cases}
\end{align}
In any case, we have $|C_{\widetilde{R}}(x_2)|\le 2$. Now we fix
$x_2 \in [0,1]$. For fixed $x_2$ and $\widetilde{R}_1$, there is a
unique $\widetilde{R}$ such that the sum in \eqref{e.e2} is
non-zero. Thus, using \eqref{e.e1}
\begin{align*}
\sum_{R: |R|\ge 2^{-n}} \frac{\ip C_N, h_R,}{|R|} \, h_R (x_1, x_2)&= \sum_{k=0}^n \,\, \sum_{\widetilde{R}_1:\,
|\widetilde{R}_1|=2^{-k}} \frac{C_{\widetilde{R}}(x_2) \ip
h_{\widetilde{R}_1}, \mathbf
1_{[p_1,1]},}{|\widetilde{R}_1|} \, h_{\widetilde{R}_1} (x_1)\\
&=\sum_{k=0}^n \,\, \sum_{\widetilde{R}_1:\,
|\widetilde{R}_1|=2^{-k}}
\frac{\alpha_{\widetilde{R}_1}(x_2)}{|\widetilde{R}_1|} \,
h_{\widetilde{R}_1} (x_1),
\end{align*}
where the Haar coefficient $\alpha_{\widetilde{R}_1}(x_2)$ satisfies
$|\alpha_{\widetilde{R}_1}(x_2)|\lesssim |\widetilde{R}_1|$. Next,
we apply the one-dimensional Littlewood-Paley inequality in the
variable $x_1$:
\begin{align*}
\NOrm \sum_{R: |R|\ge 2^{-n}} \frac{\ip C_N, h_R,}{|R|} h_R .
L^p(x_1). \lesssim p^\frac12 \NORM \, \Bigg(
\sum_{\widetilde{R}_1:\, |\widetilde{R}_1|\ge 2^{-n}}
\frac{|\alpha_{\widetilde{R}_1}(x_2)|^2}{|\widetilde{R}_1|^2} \,
\mathbf 1_{\widetilde{R}_1} \Bigg)^\frac12 .L^p (x_1). \le
p^\frac12 n^\frac12 .
\end{align*}
We now integrate this estimate in $x_2$ to obtain
\begin{align*}
\NOrm \sum_{R: |R|\ge 2^{-n}} \frac{\ip C_N, h_R,}{|R|} h_R . p.
\lesssim p^\frac12 n^\frac12,
\end{align*}
and thus
\begin{align*}
\NOrm \sum_{R: |R|\ge 2^{-n}} \frac{\ip C_N, h_R,}{|R|} h_R .
\operatorname{exp}(L^2). \lesssim  n^\frac12,
\end{align*}
in view of Proposition \ref{p.comparable}. Thus, we have estimated
the $\operatorname{exp}(L^2)$ norms of all the terms in
\eqref{e.hexp} by $n^\frac12$. The estimates for $(0,1)$ and $(1,0)$
Haars in \eqref{e.hexp1} can be easily incorporated, invoking
similar one-dimensional arguments and  Lemma \ref{l.semihaarcoeffs}.
We skip these computations for the sake of brevity. We thus arrive
to
\begin{equation}
\Norm D_N. \operatorname{exp}(L^2). \lesssim \sqrt{n} \approx
\sqrt{\log N}.
\end{equation}
Proposition \ref{p.AA} and inequality \eqref{e.schmidtsharp} finish
the proof of Theorem \ref{t.vdc} for all $\alpha \ge 2$.

%%%%%%%%%%%%%%%%%%%%%%%%%%%%%% SUBSECTION SUBSECTION SUBSECTION SUBSECTION
 %%%%%%%%%%%%%%%%%%%%%%%%%%%%%% SUBSECTION SUBSECTION SUBSECTION SUBSECTION 
\subsection{Upper bound: The Proof of Theorem \ref{t.vdc} in the General Case.}%\label{ss.}

We use a standard argument to generalize the previous proof to the case of arbitrary $ N$. 
Fix $ 2 ^{n-1}< N < N' \coloneqq 2 ^{n}$.  Set $ \tfrac 12 <t=N 2 ^{-n} + 2 ^{-n-1}<1$.  Consider the  following function 
\begin{equation*}
\Delta _{N}  (x_1, x_2) \coloneqq  D_{N'} (t x _1, x_2)-\tfrac 12 x_1 \cdot x_2\,, \qquad (x_1,x_2)\in [0,1] ^2  \,.  
\end{equation*}
Here, $ D _{N'}$ is the Discrepancy Function of  a shifted 
van der Corput set $ \mathcal V _{n, \sigma }$.    (The `$ -\tfrac 12 x_1 \cdot x_2$' above arises from the 
precise definition of the van der Corput set.)

The observation is that $ \Delta _N$ is in fact the Discrepancy Function of the set of points 
$  \{ v _{n,\sigma } (\tau ) \;:\; \tau = 0, 1 ,\dotsc, N\}$, where this notation is given in Definition~\ref{d.vdc}. 
For the linear part of the Discrepancy Function, note that 
\begin{equation*}
N' (t x_1) \cdot x_2 -\tfrac 12 x_1 \cdot x_2= N x_1 \cdot x_2 \,. 
\end{equation*}
And for the counting part, note that $ \mathbf 1_{ [ v _{n,\sigma } (\tau ),1) } (t x_1, x_2)$, 
restricted to $ [0,1] ^2 $ will be the indicator of a rectangle with one corner anchored at the upper right 
hand corner. Moreover, it will 
will be 
identically zero on $ [0,1] ^2 $ iff $ N<\tau \le N'$. Thus, 
$ \Delta _{N}   $ is a Discrepancy Function.

So it suffices for us to estimate the $ \operatorname {exp}(L ^{\alpha })$ norm of $ \Delta _N$.  
But this is straight forward.  
\begin{align*}
\norm \Delta _N . \operatorname {exp}(L ^{\alpha }).  
& \le  1+ \norm D _{N'} (t x_1, x_2)  . \operatorname {exp}(L ^{\alpha }).  
\\
& \le 1 + t ^{-1} \norm D _{N'} ( x_1, x_2)  . \operatorname {exp}(L ^{\alpha }). 
 \lesssim (\log N) ^{1/\alpha }\,, \qquad 2\le \alpha < \infty \,. 
\end{align*}

%%%%%%%%%%%%%%%%%%%%%%%%%%%%%% REMARK REMARK REMARK
\begin{remark}\label{r.bmo} We make a final remark on the other upper bound of the dyadic $ BMO$ estimate 
of the digit-scrambled van der Corput set in Theorem~\ref{t.bmo}.  It is natural to guess that this 
estimate should hold for all $ N$, and for $ BMO$.  A natural way to prove this is via the 
approach developed in \cites{MR2400405,809.3288}, but carrying out this argument is not completely straight forward.

\end{remark}
%%%%%%%%%%%%%%%%%%%%%%%%%%%%%% REMARK REMARK REMARK

 \begin{bibsection}
 \begin{biblist}

\bib{MR1032337}{article}{
    author={Beck, J{\'o}zsef},
     title={A two-dimensional van Aardenne-Ehrenfest theorem in
            irregularities of distribution},
   journal={Compositio Math.},
    volume={72},
      date={1989},
    number={3},
     pages={269\ndash 339},
      issn={0010-437X},
    review={MR1032337 (91f:11054)},
}

\bib{MR903025}{book}{
    author={Beck, J{\'o}zsef},
    author={Chen, William W. L.},
     title={Irregularities of distribution},
    series={Cambridge Tracts in Mathematics},
    volume={89},
 publisher={Cambridge University Press},
     place={Cambridge},
      date={1987},
     pages={xiv+294},
      isbn={0-521-30792-9},
    review={MR903025 (88m:11061)},
}

% \bib{MR539351}{article}{
%    author={Bernard, Alain},
%    title={Espaces $H\sp{1}$ de martingales \`a deux indices. Dualit\'e avec
%    les martingales de type ``BMO''},
%    language={French, with English summary},
%    journal={Bull. Sci. Math. (2)},
%    volume={103},
%    date={1979},
%    number={3},
%    pages={297--303},
%    issn={0007-4497},
%    review={\MR{539351 (82d:60092)}},
% }

\bib{math.CA/0609815}{article}{
   author={Bilyk, Dmitriy},
   author={Lacey, Michael T.},
   title={On the small ball inequality in three dimensions},
   journal={Duke Math. J.},
   volume={143},
   date={2008},
   number={1},
   pages={81--115},
   issn={0012-7094},
   review={\MR{2414745}},
   eprint={	arXiv:math.CA/060981d},
}

\bib{0705.4619}{article}{
   author={Bilyk, Dmitriy},
   author={Lacey, Michael T.},
   author={Vagharshakyan, Armen},
   title={On the small ball inequality in all dimensions},
   journal={J. Funct. Anal.},
   volume={254},
   date={2008},
   number={9},
   pages={2470--2502},
   issn={0022-1236},
   review={\MR{2409170}},
   eprint={arXiv:0705.4619},
}

\bib{MR584078}{article}{
   author={Chang, Sun-Yung A.},
   author={Fefferman, Robert},
   title={A continuous version of duality of $H\sp{1}$ with BMO on the
   bidisc},
   journal={Ann. of Math. (2)},
   volume={112},
   date={1980},
   number={1},
   pages={179--201},
   issn={0003-486X},
   review={\MR{584078 (82a:32009)}},
}

\bib{cf1}{article}{
    author={Chang, Sun-Yung A.},
    author={Fefferman, Robert},
     title={Some recent developments in Fourier analysis and $H\sp p$-theory
            on product domains},
   journal={Bull. Amer. Math. Soc. (N.S.)},
    volume={12},
      date={1985},
    number={1},
     pages={1\ndash 43},
      issn={0273-0979},
    review={MR 86g:42038},
}

\bib{MR800004}{article}{
    author={Chang, S.-Y. A.},
    author={Wilson, J. M.},
    author={Wolff, T. H.},
     title={Some weighted norm inequalities concerning the Schr\"odinger
            operators},
   journal={Comment. Math. Helv.},
    volume={60},
      date={1985},
    number={2},
     pages={217\ndash 246},
      issn={0010-2571},
    review={MR800004 (87d:42027)},
}

\bib{MR610701}{article}{
   author={Chen, W. W. L.},
   title={On irregularities of distribution},
   journal={Mathematika},
   volume={27},
   date={1980},
   number={2},
   pages={153--170 (1981)},
   issn={0025-5793},
   review={\MR{610701 (82i:10044)}},
}

\bib{MR711520}{article}{
   author={Chen, W. W. L.},
   title={On irregularities of distribution. II},
   journal={Quart. J. Math. Oxford Ser. (2)},
   volume={34},
   date={1983},
   number={135},
   pages={257--279},
   issn={0033-5606},
   review={\MR{711520 (85c:11065)}},
}

\bib{Cor35}{article}{
    author={van der Corput, J. G.},
     title={Verteilungsfunktionen I},
   journal={Akad. Wetensch. Amdterdam, Proc.},
    volume={38},
      date={1935},
     pages={813\ndash 821},

}

 \bib{2000b:60195}{article}{
    author={Dunker, Thomas},
    author={K{\"u}hn, Thomas},
    author={Lifshits, Mikhail},
    author={Linde, Werner},
     title={Metric entropy of the integration operator and small ball
            probabilities for the Brownian sheet},
  language={English, with English and French summaries},
   journal={C. R. Acad. Sci. Paris S\'er. I Math.},
    volume={326},
      date={1998},
    number={3},
     pages={347\ndash 352},
      issn={0764-4442},
    review={MR2000b:60195},
}

% \bib{atomic}{article}{
%    author={Fefferman, R.},
%    title={The atomic decomposition of $H\sp 1$ in product spaces},
%    journal={Adv. in Math.},
%    volume={55},
%    date={1985},
%    number={1},
%    pages={90--100},
%    issn={0001-8708},
%    review={\MR{772072 (86e:42034)}},
% }

\bib{MR1439553}{article}{
    author={Fefferman, R.},
    author={Pipher, J.},
     title={Multiparameter operators and sharp weighted inequalities},
   journal={Amer. J. Math.},
    volume={119},
      date={1997},
    number={2},
     pages={337\ndash 369},
      issn={0002-9327},
    review={MR1439553 (98b:42027)},
}

\bib{MR637361}{article}{
   author={Hal{\'a}sz, G.},
   title={On Roth's method in the theory of irregularities of point
   distributions},
   conference={
      title={Recent progress in analytic number theory, Vol. 2},
      address={Durham},
      date={1979},
   },
   book={
      publisher={Academic Press},
      place={London},
   },
   date={1981},
   pages={79--94},
   review={\MR{637361 (83e:10072)}},
}

\bib {HAZ}{article}{
AUTHOR = {Halton, J. H.},
author={ Zaremba, S. K.},
     TITLE = {The extreme and {$L\sp{2}$} discrepancies of some plane sets},
   JOURNAL = {Monatsh. Math.},
    VOLUME = {73},
      YEAR = {1969},
     PAGES = {316--328},
   MRCLASS = {10.33},
  MRNUMBER = {MR0252329 (40 \#5550)},
MRREVIEWER = {O. P. Stackelberg},
}

\bib {KrPil}{article}{
AUTHOR = {Kritzer, P.}, author={ Pillichshammer, F.},
     TITLE = {An exact formula for the $L_2$ discrepancy of the shifted Hammersley point set},
   JOURNAL = {Uniform Distribution Theory},
    VOLUME = {1},
      YEAR = {2006},
    number = {1}
     PAGES = {1-13},
      }

  \bib{math.NT/0609817}{article}{
    title={{On the Discrepancy Function in Arbitrary Dimension, Close to
        $ L ^{1}$}},
    author={Lacey, Michael T },
    eprint={arXiv:math.NT/060981d},
    journal={to appear in Analysis Mathematica},
    date={2006}
}

\bib{MR0500056}{book}{
   author={Lindenstrauss, Joram},
   author={Tzafriri, Lior},
   title={Classical Banach spaces. I},
   note={Sequence spaces;
   Ergebnisse der Mathematik und ihrer Grenzgebiete, Vol. 92},
   publisher={Springer-Verlag},
   place={Berlin},
   date={1977},
   pages={xiii+188},
   isbn={3-540-08072-4},
   review={\MR{0500056 (58 \#17766)}},
}

\bib{MR1697825}{book}{
   author={Matou{\v{s}}ek, Ji{\v{r}}\'\i},
   title={Geometric discrepancy},
   series={Algorithms and Combinatorics},
   volume={18},
   note={An illustrated guide},
   publisher={Springer-Verlag},
   place={Berlin},
   date={1999},
   pages={xii+288},
   isbn={3-540-65528-X},
   review={\MR{1697825 (2001a:11135)}},
}

\bib{MR850744}{article}{
    author={Pipher, Jill},
     title={Bounded double square functions},
  language={English, with French summary},
   journal={Ann. Inst. Fourier (Grenoble)},
    volume={36},
      date={1986},
    number={2},
     pages={69\ndash 82},
      issn={0373-0956},
    review={MR850744 (88h:42021)},
}

\bib{MR2400405}{article}{
   author={Pipher, Jill},
   author={Ward, Lesley A.},
   title={BMO from dyadic BMO on the bidisc},
   journal={J. Lond. Math. Soc. (2)},
   volume={77},
   date={2008},
   number={2},
   pages={524--544},
   issn={0024-6107},
   review={\MR{2400405}},
}

\bib{MR0066435}{article}{
   author={Roth, K. F.},
   title={On irregularities of distribution},
   journal={Mathematika},
   volume={1},
   date={1954},
   pages={73--79},
   issn={0025-5793},
   review={\MR{0066435 (16,575c)}},
}

\bib{MR0319933}{article}{
   author={Schmidt, Wolfgang M.},
   title={Irregularities of distribution. VII},
   journal={Acta Arith.},
   volume={21},
   date={1972},
   pages={45--50},
   issn={0065-1036},
   review={\MR{0319933 (47 \#8474)}},
}

\bib{MR554923}{book}{
   author={Schmidt, Wolfgang M.},
   title={Lectures on irregularities of distribution},
   series={Tata Institute of Fundamental Research Lectures on Mathematics
   and Physics},
   volume={56},
   publisher={Tata Institute of Fundamental Research},
   place={Bombay},
   date={1977},
   pages={v+128},
   review={\MR{554923 (81d:10047)}},
}

\bib{MR0491574}{article}{
   author={Schmidt, Wolfgang M.},
   title={Irregularities of distribution. X},
   conference={
      title={Number theory and algebra},
   },
   book={
      publisher={Academic Press},
      place={New York},
   },
   date={1977},
   pages={311--329},
   review={\MR{0491574 (58 \#10803)}},
}

\bib{MR95k:60049}{article}{
    author={Talagrand, Michel},
     title={The small ball problem for the Brownian sheet},
   journal={Ann. Probab.},
    volume={22},
      date={1994},
    number={3},
     pages={1331\ndash 1354},
      issn={0091-1798},
    review={MR 95k:60049},
}

\bib{MR96c:41052}{article}{
    author={Temlyakov, V. N.},
     title={An inequality for trigonometric polynomials and its application
            for estimating the entropy numbers},
   journal={J. Complexity},
    volume={11},
      date={1995},
    number={2},
     pages={293\ndash 307},
      issn={0885-064X},
    review={MR 96c:41052},
}

\bib{809.3288}{article}{
  author={Sergei Treil},
  title={$H^1$ and dyadic $H^1$},
  date={2008},
  eprint={http://arxiv.org/abs/0809.3288},
}

\bib{MR1018577}{article}{
   author={Wang, Gang},
   title={Sharp square-function inequalities for conditionally symmetric
   martingales},
   journal={Trans. Amer. Math. Soc.},
   volume={328},
   date={1991},
   number={1},
   pages={393--419},
   issn={0002-9947},
   review={\MR{1018577 (92c:60067)}},
}

  \end{biblist}
 \end{bibsection}

 \end{document}